\def\l@subsection{\@tocline{2}{0pt}{2.5pc}{5pc}{}}
\renewcommand\tocchapter[3]{%
  \indentlabel{\@ifnotempty{#2}{\ignorespaces#2.\quad}}#3%
}
\newcommand\@dotsep{4.5}
\def\@tocline#1#2#3#4#5#6#7{\relax
  \ifnum #1>\c@tocdepth 
  \else
    \par \addpenalty\@secpenalty\addvspace{#2}%
    \begingroup \hyphenpenalty\@M
    \@ifempty{#4}{%
      \@tempdima\csname r@tocindent\number#1\endcsname\relax
    }{%
      \@tempdima#4\relax
    }%
    \parindent\z@ \leftskip#3\relax \advance\leftskip\@tempdima\relax
    \rightskip\@pnumwidth plus1em \parfillskip-\@pnumwidth
    #5\leavevmode\hskip-\@tempdima{#6}\nobreak
    \leaders\hbox{$\m@th\mkern \@dotsep mu\hbox{.}\mkern \@dotsep mu$}\hfill
    \nobreak
    \hbox to\@pnumwidth{\@tocpagenum{#7}}\par
    \nobreak
    \endgroup
  \fi}
\renewcommand\csname r@tocindent0\endcsname{0pt}
\def\l@subsection{\@tocline{2}{0pt}{2.5pc}{5pc}{}}
\newtheorem{thm}{Theorem}[section]
\newtheorem{lemma}[thm]{Lemma}
\newtheorem{proposition}[thm]{Proposition}
\newtheorem{definition}[thm]{Definition}
\newtheorem{corollary}[thm]{Corollary}
\newtheorem{question}[thm]{Question}
\newtheorem*{maintheorem*}{Main Theorem}
\newtheorem*{theorem*}{Theorem}
\newtheorem*{corollary*}{Corollary}
\newcommand{\p}{\mathbb{P}}
\newcommand{\q}{\mathbb{Q}}
\newcommand{\con}{\ \widehat{} \ }
\newcommand{\dom}{\mathrm{dom}}
\newcommand{\h}{\mathrm{ht}}
\newcommand{\res}{\upharpoonright}
\newcommand{\Lim}{\text{Lim}}
\newcommand{\lem}{(\le \! m)}
\newcommand{\len}{(\le \! n)}
\newcommand{\les}{(\le \! s)}
\newcommand{\Imm}{\mathrm{Imm}}
\newcommand{\up}{{\uparrow}}
\begin{document}

\title[Some Results on Finitely Splitting Subtrees of Aronszajn Trees]{Some Results on Finitely Splitting Subtrees of \\ Aronszajn Trees}

\author{John Krueger}

\address{John Krueger, Department of Mathematics, 
	University of North Texas,
	1155 Union Circle \#311430,
	Denton, TX 76203, USA}
\email{john.krueger@unt.edu}

\date{August 18, 2024; revised June 7, 2025}

\subjclass{Primary 03E05, 03E35, 03E40; Secondary 54A35, 54B10.}

\keywords{Aronszajn tree, subtree, Lindel\"{o}f, generalized promise}

\begin{abstract}
	For any $2 \le n < \omega$, 
	we introduce a forcing poset using generalized promises 
	which adds a normal $n$-splitting subtree to a 
	$(\ge \! n)$-splitting normal Aronszajn tree. 
	Using this forcing poset, we prove several consistency results concerning 
	finitely splitting subtrees of Aronszajn trees. 
	For example, it is consistent that there exists an infinitely splitting Suslin tree whose 
	topological square is not Lindel\"{o}f, which solves an open problem due to Marun. 
	For any $2 < n < \omega$, it is consistent that every $(\ge \! n)$-splitting normal 
	Aronszajn tree contains a normal $n$-splitting subtree, but there exists a normal 
	infinitely splitting Aronszajn tree which contains no $(< \! n)$-splitting subtree. 
	To show the latter consistency result, we prove a forcing iteration preservation 
	theorem related to not adding new small-splitting subtrees of Aronszajn trees.	
\end{abstract}

\maketitle

\tableofcontents

\section{Introduction}

In this article we study finitely splitting subtrees of Aronszajn trees. 
We prove consistency results related to the issue of the existence or non-existence 
of uncountable downwards closed subtrees of Aronszajn trees whose elements all have 
some prescribed number of immediate successors. 
This topic has deep connections with fundamental notions in the theory of trees, 
such as the property of being Suslin, as well as having significance to topology. 
For example, recently Marun \cite{marun} has shown that an $\omega_1$-tree, 
equipped with a natural topology called the 
fine wedge topology, 
is Lindel\"{o}f if and only it does not contain an 
uncountable downwards closed finitely splitting subtree. 

We introduce a forcing poset for adding an uncountable downwards closed normal subtree of a 
given Aronszajn tree with some specified arity of its elements using the 
technique of promises. 
The idea of a promise was introduced by Shelah 
\cite[Chapter V, Section 6]{properimproper} and incorporated in his forcing for 
specializing an Aronzajn tree without adding reals. 
Roughly speaking, a promise is an uncountable 
subtree of a derived tree of an Aronszajn tree which can be used in a forcing 
notion to put constraints on the growth of the working part of a condition. 
While introduced explicitly by Shelah, a version of promises appeared earlier 
within Jensen's proof of the consistency of Suslin's hypothesis 
with the continuum hypothesis (\cite[page 108]{devlinj}). 
More recently, the use of promises to force subtrees of Aronszajn trees appears in work of 
Lamei Ramandi and Moore (\cite{mooreminimal}, \cite{mooreramandi}). 
Our forcing for adding a subtree 
uses a generalized version of promises which is similar to that introduced by 
Abraham-Shelah \cite[Definition 4.1]{AS2} to specialize an Aronszajn tree while 
preserving Suslin trees.

We now summarize the main results of the article. 
Let $2 \le n < \omega$ and assume that $T$ is a normal Aronszajn tree such that every element 
of $T$ has at least $n$-many immediate successors. 
We prove that there exists a forcing poset which adds an uncountable downwards closed 
normal subtree of $T$ such that every element of the subtree 
has exactly $n$-many immediate successors in the subtree. 
This forcing poset has several strong properties: it is totally proper, 
$\omega_2$-c.c.\ assuming \textsf{CH}, and can be iterated with countable support 
without adding reals. 
The forcing preserves Aronszajn trees, and assuming that $T$ does not contain a Suslin subtree, 
it preserves Suslin trees. 
In addition, for any $2 \le m \le n$, 
if $S$ is any Aronszajn tree which does not contain an uncountable downwards closed 
subtree whose elements have fewer than $m$-many immediate successors in the subtree, then 
our forcing preserves this property of $S$.

Using this forcing poset we prove several consistency results, two of 
which we highlight here. 
Solving an open problem due to Marun, 
we show that it is consistent that there exists an infinitely splitting normal $\omega_1$-tree 
$S$ which is Suslin, and hence Lindel\"{o}f in the fine wedge topology, but the 
topological square $S \times S$ is not Lindel\"{o}f. 
We prove that for all $2 < n < \omega$, it is consistent that any normal Aronszajn tree 
satisfying that every element has at least $n$-many immediate successors contains an 
uncountable downwards closed normal $n$-ary subtree, yet there exists a normal infinitely splitting 
Aronszajn tree which contains no uncountable downwards closed subtree for which every 
element has fewer than $n$-many immediate successors. 
In order to show the last consistency result, we prove a new preservation theorem 
which say that any countable support forcing iteration of forcings which are proper and 
do not add small-splitting subtrees of Aronszajn trees also does not add 
such subtrees.

\bigskip

\emph{Background.} We assume that the reader is familiar with $\omega_1$-trees, proper forcing, 
and iterated forcing. 
We describe our notation and some basic results about trees which we need.

Let $T$ be an $\omega_1$-tree, which means a tree of height $\omega_1$ with 
countable levels. 
For any $0 < n \le \omega$, 
$T$ is \emph{$n$-splitting} if every element of $T$ 
has exactly $n$-many immediate successors. 
Similar properties such as $\len$-splitting, $(< \! n)$-splitting, and $(\ge \! n)$-splitting  
(when $n$ is finite) have the obvious meaning. 
By a \emph{subtree} of $T$ 
we mean any set $W \subseteq T$ considered as a tree with the 
order $<_T \cap \ W^2$. 
We will mostly, but not exclusively, 
be interested in subtrees which are downwards closed in the obvious sense. 
For any $x \in T$, let $T_x$ be the subtree $\{ y \in T : x \le_T y \}$. 

For any $x \in T$, we write $\h_T(x)$ for the height of $x$ in $T$. 
Let $T_\alpha$, or \emph{level $\alpha$ of $T$}, denote the set of all elements 
of $T$ with height $\alpha$, for any $\alpha < \omega_1$. 
Let $T \res A = \{ x \in T : \h_T(x) \in A \}$, for any 
$A \subseteq \omega_1$. 
If $b$ is a cofinal branch of $T$ and $\alpha < \omega_1$, 
let $b(\alpha)$ be the unique element of $b \cap T_\alpha$. 
Whenever $x \in T$ and $\beta < \h_T(x)$, $x \res \beta$ is the unique 
element below $x$ with height $\beta$. 
Similarly, $X \res \beta = \{ x \res \beta : x \in X \}$ and 
$\vec x \res \beta = (x_0 \res \beta,\ldots,x_{n-1} \res \beta)$ whenever 
$X \subseteq T_\alpha$ and $\vec x = (x_0,\ldots,x_{n-1}) \in T_\alpha^n$ for some 
$\beta < \alpha < \omega_1$. 

For any $x \in T$, we write $\Imm_T(x)$ for the set of immediate successors of $x$. 
We say that $T$ is \emph{normal} if it has a root, is Hausdorff, every element has 
incomparable elements above it, and every element has uncountably many elements 
above it (if $T$ is Aronszajn, then the third assumption is subsumed by the fourth). 
A subset of $T$ is \emph{dense open} if it is dense open in the forcing poset 
consisting of $T$ partially ordered by the reverse of the tree ordering.

A product tree $S_0 \otimes \cdots \otimes S_{n-1}$, where $S_0, \ldots, S_{n-1}$ are 
finitely many $\omega_1$-trees, is the $\omega_1$-tree 
consisting of the tuples in $S_0 \times \cdots \times S_{n-1}$ 
whose elements all have the same height, ordered component-wise. 
Note that if $S_m$ is normal for all $m < n$, then so is the product. 
A \emph{derived tree} of an $\omega_1$-tree $T$ with dimension $n$ is a tree of the form 
$T_{x_0} \otimes \cdots \otimes T_{x_{n-1}}$, where $x_0, \ldots, x_{n-1}$ are distinct 
elements of $T$ with the same height. 
If $\vec x = (x_0,\ldots,x_{n-1})$, we abbreviate 
$T_{x_0} \otimes \cdots \otimes T_{x_{n-1}}$ by $T_{\vec x}$.

The following lemma summarizes some basic facts about trees which we need.

\begin{lemma}
	Suppose that $T$ is an $\omega_1$-tree.
	\begin{enumerate}
	\item[(a)] There exists an uncountable downwards closed subtree $U$ of $T$ 
	such that every element of 
	$U$ has uncountably many elements of $U$ above it.
	\item[(b)] If $T$ is normal, then there exists a club $C \subseteq \omega_1$ such that 
	$T \res C$ is infinitely splitting.
	\item[(c)] If there exists a club $C \subseteq \omega_1$ 
	such that $T \res C$ is special, then $T$ is special.
	\item[(d)] If $T$ is normal, then the following are equivalent:
	\begin{itemize}
	\item[(i)] $T$ is Suslin;
	\item[(ii)] every dense open subset of $T$ contains some level of $T$;
	\item[(iii)] for every uncountable downwards closed set $U \subseteq T$, 
	there exists some $x \in T$ such that $T_x \subseteq U$.
	\end{itemize}
	\end{enumerate}
\end{lemma}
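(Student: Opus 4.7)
The plan is to treat the four parts separately using standard pruning, colouring, and antichain arguments; the only non-trivial case analysis arises in (c). For (a), I set $U = \{x \in T : |T_x| = \omega_1\}$, which is manifestly downwards closed. The decomposition $T = \bigcup_{r \in T_0} T_r$ over the countable root level $T_0$ forces $U$ to be nonempty since $T$ itself is uncountable. For any $x \in U$ and any $\alpha > \h_T(x)$, the set $T_x \cap T \res [\alpha, \omega_1)$ is uncountable and is partitioned by the sets $T_y$ with $y$ ranging over the countable set $T_x \cap T_\alpha$, so some such $y$ has $|T_y| = \omega_1$ and lies in $U$. Hence $U \cap T_x$ meets every level above $\h_T(x)$, giving uncountably many $U$-successors of $x$, and in particular $U$ is uncountable.

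For (b), normality provides, for each $x \in T$ and each $n < \omega$, a least ordinal $f_n(x) > \h_T(x)$ at which $x$ has $n$ distinct extensions in $T$. The set
\[
C = \{\alpha < \omega_1 : \forall x \in T \res \alpha\ \forall n < \omega\ f_n(x) < \alpha\}
\]
is club. If $\alpha < \beta$ are consecutive in $C$ and $x \in T_\alpha$, then for each $n$ the $n$ distinct extensions of $x$ at levels in $(\alpha, \beta)$ each extend further to level $\beta$ (using that every element of $T$ has uncountably many successors), so $x$ has at least $n$ immediate successors in $T \res C$.

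For (c), fix a specialization $f_0 : T \res C \to \omega$ and enumerate the maximal intervals of $\omega_1 \setminus C$ as $\{I_\alpha\}$, where each $I_\alpha$ sits just above $\alpha \in C$ (handling the initial interval $I_0 = [0, \min C)$ separately if $0 \notin C$). Each $I_\alpha$ is countable, so $T \res I_\alpha$ is a tree of countable height and admits a specialization $h_\alpha : T \res I_\alpha \to \omega$. Define $f : T \to \omega \times \omega$ by $f(x) = (f_0(x), 0)$ for $x \in T \res C$, and $f(x) = (f_0(x \res \alpha), h_\alpha(x) + 1)$ for $x \in T \res I_\alpha$, with a dedicated colouring on $T \res I_0$ when $0 \notin C$. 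Equal $f$-values force $x, x'$ into one of three situations: both in $T \res C$, both in a common $I_\alpha$, or in distinct gaps $I_\alpha, I_{\alpha'}$ with $\alpha < \alpha'$; the first two cases give incomparability by $f_0$ or $h_\alpha$ respectively, while in the third, a comparability $x <_T x'$ would give $x \res \alpha = x' \res \alpha <_T x' \res \alpha'$ in $T \res C$ with $f_0(x \res \alpha) = f_0(x' \res \alpha')$, contradicting specialization.

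For (d), the equivalence (i) $\Leftrightarrow$ (ii) comes from the standard correspondence between maximal antichains $A$ and dense open subsets $D = \{y : \exists x \in A,\ x \le_T y\}$, under which $A$ is countable iff $A$ is bounded in height iff $D$ contains a level of $T$. For (i) $\Rightarrow$ (iii), with $U$ uncountable downwards closed, the antichain $A$ of minimal elements of $T \setminus U$ is countable by Suslinness, so the set of $y$ comparable to some $a \in A$ is countable, and any other $y$ satisfies $T_y \cap \bigcup_{a \in A} T_a = \emptyset$, hence $T_y \subseteq U$. For (iii) $\Rightarrow$ (ii), given a dense open $D$, applying (iii) to $T \setminus D$ would yield $x$ with $T_x \subseteq T \setminus D$, contradicting density of $D$, unless $T \setminus D$ is countable, in which case $D$ contains every level from some $\alpha$ onward. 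The main technical care is in the case analysis of (c); otherwise everything is routine pruning and counting.
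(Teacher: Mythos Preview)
The paper does not prove this lemma (it is stated as background), so there is no comparison to make. Parts (a), (b), and (c) of your argument are correct, with only minor sloppiness in (c) regarding the initial gap below $\min C$ (easily repaired by adjoining $0$ to $C$ and extending $f_0$ trivially to level $0$).

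There is, however, a genuine error in your proof of (i) $\Rightarrow$ (iii). You assert that since the antichain $A$ of minimal elements of $T \setminus U$ is countable, ``the set of $y$ comparable to some $a \in A$ is countable.'' This is false: for any $a \in A$, every element of $T_a$ is comparable to $a$, and $T_a$ is uncountable by normality. What you actually need is the existence of \emph{some} $y$ incomparable to every $a \in A$, not that the complement is countable. The fix is straightforward: since $A$ is countable, choose $\alpha$ above the heights of all members of $A$; since $U$ is uncountable and downwards closed, $U \cap T_\alpha \ne \emptyset$. Any $x \in U \cap T_\alpha$ must be incomparable to every $a \in A$ (comparability would force $a <_T x$ by heights, giving $x \in T_a \subseteq T \setminus U$), and then $T_x \cap \bigcup_{a \in A} T_a = \emptyset$, so $T_x \subseteq U$.
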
 

We say that a cardinal $\lambda$ is \emph{large enough} if all parameters under discussion 
are in $H(\lambda)$. 
A forcing poset is \emph{totally proper} if it is proper and it does not add reals. 
If $N$ is a countable elementary substructure of $H(\lambda)$ and $\q \in N$ 
is a forcing poset, 
a condition $q \in \q$ is a \emph{total master condition for $N$} if for any 
dense open set $D$ of $\q$ in $N$, there exists some $v \in D \cap N$ 
such that $q \le v$. 
We write $\Lim(\omega_1)$ for the set of countable limit ordinals.

\section{Finitely Splitting Subtrees and Lindel\"{o}f Trees}

We begin our study by providing some \textsf{ZFC} constructions of finitely 
splitting Aronszajn trees and Aronszajn trees which contain uncountable downwards 
closed finitely splitting subtrees. 
These results are obtained by starting with a given tree and then modifying it to have 
some desired property. 
We then define the fine wedge topology on an $\omega_1$-tree and 
describe a characterization of the Lindel\"{o}f property 
of this topological space in terms of the non-existence of finitely splitting subtrees.

A standard construction of a special Aronszajn tree is to build a normal $\omega_1$-tree 
$T$ together with a strictly increasing function $\pi : T \to \q$ by recursion, 
maintaining that whenever $\pi(x) < q \in \q$ and $\h_T(x) < \alpha < \omega_1$, 
then there exists some $y \in T_\alpha$ above $x$ such that $\pi(y) < q$. 
Note that this inductive hypothesis implies that $T$ 
is infinitely splitting.

\begin{lemma}
	For every $2 \le n < \omega$, 
	there exists a special Aronszajn tree which is normal and $n$-splitting.
\end{lemma}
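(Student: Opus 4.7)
The plan is to adapt the construction sketched just above the lemma. I build a normal $n$-splitting Aronszajn tree $T$ by recursion on levels together with a strictly increasing function $\pi : T \to \q$. Strict monotonicity forces $T$ to be special, since each fibre $\pi^{-1}(q)$ is an antichain and only countably many fibres are ever used.

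Let $T_0 = \{r\}$ with $\pi(r) = 0$. At stage $\alpha + 1$, for each $x \in T_\alpha$ I add exactly $n$ new immediate successors, assigning them $n$ distinct rational $\pi$-values in the interval $(\pi(x), \pi(x) + 2^{-\h_T(x)})$. At a limit stage $\alpha$, I enumerate the countable set of pairs $(x, k)$ with $x \in T \res \alpha$ and $k < \omega$, pick for each pair a cofinal chain $b_{x,k}$ in $T \res \alpha$ passing through $x$, choosing distinct chains for distinct pairs, and put into $T_\alpha$ one new element $y_{x,k}$ per chain, sitting above exactly $b_{x,k}$, with $\pi(y_{x,k})$ any rational strictly greater than $\sup_{z \in b_{x,k}} \pi(z)$. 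Throughout I maintain the inductive hypothesis that whenever $\h_T(x) < \beta$ and $T_\beta$ is already defined, some element of $T_\beta$ lies above $x$; this is preserved trivially at successor steps and, at limits, because the pair $(x,0)$ supplies such an element of $T_\alpha$ above each $x$ at a lower level. It then follows routinely that every node has $n \ge 2$ pairwise-incomparable immediate successors and uncountably many descendants; $T$ is Hausdorff because distinct chains were chosen at each limit; and no uncountable chain exists, since $\pi$ would embed it into the countable set $\q$.

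The main obstacle, and the essential modification of the original sketch, is the limit stage: with only $n$ immediate successors the $\pi$-values of $\Imm_T(x)$ are bounded away from $\pi(x)$, so the density inductive hypothesis of the infinitely-splitting construction fails outright. I resolve this by making the interval widths $2^{-\h_T(x)}$ summable along any chain, so that for a cofinal chain of height $\omega$ starting at level $h_0$ the $\pi$-values are bounded above by $\pi(x_0) + 2^{-h_0+1}$; this keeps $\sup_{z \in b_{x,k}} \pi(z)$ a finite real and permits selecting a rational strictly above it as $\pi(y_{x,k})$. Because $n$-splitting below $\alpha$ produces uncountably many cofinal chains through each $x$ in $T_x \res \alpha$, there is ample room to choose $\omega$ distinct chains per $x$, so the Hausdorff requirement at limits causes no further difficulty.
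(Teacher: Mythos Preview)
Your construction has a genuine gap beyond level $\omega$. The width $2^{-\h_T(x)}$ is only meaningful when $\h_T(x)$ is a natural number, so the successor-stage rule ``place the $\pi$-values in $(\pi(x),\pi(x)+2^{-\h_T(x)})$'' is undefined once $\h_T(x)\ge\omega$. Your boundedness computation explicitly treats only ``a cofinal chain of height $\omega$ starting at level $h_0$'' with $h_0\in\omega$, and says nothing about how to continue afterwards.

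Even under a charitable reinterpretation (say, resetting the width to $2^{-k}$ at $k$ steps past the previous limit ordinal), the argument fails at limits of limits. At each limit level $\alpha$ you choose $\pi(y_{x,k})$ to be an \emph{arbitrary} rational above the supremum, so the jump there is uncontrolled; along a chain cofinal in $T\res\omega^2$ you pass through the limit levels $\omega,\omega\cdot 2,\omega\cdot 3,\ldots$ and accumulate infinitely many such uncontrolled jumps, so the supremum need not be finite. More generally, no sequence $(\epsilon_\alpha)_{\alpha<\omega_1}$ of positive reals can have $\sum_{\beta<\alpha}\epsilon_\beta<\infty$ for every countable $\alpha$: for some $n$ the set $\{\alpha:\epsilon_\alpha>1/n\}$ would be uncountable, and any countable subset of it already has infinite sum below its (countable) supremum. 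So a purely level-indexed summability scheme cannot carry a strictly increasing $\pi$ all the way to $\omega_1$; this is precisely why the standard construction needs the density hypothesis (hence infinite splitting) that you correctly identified as unavailable here.

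The paper avoids the difficulty by not building from scratch. It takes an already-constructed normal infinitely-splitting special Aronszajn tree $T$ and \emph{stretches} it: between each $x\in T$ and its immediate successors it inserts $\omega$ many new levels forming a copy of ${}^{<\omega}n\setminus\{\emptyset\}$, and routes the old immediate successors of $x$ through distinct cofinal branches of this copy (chosen so that every node of the copy lies on some branch). The resulting tree $U$ is normal and $n$-splitting, and since $U$ restricted to the club $\Lim(\omega_1)\cup\{0\}$ equals $T$, it is special by Lemma~1.1(c). No new $\pi$-function needs to be built.
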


\begin{proof}
	Start with a special Aronszajn tree $T$ which is normal and infinitely splitting. 
	We build a normal $\omega_1$-tree $U$ satisfying that 
	$U \res (\Lim(\omega_1) \cup \{ 0 \}) = T$. 
	For any $x \in T$, insert $\omega$-many new levels in between $x$ and its immediate 
	successors in $T$ which is a copy of the tree 
	$({}^{< \omega} n \setminus \{ \emptyset \},\subset)$. 
	Now pick distinct cofinal branches $\langle b_k : k < \omega \rangle$ of this copy 
	of ${}^{< \omega} n \setminus \{ \emptyset \}$ 
	so that every element of the copy belongs to some branch. 
	Enumerating the immediate successors of $x$ in $T$ as $\langle y_k : k < \omega \rangle$, 
	specify $y_k$ to be the unique upper bound of $b_k$ for all $k < \omega$. 
	It is routine to check that the tree $U$ obtained in this manner is normal 
	and $n$-splitting. 
	Since $U$ restricted to the club set of countable limit ordinals is special, 
	by Lemma 1.1(c) so is $U$.
\end{proof}

\begin{proposition}
	There exists a special Aronszajn tree $U$ which is normal, 
	infinitely splitting, and contains a normal uncountable downwards closed 
	$2$-splitting subtree. 
	In fact, for each $x \in U$, $U_x$ contains such a subtree.
\end{proposition}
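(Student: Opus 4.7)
The plan is to adapt the construction of Lemma~2.1 by inserting an infinitely splitting intermediate tree in place of a finite one, so that the resulting $U$ is infinitely splitting yet contains a canonical $2$-splitting subtree.

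First I would take $T$ to be a normal infinitely splitting special Aronszajn tree as in Lemma~2.1, and build $U$ satisfying $U \res (\Lim(\omega_1) \cup \{0\}) = T$ by inserting, between each $x \in T$ and its immediate $T$-successors, a copy of the tree $B := (\omega^{< \omega} \setminus \{\emptyset\}, \subset)$. This $B$ is infinitely splitting and contains the canonical $2$-splitting subtree $B' := 2^{< \omega} \setminus \{\emptyset\}$. The new ingredient is the choice of the distinguished cofinal branches $\langle b_n : n < \omega \rangle$ of $B$ identifying the immediate $T$-successors of $x$: beyond the condition (a) that every $\sigma \in B$ is a prefix of some $b_n$ (so $U$ is normal, as in Lemma~2.1), I would require (b) that for every $\sigma \in B \cup \{\emptyset\}$ at least two distinct $b_n$'s are of the form $\sigma^\frown \rho$ with $\rho \in 2^\omega$. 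Such a family exists; for example, enumerate the distinct branches of the form $\tau^\frown 0^\omega$ and $\tau^\frown 1 0^\omega$ for $\tau \in B \cup \{\emptyset\}$. The verifications of Lemma~2.1 then show that $U$ is a normal infinitely splitting Aronszajn tree, and it is special by Lemma~1.1(c).

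I would then define $V \subseteq U$ recursively: $V$ contains the root of $T$; at each $v \in V$ lying at a $T$-level, $V$ contains the copy of $B'$ inside the $B$-block above $v$ together with the upper bounds at the next $T$-level of all those $b_n$'s lying in $2^\omega$; and at limit $T$-levels, $V$ contains the $T$-upper bounds of all $V$-chains. Since binary $b_n$'s have only binary prefixes, $V$ is downwards closed in $U$. At every $V$-node the two immediate $U$-successors lying in $V$ are the two binary immediate successors in the ambient $B$-block, so $V$ is $2$-splitting. Normality and uncountability of $V$ follow from (b) applied at $\sigma = \emptyset$ (giving countably many immediate $T$-level successors in $V$ above each $T$-level $V$-node) together with the strong normality of $T$ built into its standard specialization-function construction.

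For the ``in fact'' claim, given $x \in U$ at position $\sigma$ in some $B$-block (with $\sigma = \emptyset$ if $x$ is itself a $T$-level node), condition (b) provides two distinct binary-extending branches above $\sigma$. Running the analogous recursive construction rooted at $x$---using the two binary-extending $b_n$'s above $\sigma$ as the distinguished upper bounds in $x$'s current $B$-block, then continuing with the $V$-style construction above those upper bounds---yields a normal $2$-splitting uncountable downwards closed subtree of $U_x$. The main obstacle will be the standard limit step of checking that the recursive subtree extends through every countable limit $T$-level, which will follow from the strong normality of $T$ exactly as for $V$.
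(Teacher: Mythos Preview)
Your approach is genuinely different from the paper's: the paper starts with a normal $2$-splitting special Aronszajn tree $T$ (via Lemma~2.1) and builds $U$ \emph{around} it by grafting infinitely many fresh copies of $T$ above every node at every stage, so that $T$ sits inside $U$ as the desired subtree with no further work. You instead start with an infinitely splitting $T$, thicken levels with $\omega^{<\omega}$-blocks, and then try to \emph{carve out} a $2$-splitting subtree $V$. This carving direction is where the trouble lies.

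The gap is at the limit step you flag but do not actually resolve. Your subtree $V\cap T$ is obtained from $T$ by keeping, at each $T$-node $x$, only those immediate $T$-successors $y_n$ whose matched branch $b_n$ lies in $2^\omega$; this is an infinite co-infinite subset of the $T$-successors of $x$. Nothing in your construction controls \emph{which} $y_n$'s are kept---the enumeration and matching are arbitrary---so at a limit $T$-level $\alpha$ there is no reason any cofinal chain in $V\cap (T\res\alpha)$ should be one of the countably many chains that happen to have an upper bound in $T_\alpha$. Ordinary normality of $T$ (every node has elements above it at every level) says nothing about which chains get capped, and the standard specializing-function property you invoke only guarantees that \emph{some} chain above each $x$ with small $\pi$-values gets capped, not that your particular $V$-chains do. So ``strong normality of $T$'' does not deliver what you need without further argument.

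This can be repaired, but it takes real work: you would have to coordinate the matching $b_n\leftrightarrow y_n$ with the specializing function $\pi$ so that $V\cap T$ itself satisfies the inductive hypothesis ``for all $x\in V\cap T$, $\alpha>\h_T(x)$, $q>\pi(x)$, there is $y\in (V\cap T)_\alpha$ above $x$ with $\pi(y)<q$,'' and this in turn forces you to build $T$ and the matching simultaneously so that at limit levels the capped chains can be chosen inside $V\cap T$. The paper's additive construction sidesteps all of this.
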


\begin{proof}
	By Lemma 2.1, we can fix a special Aronszajn tree $T$ which is normal and $2$-splitting. 
	Let $\pi : T \to \omega$ be a specializing function. 
	We construct by induction a sequence of normal $\omega_1$-trees 
	$\langle U^\alpha : \alpha < \omega_1 \rangle$ satisfying that $U^0 = T$ and 
	for all $\alpha < \beta < \omega_1$, $U^\alpha$ is a downwards closed subtree of 
	$U^\beta$ and $U^\beta \res (\alpha+1) = U^\alpha \res (\alpha+1)$. 
	Note that since $U^\alpha$ is downwards closed in $U^\beta$, 
	$\h_{U^\beta} \res U^\alpha = \h_{U^\alpha}$.

	Begin by letting $U^0 = T$. 
	Suppose that $U^\alpha$ is defined for some fixed $\alpha < \omega_1$. 
	Define $U^{\alpha+1}$ which extends $U^\alpha$ by adding above each 
	$x \in U^\alpha_\alpha$ infinitely many pairwise disjoint trees, which we denote by 
	$U(\alpha,x,n)$ for $n < \omega$, 
	each of them isomorphic to $T$ minus the root of $T$. 
	Since $T$ is special, we can fix a specializing function 
	$\pi_{\alpha,x,n} : U(\alpha,x,n) \to \omega$ for each such $x$ and $n$. 
	This completes the constriction of $U^{\alpha+1}$, which easily satisfies the 
	required properties. 
	Now assume that $\delta < \omega_1$ is a limit ordinal and 
	$U^\alpha$ is defined for all $\alpha < \delta$. 
	Let $U^\delta = \bigcup_{\alpha < \delta} U^\alpha$. 
	Again, it is straightforward to verify that $U^\delta$ is as required. 
	This completes the construction.
	
	Now let $U = \bigcup_{\alpha < \omega_1} U^\alpha$. 
	It is routine to check that $U$ is a normal $\omega_1$-tree, 
	and for all $\alpha < \omega_1$, $U^\alpha$ is a downwards closed subtree of $U$ 
	and $U \res (\alpha+1) = U^\alpha \res (\alpha+1)$. 
	For each $\alpha$, since $U^\alpha$ is downwards closed in $U$, 
	$\h_U \res U^\alpha = \h_{U^\alpha}$. 	
	For each $z \in U$, let $\alpha_z < \omega_1$ be the least ordinal such that 
	$z \in U^{\alpha_z}$. 
	Then either $\alpha_z = 0$ or $\alpha_z$ is a successor ordinal. 
	In the latter case, let $\alpha_z = \beta_z+1$. 
	Note that if $y <_U z$, then $\alpha_y \le \alpha_z$ since $U^{\alpha_z}$ 
	is downwards closed in $U$. 
	For each $z \in U^0 = T$, let $x_z$ denote the root of $T$. 
	For each $z \in U \setminus U^0$, let $x_z$ be the unique member of 
	$U^{\beta_z}_{\beta_z}$ such that $x_z <_U z$ and 
	let $n_z < \omega$ be such that $x_z \in U(\beta_z,x_z,n_z)$.
	
	To show that $U$ is infinitely splitting, consider $z \in U$. 
	Let $\gamma = \h_{U}(z)$. 
	If $\alpha_z = 0$, then at stage $\gamma$ we have that 
	$\h_{U^\gamma}(z) = \gamma$, and we added $\omega$-many immediate successors 
	of $z$ in forming $U^{\gamma+1}$. 
	Suppose that $\alpha_z > 0$. 
	Every element of $U^{\alpha_z} \setminus U^{\beta_z}$ has height at least 
	$\alpha_z$ in $U^{\alpha_z}$. 
	Therefore, $\gamma > \beta_z$. 
	At stage $\gamma$ we have that $\h_{U^\gamma}(z) = \gamma$ 
	and we added $\omega$-many immediate successors of $z$ in forming $U^{\gamma+1}$. 
	This completes the proof that $U$ is infinitely splitting.

	\underline{Claim:} Suppose that $y <_U z$. 
	If $\alpha_y = \alpha_z$ then $x_y = x_z$, and if also $\alpha_y > 0$ 
	then $n_y = n_z$. 
	If $\alpha_y < \alpha_z$ then $x_y <_U x_z <_U z$. 

	\emph{Proof.} First, assume that $\alpha_y = \alpha_z$. 
	If $\alpha_y = 0$ then $x_y = x_z$ is the root of $T$. 
	If $\alpha_y > 0$, 
	then easily $x_y = x_z$ and $n_y = n_z$, for otherwise $y$ and $z$ are incomparable. 
	Secondly, assume that $\alpha_y < \alpha_z$. 
	If $\alpha_y = 0$, then clearly $x_y <_U x_z$ since $x_y$ is the root of $T$. 
	Suppose that $\alpha_y > 0$. 
	Since $y <_U z$, $x_y <_U y$, and $x_z <_U z$, it follows that 
	$x_y$ and $x_z$ are comparable in $U$. 
	But $x_y$ has height $\beta_y$, which is less than the height of $x_z$ 
	which is $\beta_z$. 
	So $x_y <_U x_z$. $\dashv_{\ \text{Claim}}$

	To show that $U$ is special, 
	we define a map $\pi_U : U \to {}^{<\omega} \omega \setminus \{ \emptyset \}$ 
	by induction as follows. 
	For any $z \in U^0 = T$, let $\pi_U(z) = \langle \pi(z) \rangle$. 
	Now consider $z \in U \setminus U^0$ and assume that $\pi_U(y)$ is defined 
	for all $y \in U$ such that $\alpha_y < \alpha_z$. 
	In particular, we have defined $\pi_U(x_z)$. 
	Let $\pi_U(z)$ be the concatenation 
	$\pi_U(x_z)^\frown \langle \pi_{\beta_z,x_z,n_z}(z) \rangle$.

	To show that $\pi_U$ is a specializing map, suppose that $y <_U z$. 
	Assume by induction that for all $c <_U d$ with heights less than 
	$\h_U(z)$, $\pi_U(c) \ne \pi_U(d)$.
	
	\emph{Case 1:} $\alpha_y = \alpha_z$. 
	Then by the claim, $x_y = x_z$, and $n_y = n_z$ in the case that $\alpha_y > 0$. 
	If $\alpha_y = 0$, then $\pi_U(y) = \langle \pi(y) \rangle 
	\ne \langle \pi(z) \rangle = \pi_U(z)$. 
	Suppose that $\alpha_y > 0$. 
	As $\pi_{\beta_y,x_y,n_y}$ is a specializing map on $U(\beta_y,x_y,n_y)$, 
	$\pi_{\beta_y,x_y,n_y}(y) \ne \pi_{\beta_z,x_z,n_z}(z)$. 
	So 
	$$
	\pi_U(y) = \pi_U(x_y)^\frown \langle \pi_{\beta_y,x_y,n_y}(y) \rangle 
	\ne \pi_U(x_z)^\frown \langle \pi_{\beta_z,x_z,n_z}(z) \rangle = \pi_U(z).
	$$

	\emph{Case 2:} $\alpha_y < \alpha_z$. 
	Then by the claim, $x_y <_U x_z <_U z$. 
	By the inductive hypothesis, $\pi_U(x_y) \ne \pi_U(x_z)$. 
	If $\alpha_y = 0$, then $\pi_U(y) = \langle \pi(y) \rangle$, which has length $1$. 
	But $\pi_U(z) = 
	\pi_U(x_z) \con \pi_{\beta_z,x_z,n_z}(z)$, which has length at least $2$ 
	and hence is different form $\pi_U(y)$. 
	If $\alpha_y > 0$, then 
	$$
	\pi_U(y) = \pi_U(x_y)^\frown \langle \pi_{\beta_y,x_y,n_y}(y) \rangle \ne 
	\pi_U(x_z)^\frown \langle \pi_{\beta_z,x_z,n_z}(z) \rangle = \pi_U(z).
	$$
	
	So indeed $U$ is special, and $T$ is an uncountable downwards closed subtree of $U$ 
	which is normal and $2$-splitting. 
	Now for each $x \in U$ with some height $\gamma$, 
	$U_x$ contains $U(\gamma,x,0) \cup \{ x \}$, which is an uncountable 
	downwards closed subtree of $U_x$ which is normal and $2$-splitting.
\end{proof}

A similar construction yields the following proposition.

\begin{proposition}
	Let $2 \le m < n \le \omega$. 
	Assume that $T$ is an $\omega_1$-tree which is $(\ge \! n)$-splitting 
	and for all $x \in T$, $T_x$ contains an uncountable downwards closed 
	$\lem$-splitting subtree. 
	Then $T$ contains an uncountable downwards closed $m$-splitting subtree. 
	Moreover, if $T$ is normal, then there is such a subtree which is normal.
\end{proposition}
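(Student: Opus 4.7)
The plan is to extract an $m$-splitting subtree by starting with a $(\le m)$-splitting uncountable downwards closed subtree furnished by the hypothesis and iteratively ``topping up'' each element that has fewer than $m$ immediate successors, adjoining further immediate successors from $T$ together with their own $(\le m)$-splitting cones. The slack needed for this topping up comes from $T$ being $(\ge n)$-splitting with $n > m$.

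First, for each $y \in T$, I would fix a canonical subtree $W_y \subseteq T_y$ which is uncountable, downwards closed in $T_y$, $(\le m)$-splitting, contains $y$ as its minimum, and has the property that every element of $W_y$ has uncountably many elements of $W_y$ above it. To construct $W_y$: apply the hypothesis to get a $(\le m)$-splitting uncountable downwards closed subtree $W'_y \subseteq T_y$; apply Lemma 1.1(a) to $W'_y$ to obtain a subtree $U_y \subseteq W'_y$ in which every element has uncountably many elements of $U_y$ above; pick any $r_y \in U_y$; and set
\[
W_y := \{ z \in T : y \le_T z \le_T r_y \} \cup (U_y \cap T_{r_y}).
\]
The $T$-stem from $y$ to $r_y$ is $1$-splitting, hence $(\le m)$-splitting, and the cone $U_y \cap T_{r_y}$ inherits $(\le m)$-splitting from $W'_y$ and the cofinality property from $U_y$.

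Next, fix a root $x_0$ of $T$ (the unique one when $T$ is normal). Define an increasing sequence $V_0 \subseteq V_1 \subseteq \cdots$ of downwards closed subtrees of $T$ recursively by setting $V_0 := W_{x_0}$ and forming $V_{k+1}$ from $V_k$ as follows: for each $y \in V_k$ with $|\Imm_{V_k}(y)| < m$, choose $m - |\Imm_{V_k}(y)|$ distinct immediate successors of $y$ lying in $T \setminus V_k$ (which exist because $|\Imm_T(y)| \ge n > m$), and let $V_{k+1}$ be $V_k$ together with $W_z$ for each chosen $z$. Set $V := \bigcup_{k < \omega} V_k$. Downward closure of $V$ is inherited from each $V_k$; uncountability follows from $V_0$; and $V$ is $m$-splitting because any $y \in V$ first appears in some $V_k$, whereupon stage $k+1$ brings $|\Imm_{V_{k+1}}(y)|$ up to exactly $m$, and no subsequent stage adds further immediate successors of $y$ since later additions occur only above freshly chosen $z$'s. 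For the moreover clause, when $T$ is normal, $V$ has root $x_0$ and inherits Hausdorffness from $T$, has incomparable elements above each element since $m \ge 2$, and has uncountably many elements above each element because each $y \in V$ lies in some $W \in \{ W_{x_0} \} \cup \{ W_z : z \text{ chosen at some stage} \}$, and such $W$ has this cofinality property by construction.

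The main obstacle is Step 1: simultaneously arranging for $W_y$ to be $(\le m)$-splitting, rooted at $y$, and to have the cofinality-of-descendants property. The hypothesis alone gives $(\le m)$-splitting without control on cofinality of descendants, while Lemma 1.1(a) applied to the hypothesized subtree may produce a subtree omitting $y$. The stem-plus-cone construction above reconciles these demands by using that $(\le m)$-splitting tolerates a $1$-splitting stem at the bottom. Once Step 1 is in hand, the recursion and verification in the remaining steps are routine bookkeeping.
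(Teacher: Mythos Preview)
Your proof is correct and follows essentially the same strategy as the paper's sketch: start with a $(\le m)$-splitting subtree and iteratively top up deficient elements by adjoining fresh immediate successors together with $(\le m)$-splitting cones above them. The main organizational difference is that the paper indexes its recursion by levels (an $\omega_1$-length sequence $\langle U^\alpha : \alpha < \omega_1 \rangle$, handling at stage $\alpha+1$ only the elements of height $\alpha$), whereas you run an $\omega$-length recursion in which every deficient element of $V_k$ is topped up at once in passing to $V_{k+1}$; both schemes work because once an element is topped up it is never revisited. Your stem-plus-cone construction of $W_y$ is a clean way to simultaneously root the auxiliary subtree at $y$ and guarantee the cofinality-of-descendants property needed for normality, a detail the paper suppresses.
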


\begin{proof}[Proof sketch.]
	Let $W$ be some uncountable downwards closed $\lem$-splitting subtree of $T$. 
	In the case that $T$ is normal, by thinning out further if necessary 
	using Lemma 1.1(a) we may assume that $W$ is normal. 
	Similar to the proof of Proposition 2.2, 
	we build a sequence $\langle U^\alpha : \alpha < \omega_1 \rangle$ 
	of subtrees of $T$ roughly as follows. 
	Let $U^0 = W$, and take unions at limit stages. 
	Assuming that $U^\alpha$ is defined for a fixed $\alpha < \omega_1$, 
	pick for each $x \in U^{\alpha}_\alpha$, 
	$(m-m_x)$-many immediate successors of $x$ in $T \setminus U^\alpha$, where 
	$m_x \le m$ is the number of immediate successors of $x$ in $U^\alpha$. 
	Now construct $U^{\alpha+1}$ by adding 
	above each such immediate successor $y$ 
	some uncountable downwards closed $\lem$-splitting subtree of $T_y$, 
	which we may take to be normal assuming that $T$ is normal. 
	We leave the details to the interested reader.
\end{proof}

\begin{corollary}
	There exists a special Aronszajn tree $T$ which is normal, infinitely splitting, 
	and satisfies that for each $2 \le m < \omega$, 
	$T$ contains an uncountable downwards closed 
	subtree which is normal and $m$-splitting.
\end{corollary}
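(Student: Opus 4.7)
The plan is to combine Propositions 2.2 and 2.3 directly. First I would invoke Proposition 2.2 to fix a special Aronszajn tree $U$ which is normal, infinitely splitting, and has the strong local property that for every $x \in U$, the cone $U_x$ contains a normal uncountable downwards closed $2$-splitting subtree. This $U$ will be the tree $T$ advertised in the corollary; the only remaining task is to produce, for each fixed $2 \le m < \omega$, a normal uncountable downwards closed $m$-splitting subtree of $U$.

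Fix such an $m$. To obtain the $m$-splitting subtree I would apply Proposition 2.3 with $T := U$ and $n := \omega$. The hypotheses are verified as follows: $U$ is infinitely splitting, hence $(\ge \! \omega)$-splitting, which gives the $(\ge \! n)$-splitting assumption (and the strict inequality $m < n = \omega$ is automatic). For the local hypothesis, given any $x \in U$, Proposition 2.2 supplies an uncountable downwards closed subtree of $U_x$ which is $2$-splitting, and since $2 \le m$ this same subtree is $\lem$-splitting. Thus Proposition 2.3 applies and, because $U$ is normal, its ``moreover'' clause yields a normal uncountable downwards closed $m$-splitting subtree of $U$. Doing this for each $m$ with $2 \le m < \omega$ proves the corollary.

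There is no real obstacle here; the argument is essentially a packaging of the previous two results. The only mild point to double-check is that the hypotheses of Proposition 2.3 are met uniformly in $m$: the local $2$-splitting subtrees of Proposition 2.2 serve simultaneously as witnesses for every $m \ge 2$, so a single choice of $U$ works for all arities at once. Specialness, normality, and infinite splitting of $T := U$ are inherited verbatim from Proposition 2.2, so no additional construction is needed.
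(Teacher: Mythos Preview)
Your proposal is correct and follows exactly the route the paper intends: the corollary is stated without proof immediately after Propositions 2.2 and 2.3, and your argument simply packages those two results together as you describe. The only cosmetic point is that the paper reserves the notation $(\ge\! n)$-splitting for finite $n$; you could equally well apply Proposition 2.3 with $n = m+1$ for each $m$, but taking $n = \omega$ (interpreted as infinitely splitting) is clearly what the proposition means when $n = \omega$, so nothing substantive changes.
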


Let $2 \le m < n < \omega$ and suppose that 
$T$ is a $(\ge \! n)$-splitting $\omega_1$-tree 
which contains a $\lem$-splitting subtree. 
Then $T$ also contains a (non-normal) $n$-splitting subtree, which can be obtained by 
adding new elements with successor height to the $\lem$-splitting subtree to 
ensure $n$-splitting, but at limit levels only including members of the $\lem$-splitting subtree. 
The triviality of this observation displays the importance of the assumption of normality 
on the subtrees.

\begin{proposition}
	Assume $\Diamond$. 
	Then for any $2 \le m < n < \omega$, 
	there exists an infinitely splitting normal Aronszajn tree 
	which contains a normal uncountable downwards closed $m$-splitting subtree, 
	but does not contain a normal uncountable downwards closed $n$-splitting subtree.
\end{proposition}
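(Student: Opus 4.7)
The plan is to carry out a $\Diamond$-construction of the desired tree $T$, built simultaneously with a distinguished downwards closed $m$-splitting normal subtree $T^*$, while diagonalizing at countable limit stages against every potential normal uncountable downwards closed $n$-splitting subtree of $T$.

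First I would fix a $\Diamond$-sequence $\langle A_\delta : \delta \in \Lim(\omega_1) \rangle$ and identify the underlying set of $T$ with $\omega_1$ in a level-coherent fashion, so that each $A_\delta$ is read as a subset of $T \res \delta$. Then I would define $T_\alpha$ and $T^*_\alpha$ by recursion on $\alpha < \omega_1$. At a successor stage $\alpha+1$, I assign to each $x \in T_\alpha$ exactly $\omega$-many immediate successors in $T$, and if $x \in T^*_\alpha$ designate exactly $m$ of these as belonging to $T^*_{\alpha+1}$. This immediately guarantees that $T$ is infinitely splitting at every node and that $T^*$ is $m$-splitting.

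At a limit stage $\delta$, I choose countably many cofinal branches of $T \res \delta$ to realize as $T_\delta$, subject to three requirements. (i) For each $x \in T^* \res \delta$, at least one cofinal branch of $T^* \res \delta$ through $x$ is realized, preserving normality of $T^*$. (ii) For each $x \in T \res \delta$, at least one cofinal branch of $T \res \delta$ through $x$ is realized, preserving normality of $T$. (iii) \emph{Diagonalization}: if $X := A_\delta \cap (T \res \delta)$ happens to be a normal downwards closed $n$-splitting subtree of $T \res \delta$, then select some $x_\delta \in X \setminus T^*$ and arrange that every cofinal branch realized through $x_\delta$ exits $X$ at the first successor above $x_\delta$. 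Existence of such an $x_\delta$ is the key combinatorial observation: were $X \subseteq T^*$, each $y \in X$ would have $n$ immediate successors in $X \subseteq T^*$, contradicting that $T^*$ is $m$-splitting with $m < n$. Compatibility of (iii) with (ii) uses that $T \res \delta$ is infinitely splitting above $x_\delta$ while only $n$ immediate successors of $x_\delta$ lie in $X$; we may realize a cofinal branch of $T \res \delta$ through an immediate successor of $x_\delta$ outside $X$, which simultaneously covers $x_\delta$. Downwards closedness of $T^*$ combined with $x_\delta \notin T^*$ forces every node above $x_\delta$ to lie outside $T^*$, so no conflict with (i) arises either.

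After the construction, the standard $\Diamond$-absorption argument handles the no-$n$-splitting-subtree clause: if $X \subseteq T$ were a normal uncountable downwards closed $n$-splitting subtree, then the set $\{\delta \in \Lim(\omega_1) : X \cap (T \res \delta) = A_\delta \cap (T \res \delta)\}$ is stationary, and for any $\delta$ in this set, $X \cap (T \res \delta)$ is a normal $n$-splitting downwards closed subtree of $T \res \delta$, so the construction applied step (iii) with some $x_\delta \in X \setminus T^*$. On the other hand, normality of $X$ in $T$ supplies some $y \in X \cap T_\delta$ with $x_\delta <_T y$, and downwards closedness of $X$ in $T$ places the entire cofinal branch below $y$ inside $X$, producing a realized $X$-branch through $x_\delta$ — a contradiction.

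The main obstacle will be harmonizing the three limit-stage requirements, that is, verifying that the diagonalization step does not force us to abandon normality of either $T$ or $T^*$. The resolution rests on the two properties highlighted above: $X \not\subseteq T^*$ (supplying an unconstrained $x_\delta$), and the infinite splitting of $T$ above every node (leaving ample non-$X$ branches available to realize). Once these are secured, the remaining verifications that $T$ is a normal infinitely splitting Aronszajn tree and $T^*$ is a normal $m$-splitting downwards closed uncountable subtree reduce to routine tree-construction bookkeeping at successor and limit stages.
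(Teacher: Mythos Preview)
The paper leaves this proposition as an exercise and gives no proof, so there is nothing to compare against. Your $\Diamond$-construction is the natural approach and the overall architecture is correct, but step (iii) as stated does not work.

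You require that every realized branch through $x_\delta$ exit $X$ \emph{at the first successor above $x_\delta$}. This clashes with requirement (ii): the $n$ immediate successors of $x_\delta$ that lie in $X$ are themselves elements of $T \res \delta$, and normality of $T$ forces you to realize a cofinal branch through each of them. Any such branch passes through $x_\delta$ and does \emph{not} leave $X$ at the first step. Your compatibility paragraph only explains how to cover $x_\delta$ itself, not how to cover these $X$-successors, so as written the three limit-stage requirements are not simultaneously satisfiable.

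The repair is to weaken (iii) to: every realized branch through $x_\delta$ exits $X$ \emph{somewhere}. This is compatible with (ii): for each $z \ge_T x_\delta$ in $T \res \delta$, if $z \in X$ choose an immediate successor $z'$ of $z$ with $z' \notin X$ (available since $T$ is $\omega$-splitting while $X$ is $n$-splitting at $z$) and realize a cofinal branch through $z'$; if $z \notin X$ realize any cofinal branch through $z$. Either way the branch covers $z$ and contains a node outside $X$, so no realized branch above $x_\delta$ lies entirely in $X$. Since $x_\delta \notin T^*$ and $T^*$ is downwards closed, the cone above $x_\delta$ is disjoint from $T^*$, so these choices do not interfere with (i). Your final $\Diamond$-absorption argument then goes through unchanged, since the contradiction only needs a single $X$-branch through $x_\delta$ to have been realized.
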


We leave the proof as an exercise for the interested reader.

We now turn to topological applications of finitely splitting subtrees. 
Let $T$ be an $\omega_1$-tree. 
For any $x \in T$ let $x \up$ denote the set $\{ y \in T : x \le_T y \}$, and for 
any $X \subseteq T$ let $X \up$ denote the union $\bigcup \{ x \up : x \in X \}$.

\begin{definition}[\cite{nyikos}]
	Let $T$ be an $\omega_1$-tree. 
	The \emph{fine wedge topology} on $T$ is the topological space on $T$ with subbase given 
	by sets of the form $x \up$ and $T \setminus (x \up)$ for any $x \in T$.
\end{definition}

It is routine to show that for any $x \in T$, the collection 
of sets of the form $x \up \setminus F \up$, where $F$ is a finite subset 
of $\Imm_T(x)$, is a local base at $x$.

Recall that a topological space $X$ is \emph{Lindel\"{o}f} if every open cover 
of $X$ contains a countable subcover. 
In recent work, Marun characterized the Lindel\"{o}f property of an $\omega_1$-tree 
in the fine wedge 
topology in terms of finitely splitting subtrees.

\begin{thm}[{\cite[Theorem 4.9]{marun}}]
	Let $T$ be an $\omega_1$-tree considered as a 
	topological space with the fine wedge topology. 
	Then $T$ is Lindel\"{o}f iff it does not contain 
	an uncountable downwards closed finitely splitting subtree.
\end{thm}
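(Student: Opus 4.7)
The plan is to prove both implications. For the easy direction ($\Leftarrow$), assume $W$ is an uncountable downwards closed finitely splitting subtree of $T$. For each $x \in W$ set $V_x := x \up \setminus (\Imm_T(x) \cap W) \up$, a basic open neighborhood of $x$ since $\Imm_T(x) \cap W$ is finite. Downwards closure of $W$ gives $V_x \cap W = \{x\}$: any element of $W$ strictly above $x$ lies above some immediate successor of $x$, which must itself lie in $W$. Since $W$ is downwards closed, $T \setminus W$ is upwards closed, so $\{ y \up : y \in T \setminus W \}$ consists of open sets disjoint from $W$; together with the $V_x$, these form an open cover of $T$. Any countable subfamily meets $W$ in at most countably many points and so cannot cover the uncountable set $W$.

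For the converse, fix an open cover $\mathcal{U}$ of $T$ with no countable subcover. Using the local base description recalled just before the theorem, I may assume without loss of generality that every $U \in \mathcal{U}$ has the form $a \up \setminus F \up$ for some $a \in T$ and finite $F \subseteq \Imm_T(a)$. Define
$$
W := \{ x \in T : \text{no countable subfamily of } \mathcal{U} \text{ covers } x \up \}.
$$
Then $W$ is downwards closed, since $y \le_T x$ implies $y \up \supseteq x \up$. If $W \subseteq T \res \alpha$ for some $\alpha < \omega_1$, then $T \res \alpha$ and $T_\alpha$ are countable, and for each $x \in T_\alpha$ some countable subfamily of $\mathcal{U}$ covers $x \up$; combining these with one element of $\mathcal{U}$ for each point of $T \res \alpha$ produces a countable subcover of $T = (T \res \alpha) \cup \bigcup_{x \in T_\alpha} x \up$, contradicting the choice of $\mathcal{U}$. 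So $W$ is unbounded in height, and by downwards closure $W$ meets every level of $T$, making $W$ uncountable.

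The main step is to show that $W$ is finitely splitting, which is the heart of the argument and where I expect the main obstacle to lie. Fix $x \in W$ and pick $U_x = a \up \setminus F \up \in \mathcal{U}$ containing $x$, so $a \le_T x$ and $x \notin F \up$. I claim $a = x$: if instead $a <_T x$, then for any $y \ge_T x$ and any $z \in F$ with $z \le_T y$, the tree property forces $z$ and $x$ to be comparable, but $z \le_T x$ would yield $x \in F \up$ while $x <_T z$ would yield $\h_T(x) < \h_T(z) = \h_T(a) + 1$, contradicting $a <_T x$; no such $z$ can exist, so $y \in U_x$, giving $x \up \subseteq U_x$ and contradicting $x \in W$. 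Hence $a = x$ and $F \subseteq \Imm_T(x)$. An analogous argument using that distinct immediate successors of $x$ are pairwise incomparable shows $z \up \subseteq U_x$ for every $z \in \Imm_T(x) \setminus F$, so each such $z$ lies outside $W$; therefore $\Imm_T(x) \cap W \subseteq F$ is finite. The delicate point is recognizing that any cover element containing a ``bad'' $x$ must be anchored exactly at $x$, after which finite splitting at $x$ is forced by the local base structure.
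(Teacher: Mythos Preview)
The paper does not prove this theorem; it is quoted from Marun \cite[Theorem 4.9]{marun} and used as a black box, so there is no ``paper's own proof'' to compare against.

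Your argument is correct in both directions. The forward direction is exactly the natural one: the sets $V_x$ isolate each point of $W$ within $W$, and the remaining cones $y\!\up$ for $y\notin W$ handle the complement without touching $W$. For the converse, your set $W$ of ``bad'' points is the right object, and the key observation---that any basic open set $a\!\up\setminus F\!\up$ containing a bad point $x$ must have $a=x$, since otherwise the single set already covers $x\!\up$---is handled cleanly. The same one-element-cover trick then kills every immediate successor of $x$ outside $F$, forcing $\Imm_W(x)\subseteq F$. One small point you leave implicit but which is routine: the reduction to a cover by basic open sets is justified because any countable subcover of the refined cover yields a countable subcover of the original (each refined set sits inside some original member), so the refined cover also lacks a countable subcover.
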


Going forward we refer to an $\omega_1$-tree which is Lindel\"{o}f in the 
fine wedge topology as a \emph{Lindel\"{o}f tree}. 
Note that if an $\omega_1$-tree $T$ is finitely splitting, then the fine wedge 
topology on $T$ is discrete and hence $T$ is non-Lindel\"{o}f. 
Also, if $T$ has a cofinal branch, then $T$ is non-Lindel\"{o}f 
(\cite[Lemma 4.2]{marun}). 
Less trivial examples of non-Lindel\"{o}f trees are those 
which are Aronszajn and infinitely splitting. 
Marun observed that there exists a normal Aronszajn tree which is infinitely splitting 
and non-Lindel\"{o}f, and that any infinitely splitting Suslin tree is Lindel\"{o}f.

\begin{corollary}
	There exists a normal infinitely splitting non-Lindel\"{o}f special Aronszajn tree.
\end{corollary}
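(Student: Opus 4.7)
The plan is to combine Proposition 2.2 with Theorem 2.6; no new construction is needed. First, I would invoke Proposition 2.2 to obtain a special Aronszajn tree $U$ which is normal, infinitely splitting, and contains a normal uncountable downwards closed $2$-splitting subtree $W$. All four properties required by the corollary except non-Lindel\"{o}fness are then immediate from the statement of Proposition 2.2.

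Next, I would observe that $W$ witnesses exactly the criterion appearing on the right-hand side of Marun's characterization: $W$ is an uncountable downwards closed subtree of $U$, and being $2$-splitting it is in particular finitely splitting (every element of $W$ has $2 < \omega$ immediate successors in $W$). Therefore $U$ does contain an uncountable downwards closed finitely splitting subtree, so the contrapositive of Theorem 2.6 yields that $U$ is not Lindel\"{o}f in the fine wedge topology, i.e. $U$ is non-Lindel\"{o}f in the sense of the terminology introduced immediately after Theorem 2.6.

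There is no real obstacle here; the only thing to check is that the notion of ``finitely splitting subtree'' used in Theorem 2.6 matches the $2$-splitting subtree produced by Proposition 2.2, and this is immediate from the definitions given in the Background section (where $n$-splitting for a finite $n$ is a special case of finitely splitting). Thus the corollary is simply the conjunction of the two results and can be stated in one or two sentences.
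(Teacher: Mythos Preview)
Your proposal is correct and follows essentially the same approach as the paper: the paper's proof simply reads ``By Corollary 2.4 and Theorem 2.7,'' which is the same idea of invoking a previously constructed normal infinitely splitting special Aronszajn tree with a finitely splitting subtree and then applying Marun's characterization. Two minor points: the characterization you cite is Theorem 2.7 (Definition 2.6 is the fine wedge topology), and the paper invokes Corollary 2.4 rather than Proposition 2.2, but since only a single $2$-splitting subtree is needed your use of Proposition 2.2 is equally valid and arguably more direct.
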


\begin{proof}
	By Corollary 2.4 and Theorem 2.7.
\end{proof}

Marun proved that $\textsf{MA}_{\omega_1}$ implies that every $\omega_1$-tree 
is non-Lindel\"{o}f. 
A variation of his proof shows that $\textsf{MA}_{\omega_1}$ implies that for every 
$2 \le n < \omega$, every $(\ge \! n)$-splitting Aronszajn tree contains an 
uncountable downwards closed $n$-splitting subtree.

In this article, we address the following questions of Marun (\cite{marundiss}, \cite{marun}).

\begin{question}
	Is the topological square of a Lindel\"{o}f tree always Lindel\"{o}f?
\end{question}

\begin{question}
	Is it consistent with \textsf{CH} that every $\omega_1$-tree is non-Lindel\"{o}f?
\end{question}

\begin{question}
	Is it consistent that there exists a Suslin tree, and within the class of infinitely 
	splitting $\omega_1$-trees, being Suslin is equivalent to being Lindel\"{o}f?
\end{question}

Concerning the first question, we show in Section 9 that it is consistent that 
there exists an infinitely splitting $\omega_1$-tree which is Suslin, and 
hence Lindel\"{o}f, but its topological square is not Lindel\"{o}f. 
For the second question, we prove in Section 11 that it is consistent with 
\textsf{CH} that every $\omega_1$-tree is non-Lindel\"{o}f. 
Regarding the third question, we prove in Section 11 that it is consistent that there exist 
a normal infinitely splitting Suslin tree, 
and every infinitely splitting $\omega_1$-tree which does not contain a Suslin subtree 
is non-Lindel\"{o}f. 
By the next proposition, this is the best one can hope for.

\begin{proposition}
	Assume that there exists a Suslin tree. 
	Then there exists a normal Aronszajn tree which is infinitely splitting, 
	Lindel\"{o}f, and not Suslin.
\end{proposition}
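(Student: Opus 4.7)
The plan is to begin with the given Suslin tree and pass, via Lemma 1.1(b) and a routine normalization, to a normal, infinitely splitting Suslin tree $S$. From $S$, I would build $T$ by attaching a fresh Suslin cone above each level of $S$. Precisely, for each $\alpha < \omega_1$, I would fix an element $s_\alpha \in S_\alpha$ and a disjoint copy $X_\alpha$ of $S$ (with the $X_\alpha$'s pairwise disjoint and disjoint from $S$). Set $T = S \cup \bigsqcup_{\alpha < \omega_1} X_\alpha$ with the tree order that restricts to $<_S$ on $S$ and to the copied order on each $X_\alpha$, and which declares $y <_T z$ for $y \in S$ and $z \in X_\alpha$ exactly when $y \leq_S s_\alpha$. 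Thus the root $r_\alpha$ of $X_\alpha$ becomes a new immediate successor of $s_\alpha$ at $T$-level $\alpha + 1$.

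I would then verify the basic properties. Each level $T_\beta$ is the union of $S_\beta$ with the $X_\alpha$-contributions for $\alpha < \beta$, all of which are countable, so $T$ is an $\omega_1$-tree. Normality transfers from $S$ and the $X_\alpha$, since Hausdorffness at a limit $T$-level follows from Hausdorffness within the relevant piece. Infinite splitting is immediate: each $s \in S$ retains its $\omega$-many $S$-immediate-successors, and each $x \in X_\alpha$ retains its $\omega$-many $X_\alpha$-immediate-successors (which, because no $S$-node or $X_{\alpha'}$-node with $\alpha' \neq \alpha$ lies above an $X_\alpha$-node, coincide with its $T$-immediate-successors). A cofinal branch of $T$ would either lie entirely in $S$ or enter some $X_\alpha$ at level $\alpha + 1$ and become cofinal in $X_\alpha$, both impossible since $S$ and $X_\alpha$ are Aronszajn. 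The set $\{r_\alpha : \alpha < \omega_1\}$ is an uncountable antichain in $T$, because elements of distinct $X_\alpha$'s are $T$-incomparable, so $T$ is not Suslin.

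For the decisive property of Lindel\"{o}fness, I would use Theorem 2.7. Suppose, for contradiction, that $W \subseteq T$ is an uncountable downwards closed finitely splitting subtree. If $W \cap S$ is uncountable, Lemma 1.1(d)(iii) applied to the Suslin tree $S$ yields $s_0$ with $S_{s_0} \subseteq W$, and then the infinitely many $S$-immediate-successors of $s_0$, all lying in $S_{s_0} \subseteq W$, contradict finite splitting. Otherwise $W \cap S$ is bounded below some $\beta < \omega_1$; for each $\alpha$ with $W \cap X_\alpha \neq \emptyset$, taking any $z \in W \cap X_\alpha$ gives $s_\alpha \in W \cap S$ by downwards closure, forcing $\alpha < \beta$. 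Hence $W \setminus S$ is a union of at most countably many sets $W \cap X_\alpha$, so if all these were countable then $W$ itself would be countable. Thus some $W \cap X_\alpha$ is uncountable, and Lemma 1.1(d)(iii) applied to the Suslin tree $X_\alpha$ produces a cone $(X_\alpha)_{x_0} \subseteq W$, whose $\omega$-many immediate successors of $x_0$ in $X_\alpha$ are also $T$-immediate successors in $W$, again contradicting finite splitting.

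The main obstacle is the tension between being non-Suslin and being Lindel\"{o}f, since introducing uncountable antichains typically also introduces finitely splitting downwards closed subtrees. What makes the construction succeed is that each attached piece $X_\alpha$ is itself Suslin, so the only way a downwards closed $W$ can meet a given $X_\alpha$ in a large set is to swallow a whole Suslin cone inside $X_\alpha$, which is automatically infinitely splitting; and downwards closure through $s_\alpha$ forces $W$ to concentrate within countably many of the $X_\alpha$'s, one of which must then be uncountable, unless $W$ already has an infinitely splitting footprint in $S$. The step to double-check carefully is that immediate successors within a single piece coincide with $T$-immediate successors at that node, since this is what transports the Suslin-cone argument from a piece to the whole tree $T$.
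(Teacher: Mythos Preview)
Your argument is correct and follows the same overall strategy as the paper: start from a normal infinitely splitting Suslin tree $S$, glue on disjoint copies of $S$ to manufacture an uncountable antichain, and then use the Suslin property of each piece (via Lemma~1.1(d)(iii)) to rule out any uncountable downwards closed finitely splitting subtree. The implementations differ in two minor respects. First, the paper attaches each copy $S^\alpha$ at a \emph{limit} level $\alpha$, placing its root above a cofinal branch $b_\alpha$ of $S \res \alpha$ that has no upper bound in $S_\alpha$; you instead attach each $X_\alpha$ at a successor level, as a new immediate successor of a chosen element $s_\alpha \in S_\alpha$. Second, in the Lindel\"{o}f verification the paper first passes (via Lemma~1.1(a)) to a \emph{normal} subtree $U$ and then uses that $S^\alpha$ is upwards closed to conclude $U \cap S^\alpha$ is uncountable, whereas you avoid the normalization step and argue by pigeonhole: downwards closure forces $s_\alpha \in W \cap S$ whenever $W$ meets $X_\alpha$, so only countably many $X_\alpha$'s can be hit and one of them must be met uncountably. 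Both variants are equally valid; your Lindel\"{o}f step is arguably a touch more direct, while the paper's attachment leaves the immediate-successor structure of $S$ untouched.
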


\begin{proof}
	It is well-known that if there exists a Suslin tree, then there exists a 
	normal Suslin tree. 
	So by Lemma 1.1(b), if there exists a Suslin tree then there exists a normal 
	infinitely splitting Suslin tree, which we denote by $S$.

	For each limit ordinal $\alpha < \omega_1$ fix a cofinal branch $b_\alpha$ 
	of the tree $S \res \alpha$ which has no upper bound in $S_\alpha$. 
	This is possible since $S$ has countable levels yet $S \res \alpha$ has 
	$2^\omega$-many cofinal branches. 
	Now define a tree $T$ extending $S$ by placing above each $b_\alpha$ a copy of $S$, 
	which we denote by $S^\alpha$. 
	It is routine to check that $T$ is a normal $\omega_1$-tree which is 
	infinitely splitting. 
	If $b$ is a cofinal branch of $T$, then since $S$ has no cofinal branches, there 
	exists some limit ordinal $\alpha < \omega_1$ and some $x \in b \cap S^\alpha$. 
	But $S^\alpha$ is upwards closed in $T$, so $S^\alpha$ contains the uncountable 
	chain $x \up$, contradicting that $S^\alpha$ is Suslin. 
	So $T$ is Aronszajn. 
	For each limit ordinal $\alpha < \omega_1$ 
	let $x_\alpha$ be the root of $S^\alpha$, that is, $x_\alpha$ is the unique upper 
	bound of $b_\alpha$ on level $\alpha$ of $T$. 
	Note that $\{ x_\alpha : \alpha \in \Lim(\omega_1) \}$ 
	is an uncountable antichain of $T$, and hence $T$ is not Suslin.
	
	We claim that $T$ is Lindel\"{o}f. 
	By Theorem 2.7, it suffices to show that $T$ does not contain an uncountable 
	downwards closed finitely splitting subtree. 
	Suppose for a contradiction that $U$ is such a subtree. 
	By finding a subset of $U$ if necessary using Lemma 1.1(a), 
	we may assume without loss of generality that $U$ is normal. 
	\emph{Case 1:} $U \subseteq S$. Then $S$ is not Lindel\"{o}f, which contradicts that 
	$S$ is Suslin. 
	\emph{Case 2:} $U \not \subseteq S$. 
	Then there exists some limit ordinal $\alpha < \omega_1$ such that 
	$U \cap S^\alpha \ne \emptyset$. 
	Since $U$ is normal and $S^\alpha$ is upwards closed in $T$, it follows that 
	$U \cap S^\alpha$ is uncountable. 
	But then $U \cap S^\alpha$ witnesses that $S^\alpha$ is not Lindel\"{o}f, 
	which contradicts that $S^\alpha$ is Suslin.
\end{proof}

On the other hand, if there exists a Suslin tree, then 
there exists a normal infinitely splitting 
Aronszajn tree which contains a Suslin tree but is not Lindel\"{o}f. 
We leave the proof as an exercise for the interested reader.

\section{A Preservation Theorem for Iterated Forcing}

In this section we prove a preservation theorem for iterated forcing related to 
not adding uncountable small-splitting subtrees of a given $\omega_1$-tree. 
One of our main consistency results, proven in Section 11, 
is that for any $2 < n < \omega$, it is consistent 
that every $(\ge \! n)$-splitting normal Aronszajn tree contains an 
uncountable downwards closed normal $n$-splitting 
subtree, but there exists an infinitely splitting normal Aronszajn tree which contains 
no uncountable downwards closed $(< \! n)$-splitting subtree. 
In Section 6 we introduce a forcing for adding such an $n$-splitting subtree, 
and in Section 7 we show that this forcing does not add a $(< \! n)$-splitting subtree to 
any $\omega_1$-tree which did not already contain one. 
But in order to obtain the above consistency result, we need to be able to iterate 
such forcings while preserving these properties. 
To show that this can be done is the purpose of the next theorem.

\begin{thm}
	Let $T$ be an $\omega_1$-tree and let $1 < n \le \omega$. 
	Assume that $T$ does not contain an 
	uncountable downwards closed $(< \! n)$-splitting subtree. 
	Suppose that 
	$\langle \p_i, \dot \q_j : i \le \beta, \ j < \beta \rangle$ 
	is a countable support forcing iteration such that for all $i < \beta$, 
	$\p_i$ forces that $\dot \q_i$ 
	is proper and $\dot \q_i$ forces that $T$ does not contain 
	an uncountable downwards closed 
	$(< \! n)$-splitting subtree. 
	Then $\p_\beta$ is proper and forces that $T$ does not contain 
	an uncountable downwards closed $(< \! n)$-splitting subtree. 	
\end{thm}

The proof of Theorem 3.1 is an elaboration of a proof of Shelah's theorem on the 
preservation of properness under countable support forcing iterations. 
We use the following standard fact from the theory of iterations of proper forcing. 
For a proof of this fact, see \cite[Lemma 31.17]{jechbook}. 

\begin{lemma}
	Let $\langle \p_i, \dot \q_j : i \le \delta, \ j < \delta \rangle$ be a countable 
	support forcing iteration of proper forcings.  
	Let $\lambda$ be a large enough regular cardinal and let $N$ be a countable 
	elementary substructure of $H(\lambda)$ containing the 
	forcing iteration as a member. 
	Assume the following:
	\begin{itemize}
	\item $\alpha \in N \cap \delta$;
	\item $\dot p$ is a $\p_{\alpha}$-name for a condition in $\p_\delta$;
	\item $q$ is an $(N,\p_{\alpha})$-generic condition;
	\item $q \Vdash_\alpha (\dot p \in N \ \land \ 
	\dot p \res \alpha \in \dot G_{\alpha})$.
	\end{itemize}
	Then there exists some $q^+ \in \p_\delta$ such that 
	$q^+ \res \alpha = q$, $q^+$ is $(N,\p_\delta)$-generic, and 
	$q^+ \Vdash_\delta \dot p \in \dot G_\delta$.
\end{lemma}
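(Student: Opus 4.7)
The plan is to prove the lemma by induction on $\delta$. The base case $\delta = \alpha$ is trivial: take $q^+ = q$, since the hypothesis $q \Vdash_\alpha \dot p \res \alpha \in \dot G_\alpha$ together with $\dot p = \dot p \res \alpha$ gives the conclusion. For the successor case $\delta = \beta + 1$, I would apply the inductive hypothesis to the pair $(\alpha,\beta)$ with the name $\dot p \res \beta$, obtaining an $(N, \p_\beta)$-generic condition $q'$ with $q' \res \alpha = q$ that forces $\dot p \res \beta \in \dot G_\beta$. Then, working in $V^{\p_\beta}$, since $N[\dot G_\beta]$ is a countable elementary submodel containing $\dot p(\beta)$, properness of $\dot \q_\beta$ produces an $(N[\dot G_\beta], \dot \q_\beta)$-generic condition extending $\dot p(\beta)$; take a name $\dot r$ for such a condition and set $q^+ = q' \con \dot r$.

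The substantive case is $\delta$ a limit ordinal. Let $\delta' = \sup(N \cap \delta) \le \delta$. Since $N$ is countable, $\cf(\delta') = \omega$, and $N \cap \delta$ is cofinal in $\delta'$, so I can fix an increasing sequence $\langle \alpha_n : n < \omega \rangle$ cofinal in $\delta'$ with $\alpha_0 = \alpha$ and each $\alpha_n \in N$. Enumerate the dense open subsets of $\p_\delta$ that lie in $N$ as $\langle D_n : n < \omega \rangle$. I would recursively build conditions $q_n \in \p_{\alpha_n}$ and $\p_{\alpha_n}$-names $\dot p_n$ for elements of $\p_\delta \cap N$ so that $q_0 = q$, $\dot p_0 = \dot p$, each $q_n$ is $(N, \p_{\alpha_n})$-generic, $q_n \res \alpha_m = q_m$ for $m \le n$, and $q_n \Vdash_{\alpha_n} \dot p_n \res \alpha_n \in \dot G_{\alpha_n}$, with $q_n \Vdash \dot p_{n+1} \le \dot p_n$ and $\dot p_{n+1} \in D_n$. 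At stage $n$, use elementarity of $N$ (applied in $V^{\p_{\alpha_n}}$ via $N[\dot G_{\alpha_n}]$) to pick such a $\dot p_{n+1}$, then apply the inductive hypothesis at $\alpha_{n+1}$ with $(q_n, \dot p_{n+1}, \alpha_n)$ in the role of $(q, \dot p, \alpha)$ to produce $q_{n+1}$.

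To finish, define $q^* \in \p_{\delta'}$ by $q^*(\beta) = q_n(\beta)$ for any $n$ with $\beta < \alpha_n$; its support $\bigcup_n \supp(q_n)$ is a countable subset of $N$, so under countable support $q^*$ is a legitimate condition. If $\delta' = \delta$ set $q^+ = q^*$; otherwise extend by the trivial condition on $[\delta',\delta)$. Because every element of $\p_\delta \cap N$, and in particular every dense open $D \in N$ and the name $\dot p$, has support contained in $N \cap \delta \subseteq \delta'$, the $(N,\p_{\delta'})$-genericity of $q^*$ lifts to $(N,\p_\delta)$-genericity of $q^+$; and since $q^+$ meets each $D_n$ via $\dot p_{n+1}$ and the chain $\dot p_{n+1} \le \dot p_n$ begins at $\dot p_0 = \dot p$, we get $q^+ \Vdash_\delta \dot p \in \dot G_\delta$.

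The main obstacle is the fusion step at limits: one must simultaneously preserve $N$-genericity through the chain, keep the names $\dot p_n$ inside $N$ while forcing successively more of them into the generic filter, and ensure the limit object is actually a condition. Countable support is essential both to guarantee that $q^*$ has countable support (hence belongs to $\p_{\delta'}$) and to reduce $N$-genericity for $\p_\delta$ to $N$-genericity for $\p_{\delta'}$ in the case $\delta \notin N$ or $\cf(\delta) > \omega$.
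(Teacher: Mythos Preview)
Your proposal is correct and follows the standard induction-plus-fusion argument for this lemma. Note that the paper does not actually prove Lemma~3.2; it cites \cite[Lemma 31.17]{jechbook} for the proof, and what you have sketched is precisely that standard argument (modulo the routine care of writing $\dot p_{n+1} \res \alpha_{n+1}$ when invoking the inductive hypothesis at $\alpha_{n+1}$).
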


\begin{proof}[Proof of Theorem 3.1]
	We prove the statement by induction on $\beta$. 
	Fix $T$, $n$, and 
	$\langle \p_i, \dot \q_j : i \le \beta, \ j < \beta \rangle$ 
	as in the theorem. 
	Without loss of generality we may assume that for all $\gamma \le \beta$, 
	$\p_\gamma$ is separative, for otherwise there exists an equivalent 
	forcing iteration which has this property (this step is not necessary, but 
	it eliminates the need for an additional lemma).

	The cases in which $\beta$ is equal to $0$ or is a successor ordinal are immediate. 
	So assume that $\beta$ is a limit ordinal. 
	By the inductive hypothesis, for all $\gamma < \beta$, $\p_\gamma$ is proper and 
	forces that $T$ does not contain an uncountable downwards closed 
	$(< \! n)$-splitting subtree.
	
	Suppose that $p \in \p_\beta$ and $p$ forces that $\dot U$ is a 
	downwards closed $(< \! n)$-splitting subtree of $T$. 
	We find an extension of $p$ which forces that $\dot U$ is countable.

	\underline{Claim:} Suppose that $D$ is a dense open subset of $\p_\beta$, 
	$p^* \le_\beta p$, and $\alpha < \beta$. 
	Then $\p_\alpha$ forces that, if $p^* \res \alpha \in \dot G_\alpha$, then   
	there exists some $\gamma < \omega_1$ so that for all $x \in T_\gamma$, 
	there exists $q_{x} \le_\beta p^*$ in $D$ such that 
	$q_{x} \res \alpha \in \dot G_\alpha$ and 
	$q_{x} \Vdash^V_\beta x \notin \dot U$.

	\emph{Proof.} 
	We prove that any $r \in \p_\alpha$ has an extension in $\p_\alpha$ 
	which forces the conclusion of the claim. 
	So let $r \in \p_\alpha$ be given. 
	Extending $r$ further if necessary, we may assume that $r$ decides whether or 
	not $p^* \res \alpha \in \dot G_\alpha$. 
	If it decides not, then we are done. 
	Otherwise, by separativity $r \le_\alpha p^* \res \alpha$. 
	Let $r^* = r \cup p^* \res [\alpha,\beta)$, 
	which is in $\p_\beta$ and extends $p^*$. 
	Since $D$ is dense in $\p_\beta$, we can fix some $u \le_\beta r^*$ in $D$. 
	Then $u \res \alpha \le_\alpha r$.

	We argue that $u \res \alpha$ is as required. 	
	So let $G_\alpha$ be a generic filter on $\p_\alpha$ which contains $u \res \alpha$. 
	In $V[G_\alpha]$, let $\p_\beta / G_\alpha$ be the suborder of $\p_\alpha$ consisting 
	of those $q \in \p_\beta$ such that $q \res \alpha \in G_\alpha$. 
	So $u \in \p_\beta / G_\alpha$.  
	Recall that in $V$, $\p_\beta$ is forcing equivalent to 
	$\p_\alpha * (\p_\beta / \dot G_\alpha)$. 
	So it makes sense to define in the model $V[G_\alpha]$ 
	$$
	W = \{ x \in T : u \Vdash_{\p_\beta / G_\alpha} x \in \dot U \}.
	$$
	Clearly, $W$ is a downwards closed subtree of $T$ and 
	$u \Vdash_{\p_\beta / G_\alpha} W \subseteq \dot U$. 
	Since $u \le_\beta p$ and $p$ forces that $\dot U$ is $(< \! n)$-splitting, 
	$W$ must be $(< \! n)$-splitting.

	By the inductive hypothesis, $T$ does not contain an uncountable downwards closed 
	$(< \! n)$-splitting subtree in $V[G_\alpha]$. 
	Therefore, $W$ must be countable. 
	Fix some $\gamma < \omega_1$ such that $W \subseteq T \res \gamma$. 
	Now consider any $x \in T_\gamma$. 
	Then $x \notin W$, so by the definition of $W$ 
	there exists some $p_x \le_\beta u$ in $\p_\beta / G_\alpha$ 
	such that $p_x \Vdash_{\p_\beta / G_\alpha} x \notin \dot U$. 
	Since $p_x \in \p_\beta / G_\alpha$, $p_x \res \alpha \in G_\alpha$. 
	Find $v_x \in G_\alpha$ which forces in $\p_\alpha$ over $V$ that 
	$p_x \in \p_\beta / \dot G_\alpha$ and 
	$p_x \Vdash_{\p_\beta / \dot G_\alpha}^{V[\dot G_\alpha]} x \notin \dot U$. 
	By separativity, $v_x \le_\alpha p_x \res \alpha$. 
	Now let $q_x = v_x \cup p_x \res [\alpha,\beta)$. 
	Then $q_x \le_\beta p_x \le_\beta u$. 
	It is easy to check that $q_x \Vdash^V_\beta x \notin \dot U$. 
	As $u \in D$ and $D$ is open, $q_x \in D$. $\dashv_{\ \text{Claim}}$

	We now work towards finding a condition $q \le_\beta p$ which forces that 
	$\dot U$ is countable. 
	Let $\lambda$ be a large enough regular cardinal and let $N \prec H(\lambda)$ 
	be countable such that $N$ contains as members the objects 
	$\langle \p_i, \dot \q_j : i \le \beta, \ j < \beta \rangle$, 
	$p$, $T$, and $\dot U$. 
	Fix the following objects:
	\begin{itemize}
	\item an enumeration $\langle D_m : m < \omega \rangle$ of all dense open 
	subsets of $\p_\beta$ which lie in $N$;
	\item an enumeration $\langle x_m : m < \omega \rangle$ of $T_{N \cap \omega_1}$;
	\item an increasing sequence $\langle \alpha_m : m < \omega \rangle$ of ordinals 
	in $N \cap \beta$ which is cofinal in $\sup(N \cap \beta)$, where $\alpha_0 = 0$.
	\end{itemize}

	We define by induction sequences $\langle \dot p_m : m < \omega \rangle$ and 
	$\langle q_m : m < \omega \rangle$ satisfying that for all $m < \omega$:
	\begin{enumerate}
		\item $\dot p_m$ is a $\p_{\alpha_m}$-name for a condition 
		in $\p_\beta$, and $\dot p_0$ is a $\p_0$-name for $p$;
		\item $q_m$ is an $(N,\p_{\alpha_m})$-generic condition  
		and $q_m \Vdash_{\alpha_m} (\dot p_m \in N \ \land \ 
		\dot p_m \res \alpha_m \in \dot G_{\alpha_m})$;
		\item $q_{m+1} \res \alpha_m = q_m$, and 
		$q_{m+1}$ forces in $\p_{\alpha_{m+1}}$ that:
		\begin{enumerate}
			\item $\dot p_{m+1} \le_\beta \dot p_m$;
			\item $\dot p_{m+1} \in D_m$;
			\item $\dot p_{m+1} \Vdash^V_\beta x_m \notin \dot U$.
		\end{enumerate}
	\end{enumerate}
	Note that the statements above imply that for all $m < \omega$, 
	$q_m \Vdash_{\alpha_m} \dot p_m \le_\beta p$.

	Begin by letting $q_0$ be the empty condition in $\p_0$ and letting 
	$\dot p_0$ be a $\p_0$-name for $p$. 
	Now let $m < \omega$ and assume that $\dot p_m$ and $q_m$ are defined as described. 
	Note that properties (1) and (2) imply that the assumptions of Lemma 3.2 hold, where 
	we let $\delta = \alpha_{m+1}$, 
	$\alpha = \alpha_m$, $\dot p = \dot p_m \res \alpha_{m+1}$, and $q = q_m$. 
	So fix $q_{m+1} \in \p_{\alpha_{m+1}}$ such that 
	$q_{m+1} \res \alpha_m = q_{m}$, 
	$q_{m+1}$ is $(N,\p_{\alpha_{m+1}})$-generic, 
	and $q_{m+1} \Vdash_{\alpha_{m+1}} 
	\dot p_m \res \alpha_{m+1} \in \dot G_{\alpha_{m+1}}$.

	To define the $\p_{\alpha_{m+1}}$-name $\dot p_{m+1}$, 
	consider a generic filter 
	$G_{\alpha_{m+1}}$ for $\p_{\alpha_{m+1}}$ which contains $q_{m+1}$. 
	Since $q_{m+1}$ is $(N,\p_{\alpha_{m+1}})$-generic, 
	$N[G_{\alpha_{m+1}}] \cap V = N$. 
	Let $G_{\alpha_m} = G_{\alpha_{m+1}} \res \alpha_m$. 
	Since $q_{m+1} \res \alpha_{m} = q_m$, $q_m \in G_{\alpha_m}$. 
	Let $p_m = \dot p_m^{G_{\alpha_m}}$. 
	Then $p_m \in N$ by (2). 
	By the choice of $q_{m+1}$, 
	$p_m \res \alpha_{m+1} \in G_{\alpha_{m+1}}$. 
	Also, $p_m \le_\beta p$. 
	Applying the claim for $D = D_m$, $p^* = p_m$, and $\alpha = \alpha_{m+1}$, 
	in $V[G_{\alpha_{m+1}}]$ 
	there exists some $\gamma < \omega_1$ so that 
	for all $x \in T_\gamma$, there exists $q_x \le_\beta p_m$ 
	in $D_m$ such that $q_x \res \alpha_{m+1} \in G_{\alpha_{m+1}}$ 
	and $q_x \Vdash^V_\beta x \notin \dot U$. 
	Since the relevant parameters are in $N[G_{\alpha_{m+1}}]$, 
	we can assume that $\gamma \in N[G_{\alpha_{m+1}}] \cap \omega_1 = N \cap \omega_1$.
	
	Letting $x = x_m \res \gamma$, we can find a condition 
	$p_{m+1} \le_\beta p_m$ in $D_m$ such that 
	$p_{m+1} \res \alpha_{m+1} \in G_{\alpha_{m+1}}$ 
	and $p_{m+1} \Vdash^V_\beta x_m \res \gamma \notin \dot U$. 
	By elementarity, we may assume that 
	$p_{m+1} \in N[G_{\alpha_{m+1}}]$. 
	But $N[G_{\alpha_{m+1}}] \cap V = N$, so $p_{m+1} \in N$. 
	In summary, $p_{m+1} \le_\beta p_m$, $p_{m+1} \in N \cap D_m$, 
	$p_{m+1} \res \alpha_{m+1} \in G_{\alpha_{m+1}}$, and 
	$p_{m+1} \Vdash^V_\beta x_m \res \gamma \notin \dot U$.
	Since $\dot U$ is forced to be downwards closed, 
	$p_{m+1} \Vdash^V_\beta x_m \notin \dot U$. 
	Now let $\dot p_{m+1}$ be a $\p_{\alpha_{m+1}}$-name for a member of $\p_\beta$ 
	which is forced to 
	satisfy the above properties in the case that $q_{m+1} \in \dot G_{\alpha_{m+1}}$.

	This completes the construction. 
	Let $q = \bigcup_m q_m$, which is in $\p_{\beta}$. 
	We argue that $q \le_\beta p$, $q$ is $(N,\p_\beta)$-generic, 
	and $q$ forces that $\dot U$ is countable. 
	Fix a generic filter $G_\beta$ for $\p_\beta$ which contains $q$ 
	and let $U = \dot U^{G_\beta}$. 
	For each $m < \omega$ let 
	$G_{\alpha_m} = G_\beta \res \alpha_m$ and 
	$p_m = \dot p_m^{G_{\alpha_m}}$. 
	
	We claim that for all $m < \omega$, $q \le_\beta p_m$, and 
	in particular, $q \le_\beta p_0 = p$. 
	Since $q \res \alpha_m = q_m$, 
	$q_m \Vdash_{\alpha_m} \dot p_m \res \alpha_m \in \dot G_{\alpha_m}$, 
	and $\p_{\alpha_m}$ is separative, it follows that 
	$q \res \alpha_m \le_{\alpha_m} p_m \res \alpha_m$. 
	Hence, for all $m < k < \omega$, since $p_k \le_\beta p_m$ we have that 
	$q \res \alpha_k \le_{\alpha_k} p_k \res \alpha_k \le_{\alpha_k} p_m \res \alpha_k$. 
	Now $\dom(q) \subseteq \bigcup_k \alpha_k = \sup(N \cap \beta)$, 
	and since $p_m \in N$, also $\dom(p_m) \subseteq \sup(N \cap \beta)$. 
	So in fact for all $\delta < \beta$, $q \res \delta \le_\delta p_m \res \delta$, 
	which implies that $q \le_\beta p_m$. 
	
	Since $G_\beta$ was an arbitrary generic filter which contains $q$, 
	$q$ forces in $\p_\beta$ 
	that for all $m < \omega$, $q \le_\beta \dot p_m$, and hence that 
	$\dot p_m \in \dot G_\beta \cap N$. 
	So by (3), $q$ forces in $\p_\beta$ that for all $m < \omega$, 
	$\dot p_{m+1} \in D_m \cap N \cap \dot G_\beta$. 
	This proves that $q$ is $(N,\p_\beta)$-generic. 
	Also by (3), $q$ forces in $\p_\beta$ that for all $m < \omega$, 
	$p_{m+1}$ is a condition in $\dot G_\beta$ 
	which forces over $V$ that $x_m \notin \dot U$, 
	and therefore in fact $x_m \notin \dot U$. 
	Since $\dot U$ is forced to be downwards closed, 
	$q$ forces that $\dot U \subseteq T \res (N \cap \omega_1)$ 
	and hence $\dot U$ is countable.
\end{proof}

\section{Promises and Generalized Promises}

The forcing poset for adding an $n$-splitting subtree of a given Aronszajn tree, 
which is introduced in Section 6, consists of conditions with two components. 
The first component of a condition 
is a countable approximation of the generic subtree, and the second 
component is a countable collections of 
objects which we call \emph{generalized promises} which constrains 
the growth of the first component to ensure that the forcing is well-behaved.  
In this section we describe promises and generalized promises and adapt 
a result about promises due to Jensen to the context of this article.

\begin{definition}[Promises]
	Let $T$ be an $\omega_1$-tree. 
	For any positive $m < \omega$ and $\gamma < \omega_1$, 
	a \emph{promise on $T$ with dimension $m$ and base level $\gamma$} 
	is an uncountable downwards closed subtree $U$ 
	of some derived tree $T_{\vec x}$ of $T$ with dimension $m$ and each 
	member of $\vec x$ having height $\gamma$ 
	such that every element of $U$ has uncountably many 
	elements above it in $U$.
\end{definition}

Note that if $T$ is normal, then so is any promise on $T$.

\begin{lemma}
	Let $T$ be an $\omega_1$-tree.
	Suppose that $W$ is an uncountable downwards closed subtree of some 
	derived tree $T_{\vec x}$ of $T$. 
	Then $W$ contains a promise with root $\vec x$.
\end{lemma}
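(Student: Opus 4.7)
The plan is to run on $W$ the same pruning construction that underlies Lemma~1.1(a), and then to check that the root $\vec x$ of $T_{\vec x}$ survives the pruning. Writing $W_{\vec y} := \{\vec z \in W : \vec y \le \vec z\}$ for the cone of $\vec y$ in $W$, I would set
$$
U = \{ \vec y \in W : W_{\vec y} \text{ is uncountable} \}
$$
and argue that $U$ is the promise we want.

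The structural properties are immediate. Downward closure of $U$ in $T_{\vec x}$ follows from the downward closure of $W$ in $T_{\vec x}$ together with the observation that passing to a predecessor of $\vec y$ can only enlarge its cone in $W$. Moreover $\vec x \in U$: since $W$ is a nonempty downwards closed subtree of $T_{\vec x}$ it contains $\vec x$, and $W_{\vec x} = W$ is uncountable by hypothesis. Thus $U$ is automatically a downwards closed subtree of $T_{\vec x}$ of dimension $m$ with root $\vec x$ and base level $\gamma$; what remains is the promise property.

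The one genuine step is to show that for every $\vec y \in U$, the set $U \cap W_{\vec y}$ is uncountable; specializing to $\vec y = \vec x$ will give that $U$ itself is uncountable, and the general case gives exactly the promise property that every node of $U$ has uncountably many $U$-nodes above it. This is the usual counting argument by contradiction: if $U \cap W_{\vec y}$ were contained in $W \res \alpha$ for some $\alpha < \omega_1$, then every $\vec w \in (W_{\vec y})_\alpha$ would lie outside $U$, hence $W_{\vec w}$ would be countable. Since $(W_{\vec y})_\alpha$ is countable (levels of $T_{\vec x}$ being countable), every element of $W_{\vec y}$ at height at least $\alpha$ would lie in one of countably many countable cones $W_{\vec w}$; combined with the countable portion $W_{\vec y} \res \alpha$, this would make $W_{\vec y}$ itself countable, contradicting $\vec y \in U$. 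I do not anticipate any real obstacle; the only mildly annoying point is the purely cosmetic matter that the heights in $T_{\vec x}$ start at $\gamma$ rather than at $0$.
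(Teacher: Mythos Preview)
Your proposal is correct and is essentially the paper's approach: the paper simply cites Lemma~1.1(a), whose standard proof is exactly the pruning construction you spell out. The only additional observation needed beyond Lemma~1.1(a) is that the root $\vec x$ survives pruning, which you note explicitly.
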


\begin{proof}
	By Lemma 1.1(a).
\end{proof}

We need the following result of Jensen (originally stated without the use of the 
word ``promise'').

\begin{proposition}[{\cite[Chapter 6, Lemma 7]{devlinj}}]
	Let $T$ be an Aronszajn tree and let $U$ be a promise 
	on $T$ with dimension $m$ and base level $\gamma$. 
	Then for any countable ordinal $\alpha \ge \gamma$ and for all 
	$\vec a \in U \cap T_\alpha^m$, there exists a countable ordinal $\beta > \alpha$ 
	and a sequence 
	$\langle \vec b_k : k < \omega \rangle$ consisting of pairwise disjoint 
	tuples in $U \cap T_\beta^m$ above $\vec a$.
\end{proposition}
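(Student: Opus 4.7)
The plan is to argue by contradiction. Suppose $\vec a \in U \cap T_\alpha^m$ is a counterexample: no level $\beta > \alpha$ admits $\omega$-many pairwise disjoint tuples in $U$ extending $\vec a$. Two elementary observations will be needed throughout. First, every element $\vec c \in U$ has an extension in $U$ at every level $\beta \ge \h(\vec c)$, because the set of heights at which $\vec c$ has an extension is uncountable by the definition of promise, hence cofinal in $\omega_1$, so downward closure of $U$ supplies extensions at the intermediate levels. Second, a collection of $m$-tuples has no infinite pairwise disjoint subfamily if and only if it admits a finite ``hitting set'' $F \subseteq T$ meeting every tuple; the nontrivial direction is by greedy choice, extending a pairwise disjoint family using any tuple disjoint from the union of the previously chosen components.

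The next step is to define $f(\beta) = \max\{k : \text{there exist pairwise disjoint } \vec d_1,\ldots,\vec d_k \in U_{\vec a} \cap T_\beta^m\}$. By the hitting-set characterization and our standing assumption, $f(\beta) < \omega$ for every $\beta$. The function $f$ is non-decreasing: pairwise disjoint tuples at level $\beta$ extend to pairwise disjoint tuples at any higher level, since two extensions sharing a component would force the originals to share at the same position by the tree order. A limit-level argument will rule out $f(\beta) \to \omega$: if $f(\beta_n) \ge n$ along an increasing sequence $\beta_n \nearrow \beta^* < \omega_1$, then extending the witness families up to $\beta^*$ yields $f(\beta^*) \ge n$ for every $n$, contradicting $f(\beta^*) < \omega$. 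So $f$ is bounded by some finite $M$ and equals $M$ beyond some level $\beta^*$.

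Fixing pairwise disjoint $\vec c_1,\ldots,\vec c_M$ in $U_{\vec a} \cap T_{\beta^*}^m$, I would then observe that for each $\beta \ge \beta^*$ and any choice of extensions $\vec c_i^\beta \in U_{\vec c_i} \cap T_\beta^m$, the $\vec c_i^\beta$'s form a maximal pairwise disjoint family at level $\beta$; consequently any $\vec d \in U_{\vec c_i} \cap T_\beta^m$ must share a component with some $\vec c_k^\beta$, and sharing with $k \ne i$ is ruled out by the disjointness of $\vec c_i, \vec c_k$ at level $\beta^*$ (which would force $c_{i,j}$ and $c_{k,j}$ to be comparable for some $j$). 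Hence $\vec d$ must share with $\vec c_i^\beta$. It follows that no level $\beta > \beta^*$ admits two pairwise disjoint extensions of any fixed $\vec c_i$ in $U$, since those together with the $\vec c_k^\beta$'s for $k \ne i$ would form a pairwise disjoint family of size $M+1$, contradicting $f(\beta) = M$. Fixing such $i$ and writing $\vec c := \vec c_i$, the problem reduces to the case in which no two tuples in $U$ above $\vec c$ at a common level are pairwise disjoint.

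The hard part will be the final step: deducing a cofinal branch of $T$ from this sharing property, contradicting that $T$ is Aronszajn. The idea is that for each $\beta > \h(\vec c)$, the $m$ components of any single extension $\vec c^\beta \in U_{\vec c} \cap T_\beta^m$ form a hitting set for $U_{\vec c} \cap T_\beta^m$, so the minimum hitting set size is bounded by $m$ uniformly in $\beta$. Moreover, restricting a minimum hitting set $H(\beta)$ down to any lower level $\gamma$ yields a hitting set there, so by minimality the restriction is injective on a cofinal set of levels where the minimum hitting set size has stabilized. A pressing-down / delta-system argument on $\omega_1$ should then coherently select minimum hitting sets so that their elements assemble into finitely many coherent chains in $T$, each cofinal in $\omega_1$, producing a cofinal branch of $T$ and the desired contradiction. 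Carrying out this coherent choice across all levels --- not merely stabilizing on a stationary subset --- is the most delicate part of the argument.
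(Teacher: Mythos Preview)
The paper does not supply a proof of this proposition; it is quoted from Devlin--Johnsbr\aa ten and used as a black box. So there is no ``paper's proof'' to compare against, and I assess your plan on its own merits.

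Everything through the reduction to a single tuple $\vec c$ above which no two extensions in $U$ at a common level are disjoint is correct and cleanly argued: the monotonicity and boundedness of $f$, its stabilization at $M$, and the passage from the maximal disjoint family $\vec c_1,\ldots,\vec c_M$ to a single $\vec c_i$ are all fine. Your observation that the components of any one extension form a hitting set (so the minimum hitting-set size $n(\beta)$ is at most $m$ and stabilizes at some $n^*$) is also correct.

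The gap is in the last paragraph. You claim a ``pressing-down / $\Delta$-system argument'' will coherently select minimum hitting sets $H(\beta)$ whose elements align into finitely many cofinal chains of $T$. But the collection of minimum hitting sets, ordered by $H<H'$ iff $H=H'\res\h(H)$, is an $\omega_1$-tree with countable levels that embeds into a finite power of $T$; since $T$ is Aronszajn, so is every such power, and therefore this tree of hitting sets can itself be Aronszajn. Pressing down only stabilizes $H(\beta)\res\beta_0$ on an uncountable set $A$, but for $\beta<\beta'$ in $A$ there is no reason $H(\beta')\res\beta$ equals $H(\beta)$, so the elements above a fixed $h_0\in H(\beta_0)$ need not form a chain. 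Iterating the stabilization transfinitely fails at limit stages for the same reason: nothing guarantees an upper bound in the hitting-set tree for the chain built so far. A $\Delta$-system argument fares no better, since the $H(\beta)$'s live at different levels of $T$ and are pairwise disjoint as sets.

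The step can be completed, but by a different route. One option is induction on the dimension $m$: for $m=1$ the condition ``no two disjoint extensions'' forces a unique extension at every level and hence a branch of $T$ outright; for $m\ge 2$ one exploits that two extensions in a derived tree must share a component at the \emph{same coordinate}, and a short case analysis (for each level, either some coordinate is constant across all extensions there, or one can pass to a smaller-dimensional situation) eventually produces a fixed coordinate $j$ whose value is constant across all extensions on a cofinal set of levels, and these values cohere into a branch. You should replace the pressing-down sketch with an argument of this kind.
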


\begin{definition}
	Let $T$ be an $\omega_1$-tree, let 
	$\alpha < \beta < \omega_1$, and let $X \subseteq T_\beta$ be finite. 
	We say that $X$ has \emph{unique drop-downs to $\alpha$} 
	if the map $x \mapsto x \res \alpha$ is injective on $X$. 
	More generally, for any $1 \le n < \omega$ 
	we say that $X$ has \emph{$\len$-to-one drop-downs to $\alpha$} if 
	for all $y \in T_\alpha$, the set $\{ x \in X : x \res \alpha = y \}$ has 
	size at most $n$.
\end{definition}

Note that $(\le \! 1)$-to-one drop-downs is equivalent to unique drop-downs.

The following lemma is easy.

\begin{lemma}
	Let $T$ be an $\omega_1$-tree, let $n < \omega$ be positive, 
	let $\alpha < \gamma < \beta < \omega_1$, 
	and let $X \subseteq T_\beta$ be finite. 
	If $X$ has $\len$-to-one drop-downs to $\alpha$, then 
	$X$ has $\len$-to-one drop-downs to $\gamma$ and 
	$X \res \gamma$ has $\len$-to-one drop-downs to $\alpha$. 
	If $X$ has unique drop-downs to $\gamma$ and $X \res \gamma$ has 
	$\len$-to-one drop-downs to $\alpha$, then $X$ has $\len$-to-one dropdowns to $\alpha$.
\end{lemma}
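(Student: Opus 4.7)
The plan is to prove both implications by analyzing the fibers of the drop-down maps $x \mapsto x \res \alpha$ and $x \mapsto x \res \gamma$, using the basic fact that for any $x \in T_\beta$, $(x \res \gamma) \res \alpha = x \res \alpha$, which follows because $T$ is a tree and $\alpha < \gamma < \beta$.

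For the first implication, I would assume $X$ has $\len$-to-one drop-downs to $\alpha$ and handle the two conclusions separately. For drop-downs from $X$ to $\gamma$: if $y \in T_\gamma$, then the fiber $\{x \in X : x \res \gamma = y\}$ is contained in the fiber $\{x \in X : x \res \alpha = y \res \alpha\}$ over $y \res \alpha \in T_\alpha$, hence has size at most $n$. For drop-downs from $X \res \gamma$ to $\alpha$: given $y \in T_\alpha$, for each $z \in X \res \gamma$ with $z \res \alpha = y$ I would choose a witness $x_z \in X$ with $x_z \res \gamma = z$, so that $x_z \res \alpha = y$. Distinct $z$'s produce distinct $x_z$'s, embedding $\{z \in X \res \gamma : z \res \alpha = y\}$ into $\{x \in X : x \res \alpha = y\}$, which has size at most $n$.

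For the second implication, I would assume $X$ has unique drop-downs to $\gamma$ (so $x \mapsto x \res \gamma$ is injective on $X$) and $X \res \gamma$ has $\len$-to-one drop-downs to $\alpha$. Given $y \in T_\alpha$, the injective map $x \mapsto x \res \gamma$ sends $\{x \in X : x \res \alpha = y\}$ injectively into $\{z \in X \res \gamma : z \res \alpha = y\}$, the latter having size at most $n$ by hypothesis, which gives the desired bound.

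There is no real obstacle here; the content is purely combinatorial bookkeeping about how fibers of the $\res \alpha$ and $\res \gamma$ maps interact via the identity $(x \res \gamma) \res \alpha = x \res \alpha$, together with the observation that uniqueness of drop-downs is exactly injectivity of the relevant restriction map.
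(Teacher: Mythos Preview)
Your proposal is correct; the paper itself gives no proof of this lemma, merely introducing it with ``The following lemma is easy.'' Your fiber-counting argument via the identity $(x \res \gamma) \res \alpha = x \res \alpha$ is exactly the natural way to verify it.
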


The next lemma adapts Proposition 4.3 to the context of 
this article and is essential to adding generalized 
promises to conditions in the forcing introduced in Section 6.

\begin{lemma}
	Let $T$ be an Aronszajn tree and suppose that $U$ is a promise on $T$ 
	with dimension $m$ and base level $\beta$. 
	Then:
	\begin{itemize}
	\item there exists an increasing sequence $\langle \delta_n : n < \omega \rangle$ 
	of countable ordinals with supremum $\delta$, where $\delta_0 > \beta$;
	\item there exists a sequence 
	$\langle \vec b_n : n < \omega \rangle$ of pairwise disjoint 
	tuples in $U \cap T_\delta^m$, where we let 
	$X_n$ be the elements of $\vec b_n$ for all $n < \omega$;
	\item for all $0 < k < \omega$, 
	$X_0 \cup \cdots \cup X_{k}$ has unique drop-downs to $\delta_{k-1}$;
	\item $X_0 \cup X_1$ has $(\le \! 2)$-to-one drop-downs to $\beta$ and 
	for all $1 < k < \omega$, $X_0 \cup \cdots \cup X_k$ has 
	$(\le \! 2)$-to-one drop-downs to $\delta_{k-2}$;
	\item for all positive $k < \omega$, 
	$(\bigcup_n X_n) \res \delta_{k-1} = 
	(X_0 \cup \cdots \cup X_k) \res \delta_{k-1}$.
	\end{itemize}
\end{lemma}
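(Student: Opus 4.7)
The plan is to construct by induction on $k < \omega$ a strictly increasing sequence of countable ordinals $\delta_0 < \delta_1 < \cdots$ with $\delta_0 > \beta$, together with pairwise disjoint tuples $\vec a_0^{(k)}, \ldots, \vec a_{k+1}^{(k)} \in U \cap T_{\delta_k}^m$, such that for each $i \le k+1$, $\vec a_i^{(k+1)}$ extends $\vec a_i^{(k)}$ in $U$, and the newly introduced tuple $\vec a_{k+2}^{(k+1)}$ also extends $\vec a_{k+1}^{(k)}$ (so that $\vec a_{k+1}^{(k+1)}$ and $\vec a_{k+2}^{(k+1)}$ are siblings above the common drop-down $\vec a_{k+1}^{(k)}$). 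Stage $0$ is handled directly by applying Proposition 4.3 to the root of $U$ at level $\beta$: this yields some $\delta_0 > \beta$ and $\omega$-many pairwise disjoint extensions in $U \cap T_{\delta_0}^m$, from which we pick $\vec a_0^{(0)}$ and $\vec a_1^{(0)}$. Setting $\delta = \sup_k \delta_k$, each $\vec b_n$ will arise as the limit at level $\delta$ of the chain $(\vec a_n^{(k)})_{k \ge \max(n-1, 0)}$.

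The main obstacle, I expect, is the inductive step, which reduces to what I call a \emph{coordinated extension} sub-lemma: given pairwise disjoint $\vec c_0, \ldots, \vec c_\ell \in U \cap T_\alpha^m$ and some $j \le \ell$, there exist a level $\gamma > \alpha$ and pairwise disjoint $\vec d_0, \ldots, \vec d_{\ell+1} \in U \cap T_\gamma^m$ with $\vec d_i$ above $\vec c_i$ for $i \le \ell$ and $\vec d_{\ell+1}$ also above $\vec c_j$. Proposition 4.3 only handles the single-starting-tuple case $\ell = 0$. My plan is to prove the general case by a finite induction on $\ell$, applying Proposition 4.3 inside each sub-promise $U_{\vec c_i} = \{\vec u \in U : \vec c_i \le_U \vec u\}$ (which is itself a promise at base $\alpha$) and iteratively pushing the working level up to a common $\gamma$. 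Aligning the levels across the different sub-promises is the technical heart; it uses that each sub-promise has pairwise disjoint extensions at cofinally many levels, so the working level can be bumped up finitely many times until all extensions land in a shared countable level.

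Assuming the inductive construction, the drop-down structure of the $\vec b_n$ is forced by unwinding the chains: for all $n \ge 0$ and $k \ge 1$ one verifies that
\[
\vec b_n \res \delta_{k-1} \ = \ \vec a_{\min(n, k)}^{(k-1)}.
\]
Since $\vec a_0^{(k-1)}, \ldots, \vec a_k^{(k-1)}$ are pairwise disjoint, condition (3) follows immediately. For condition (5), whenever $n > k$ we have $\min(n, k) = k$, hence $\vec b_n \res \delta_{k-1} = \vec a_k^{(k-1)} \in (X_0 \cup \cdots \cup X_k) \res \delta_{k-1}$. Applying the same formula at $\delta_{k-2}$ (replacing $k$ by $k-1$), the multi-set $\{\vec b_n \res \delta_{k-2} : n \le k\}$ contains $\vec a_{k-1}^{(k-2)}$ exactly twice (as the drop-downs of $\vec b_{k-1}$ and $\vec b_k$) and each $\vec a_i^{(k-2)}$ for $i < k-1$ exactly once, yielding the $(\le \! 2)$-to-one property of condition (4); the base case for $X_0 \cup X_1$ at $\beta$ is analogous, with both $\vec b_0$ and $\vec b_1$ dropping to the root of $U$.

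Finally, to ensure that each $\vec b_n$ genuinely lies in $U \cap T_\delta^m$ (rather than merely defining a branch through $U$), we may preliminarily thin $U$ using Lemma 1.1(a) and the promise structure so that every increasing chain in $U$ passing through cofinally many levels converges to an element of $U$ at the supremum; equivalently, we may choose each $\delta_k$ from a suitable thinned set of levels of $U$ where such convergence is guaranteed. Once this is in place, the constructed $\vec b_n$ all lie in $U \cap T_\delta^m$ and satisfy every condition of the lemma.
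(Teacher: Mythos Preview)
Your construction of the arrays $\vec a_i^{(k)}$ is sound, and the coordinated-extension sub-lemma is easier than you suggest: since $U$ is downwards closed and every element has uncountably many successors in $U$, every $\vec c \in U$ has an extension in $U$ at \emph{every} higher level, so after applying Proposition~4.3 once to $\vec c_j$ you can extend the remaining $\vec c_i$ to the resulting level for free. The real problem is your final step. Defining $\vec b_n$ as the limit of the chain $(\vec a_n^{(k)})_k$ requires this cofinal branch of $U \res \delta$ to have an upper bound in $U$ at level $\delta$, and there is no reason it should: $U$ has countable levels, so at any limit level only countably many of the (possibly continuum-many) branches below are realised. Your proposed thinning via Lemma~1.1(a) does not help, since that lemma only prunes dead ends and says nothing about closure under limits, and ``choosing the $\delta_k$ from a suitable set of levels'' cannot be arranged in advance of a construction that itself determines the $\delta_k$.

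The fix is simple and is essentially what the paper does: do not take limits. For each $n$ choose $\vec b_n$ to be \emph{any} element of $U \cap T_\delta^m$ above the single tuple $\vec a_n^{(n)}$; this sits at level $\delta_n < \delta$, so such a $\vec b_n$ exists by the promise property. Then $\vec b_n \res \delta_n = \vec a_n^{(n)}$ while $\vec b_m \res \delta_n = \vec a_{n+1}^{(n)}$ for all $m > n$, and this is already enough to run the disjointness and drop-down verifications you outlined (the values $\vec b_n \res \delta_k$ for $k > n$ are uncontrolled, but irrelevant). The paper streamlines this further by tracking only two tuples per stage rather than $k+2$: it repeatedly splits a single ``seed'' $\vec b_n^0$ into a new seed $\vec b_{n+1}^0$ and an anchor $\vec b_{n+1}^1$, and at the end picks each $\vec b_n$ above its anchor $\vec b_n^1$. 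This avoids your coordinated-extension step altogether.
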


\begin{proof}
	Let $\vec x$ be the root of $U$. 
	We define a sequence 
	$\langle \delta_n : n < \omega \rangle$ and tuples 
	$\vec b_n^0$ and $\vec b_n^1$ in $U \cap T_{\delta_n}^m$ for all $n < \omega$ 
	by induction. 
	We maintain inductively that for all $k < n < \omega$ and $j < 2$, 
	$\vec b_n^j \res \delta_k = \vec b_k^0$. 

	For the base case, apply Proposition 4.3 to find some countable ordinal $\delta_0 > \beta$ 
	such that there exist infinitely many pairwise disjoint members of 
	$U \cap T_{\delta_0}^m$ above $\vec x$. 
	Pick two among these disjoint tuples and call them $\vec b_0^0$ and $\vec b_0^1$. 	
	Now let $n < \omega$ and assume that 
	$\delta_n$, $\vec b_n^0$, and $\vec b_n^1$ are defined. 
	Apply Proposition 4.3 to find some countable ordinal $\delta_{n+1} > \delta_n$ such that 
	there exist infinitely many pairwise disjoint members of $U \cap T_{\delta_{n+1}}^m$ 
	above $\vec b_n^0$. 
	Take two such disjoint tuples and call them $\vec b_{n+1}^0$ and $\vec b_{n+1}^1$. 
	Note that the inductive hypothesis is maintained. 
	This completes the induction. 
	Let $\delta = \sup_n \delta_n$.
	
	For every $n < \omega$ pick some tuple $\vec b_n$ in 
	$U \cap T_{\delta}^m$ above $\vec b_n^1$. 
	Consider $k < n < \omega$. 
	Then $\vec b_k \res \delta_k = \vec b_k^1$, which is disjoint from $\vec b_k^0$. 
	On the other hand, by our inductive hypothesis 
	$$
	\vec b_n \res \delta_k = (\vec b_n \res \delta_n) \res \delta_k = 
	\vec b_n^1 \res \delta_k = \vec b_k^0.
	$$
	So $\vec b_k \res \delta_k$ and $\vec b_n \res \delta_k$ are disjoint. 
	It follows that $\vec b_k$ and $\vec b_n$ are disjoint as well. 

	Now fix $0 < k < \omega$. 
	Consider $n_0 < n_1 \le k$. 
	Then by the last paragraph, 
	$\vec b_{n_0} \res \delta_{n_0}$ and $\vec b_{n_1} \res \delta_{n_0}$ are disjoint. 
	Since $\delta_{k-1} \ge \delta_{n_0}$, 
	$\vec b_{n_0} \res \delta_{k-1}$ and 
	$\vec b_{n_1} \res \delta_{k-1}$ are also disjoint. 
	For any $n \le k$, $\vec b_n \res \beta = \vec x$, which is injective, 
	so $X_n$ has unique drop-downs to $\beta$ and hence also to $\delta_{k-1}$. 
	It easily follows from these observations that 
	$X_0 \cup \cdots \cup X_{k}$ has unique drop-downs to $\delta_{k-1}$.
	
	By the above, $X_0 \cup X_1$ has unique drop-drowns to $\delta_0$, 
	and since $(X_0 \cup X_1) \res \delta_0$ consists of the elements of 
	$\vec b_0^0$ and $\vec b_0^1$, which are disjoint and above $\vec x$, 
	$(X_0 \cup X_1) \res \delta_0$ has $(\le \! 2)$-to-one drop-downs to $\beta$. 
	By Lemma 4.5, $X_0 \cup X_1$ has $(\le \! 2)$-to-one drop-downs to $\beta$. 
	Now let $1 < k < \omega$ and we show that $X_0 \cup \cdots \cup X_k$ 
	has $(\le \! 2)$-to-one drop-downs to $\delta_{k-2}$. 
	As proved above, 
	$X_0 \cup \cdots \cup X_{k-1}$ has unique drop-downs to $\delta_{k-2}$. 
	Now $\vec b_k \res \delta_k = \vec b_k^1$ and 
	$\vec b_{k-1} \res \delta_{k-1} = \vec b_{k-1}^1$. 
	By our inductive hypothesis, it follows that 
	$\vec b_k \res \delta_{k-2} = (\vec b_k \res \delta_k) \res \delta_{k-2} = 
	\vec b_{k}^1 \res \delta_{k-2} = \vec b_{k-2}^0$ and 
	$\vec b_{k-1} \res \delta_{k-2} = (\vec b_{k-1} \res \delta_{k-1}) \res \delta_{k-2} = 
	\vec b_{k-1}^1 \res \delta_{k-2} = \vec b_{k-2}^0$. 
	It easily follows that 
	$X_{k-1} \cup X_k$ has $(\le \! 2)$-to-one drop-downs to $\delta_{k-2}$ and 
	so $X_0 \cup \cdots \cup X_{k}$ has $(\le \! 2)$-to-one drop-downs to $\delta_{k-2}$.
	
	Finally, consider $0 < k < \omega$.
	For any $k < n < \omega$, we have that 
	$\vec b_n \res \delta_{k-1} = \vec b_{k-1}^0 = \vec b_k \res \delta_{k-1}$. 
	It follows that 
	$(\bigcup_n X_n) \res \delta_{k-1} = (X_0 \cup \ldots \cup X_k) \res \delta_{k-1}$.	
\end{proof}

For many of the purposes of this article, it would suffice to define our forcing poset 
using promises. 
But for the preservation of Suslin trees, we need a more general notion 
(also see the remark after \cite[Definition 4.2]{AS2}).

\begin{definition}[Generalized Promises]
	Let $T$ be an $\omega_1$-tree. 
	Let $m < \omega$ be positive and $\gamma < \omega_1$. 
	A \emph{generalized promise on $T$ with dimension $m$ and base level $\gamma$} is a 
	function $\mathcal U$ with domain $[\gamma,\omega_1)$ satisfying that, for some 
	derived tree $T_{\vec x}$ of $T$, for all $\gamma \le \beta < \omega_1$:
	\begin{enumerate}
	\item $\mathcal U(\beta)$ is a non-empty countable collection of infinite
	subsets of $T_{\vec x} \cap T_\beta^m$;
	\item for every $\mathcal Y \in \mathcal U(\beta)$, 
	$\mathcal Y$ is 
	\emph{well-distributed} in the sense that 
	for every finite set $t \subseteq T_\beta$, there exists some element of $\mathcal Y$ 
	which is disjoint from $t$;
	\item for all $\gamma \le \alpha < \beta < \omega_1$ 
	and for any $\mathcal Y \in \mathcal U(\beta)$, 
	$\mathcal Y \res \alpha \in \mathcal U(\alpha)$.
	\end{enumerate}
\end{definition}

It is implicit in the above that $\vec x$ has size $m$ and its elements have 
height at most $\gamma$. 
Note that in (2), it suffices that this statement holds for 
$\mathcal U(\gamma)$. 
For assume this and 
suppose that $\gamma < \beta < \omega_1$ and $\mathcal Y \in \mathcal U(\beta)$. 
Then by (3), $\mathcal Y \res \gamma \in \mathcal U(\gamma)$. 
Let $t \subseteq T_\beta$ be finite. 
Then $t \res \gamma \subseteq T_\gamma$ is finite, so we can find some 
$\vec b_0 \in \mathcal Y \res \gamma$ which is disjoint from $t \res \gamma$. 
Now choose $\vec b \in \mathcal Y$ such that $\vec b \res \gamma = \vec b_0$. 
Then $\vec b$ is disjoint from $t$.

A generalized promise $\mathcal U$ 
is essentially an $\omega_1$-tree whose elements are sets $\mathcal Y$ 
such that for some $\gamma \le \beta < \omega_1$, $\mathcal Y \in [T_\beta^m]^\omega$ 
is well-distributed, and if $\mathcal Y$ and $\mathcal Z$ are members of the tree 
in $[T_\beta^m]^\omega$ and $[T_\delta^m]^\omega$ respectively, where $\beta < \delta$, 
then $\mathcal Y$ is below $\mathcal Z$ in the tree 
iff $\mathcal Y = \mathcal Z \res \beta$. 
For all $\alpha < \omega_1$, level $\alpha$ of this tree is equal to 
$\mathcal U(\gamma+\alpha)$. 
The union of any cofinal branch of this tree is the tail of some promise on $T$.

\begin{lemma}
	Let $T$ be an $\omega_1$-tree. 
	Let $m < \omega$ be positive and $\zeta < \omega_1$. 
	Suppose that $U$ is a promise with dimension $m$ and base level $\zeta$. 
	Then for some countable ordinal $\gamma < \omega_1$ greater than $\zeta$, 
	the function $\mathcal U$ with domain $[\gamma,\omega_1)$ 
	defined by $\mathcal U(\beta) = \{ U \cap T_\beta^m \}$ is a generalized 
	promise with dimension $m$ and base level $\gamma$.
\end{lemma}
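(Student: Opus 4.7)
The plan is to choose $\gamma > \zeta$ at a level where $U$ has already been ``fattened'' by Proposition 4.3, and then to verify the three clauses of Definition 4.7 directly. First I would apply Proposition 4.3 to the root $\vec x$ of $U$ and the base level $\zeta$ to obtain a countable ordinal $\beta_0 > \zeta$ together with a sequence $\langle \vec b_k : k < \omega \rangle$ of pairwise disjoint tuples in $U \cap T_{\beta_0}^m$, all above $\vec x$. Set $\gamma = \beta_0$.

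The one technical ingredient I need beyond this choice is the observation that for every $\vec a \in U$ and every $\beta$ with $\h_T(\vec a) < \beta < \omega_1$, there is some $\vec c \in U \cap T_\beta^m$ with $\vec a \le \vec c$ in $T_{\vec x}$. Since $U$ is a promise, $\vec a$ has uncountably many extensions in $U$, and since each level of $T_{\vec x}$ is countable, the set $L_{\vec a}$ of heights at which $\vec a$ has an extension in $U$ is cofinal in $\omega_1$. On the other hand, downwards closure of $U$ shows that $L_{\vec a}$ is downwards closed inside $(\h_T(\vec a), \omega_1)$: if $\vec c \in U$ extends $\vec a$ at some height $\beta_1 > \beta$, then $\vec c \res \beta \in U$ is still an extension of $\vec a$ at height $\beta$. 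Being cofinal and downwards closed in that interval, $L_{\vec a}$ equals the whole interval.

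With these in hand, verifying clauses (1)--(3) of Definition 4.7 for $\mathcal{U}(\beta) = \{U \cap T_\beta^m\}$ is straightforward. For (1), $U \cap T_\beta^m$ is countable since $T_\beta$ is, and it is infinite because each $\vec b_k$ extends to level $\beta$ by the observation above, and distinct $\vec b_k$ yield distinct extensions (they restrict at level $\gamma$ to distinct tuples). For (2), given a finite $t \subseteq T_\beta$, the projection $t \res \gamma$ is a finite subset of $T_\gamma$ and therefore meets only finitely many of the pairwise disjoint $\vec b_k$; any unmet $\vec b_k$ extends to $\vec c \in U \cap T_\beta^m$, and such $\vec c$ is disjoint from $t$. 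For (3), downwards closure gives $(U \cap T_\beta^m) \res \alpha \subseteq U \cap T_\alpha^m$ whenever $\gamma \le \alpha < \beta$, while the extension observation gives the reverse inclusion. There is no real obstacle in this lemma; it is essentially a translation between the two definitions of promise, with the only genuine content being the downwards-closure argument of the second paragraph.
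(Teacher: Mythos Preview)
Your proof is correct and follows essentially the same approach as the paper: apply Proposition 4.3 to obtain $\gamma$ with infinitely many pairwise disjoint tuples in $U \cap T_\gamma^m$, then verify the clauses of Definition 4.7 using that $U$ is downwards closed and every element has uncountably many extensions. The only cosmetic difference is that the paper invokes the remark following Definition 4.7 (that well-distributedness need only be checked at the base level) rather than verifying clause (2) at every $\beta$ directly as you do, but your direct argument is precisely the content of that remark.
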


\begin{proof}
	By Proposition 4.3, we can fix some $\gamma < \omega_1$ greater than $\zeta$ 
	for which there exists a pairwise 
	disjoint sequence $\langle \vec b_n : n < \omega \rangle$ of tuples in 
	$U \cap T_\gamma^m$ above the root of $U$. 
	Then for all finite $t \subseteq T_\gamma$, there is some $n$ such that 
	$\vec b_n$ is disjoint from $t$. 
	For all $\gamma \le \alpha < \beta < \omega_1$, 
	the equality $(U \cap T_\beta^m) \res \alpha = U \cap T_\alpha^m$ 
	follows from the fact that $U$ is downwards closed and every element of $U$ 
	has uncountably many elements above it.
\end{proof}

\section{Splitting Subtree Functions}

For Sections 5-8 and 10, we fix $T$ and $\mathcal S$ satisfying that $T$ is a normal 
Aronszajn tree, $\mathcal S \subseteq \omega \setminus 2$ is non-empty, 
and for any $x \in T$, 
$|\Imm_T(x)| \ge \sup(\mathcal S)$. 
Let $s = \min(\mathcal S)$. 
In the next section we introduce a forcing which adds 
an uncountable downwards closed normal subtree $U$ of $T$ such that for all $x \in U$, 
$| \Imm_U(x) | \in \mathcal S$. 
We are mainly interested in two cases: when $\mathcal S = \omega \setminus 2$, 
so $T$ is infinitely splitting and we add a finitely splitting subtree of $T$, 
and when $\mathcal S = \{ s \}$ and we add an $s$-splitting subtree of $T$.

As mentioned previously, conditions in the forcing poset we introduce in Section 6 
have two components, the first being a countable approximation of the generic subtree 
and the second being a countable collection of generalized promises. 
We now develop some basic information about the countable approximations 
and how they relate to generalized promises.

\begin{definition}[Subtree Functions]
	A \emph{subtree function with top level $\beta < \omega_1$} is a function 
	$F : T \res (\beta+1) \to 2$ satisfying:
	\begin{enumerate}
	\item the value of $F$ on the root of $T$ equals $1$;
	\item if $F(a) = 1$, then for all $\gamma < \h_T(a)$, 
	$F(a \res \gamma) = 1$;
	\item for all $\alpha < \beta$ and $x \in T_\alpha$, 
	if $F(x) = 1$ then there exists some $y \in T_\beta$ above $x$ 
	such that $F(y) = 1$.
	\end{enumerate}
\end{definition}

If $F$ is a subtree function with top level $\beta$ and $U$ is a promise with 
dimension $m$ and base 
level less than $\beta$, then we say that $F$ \emph{fulfills} $U$ if 
for every finite set $t \subseteq T_\beta$ there exists some 
$(x_0,\ldots,x_{m-1}) \in U \cap T_\beta^m$ disjoint from $t$ 
such that $F(x_i) = 1$ for all $i < m$. 
This definition of fulfillment has as a precursor Shelah's notion of fulfillment 
for his specializing function of \cite[Chapter V, Definition 6.4]{properimproper}, and 
is similar to that used by Lamei Ramandi and Moore 
in their subtree forcing of \cite[Remark 5.2]{mooreramandi}. 
The following definition broadens the notion of fulfillment in a natural way to 
generalized promises, and is similar to that of Abraham-Shelah \cite[Definition 4.1]{AS2}.

\begin{definition}[Fulfilling Generalized Promises]
	Let $F$ be a subtree function with top level $\beta < \omega_1$ 
	and let $\mathcal U$ be a generalized promise with dimension $m$ 
	and base level less than or equal to $\beta$. 
	We say that $F$ \emph{fulfills} $\mathcal U$ if for any finite set 
	$t \subseteq T_\beta$ and for any $\mathcal Y \in \mathcal U(\beta)$, 
	there exists some $(a_0,\ldots,a_{m-1}) \in \mathcal Y$ 
	whose elements are not in $t$ such that $F(a_i) = 1$ for all $i < m$.
\end{definition}

\begin{definition}[$\mathcal S$-Splitting Subtree Functions]
	An \emph{$\mathcal S$-splitting subtree function with top level $\beta < \omega_1$} 
	is any subtree function $F$ with top level $\beta$ satisfying that for all 
	$x \in T \res \beta$, 
	if $F(x) = 1$, then  
	$$
	| \{ y \in \Imm_T(x) : F(y) = 1 \} | \in \mathcal S.
	$$
\end{definition}

Two basic results we need concerning the interaction of 
subtree functions and generalized promises 
are \emph{extension} and \emph{adding generalized promises}, which we prove next. 
The first result say that if an $\mathcal S$-splitting subtree function fulfills countably 
many generalized promises, then we can extend the subtree function 
arbitrarily high while still fulfilling the same generalized promises. 
The second result says that we can extend an $\mathcal S$-splitting subtree function 
to satisfy an additional generalized promise which is suitable for it in some sense.

\begin{proposition}[Extension]
	Suppose that $F$ is an $\mathcal S$-splitting subtree function with top level $\beta$ 
	and $\Gamma$ is a countable collection of generalized promises 
	whose base levels are less than or equal to $\beta$. 
	Assume that $F$ fulfills every generalized promise in $\Gamma$. 
	Let $\beta \le \gamma < \omega_1$, let $X \subseteq T_\gamma$ be finite with 
	$\les$-to-one drop-downs to $\beta$, and assume that for all $x \in X$, 
	$F(x \res \beta) = 1$. 
	Then there exists an $\mathcal S$-splitting subtree function $F^+$ extending $F$ 
	with top level $\gamma$ such that $F^+(x) = 1$ for all $x \in X$ 
	and $F^+$ fulfills every generalized promise in $\Gamma$.
\end{proposition}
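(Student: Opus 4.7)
The plan is to prove the proposition by induction on $\gamma \ge \beta$, allowing the data $F$, $\Gamma$, and $X$ to vary freely in the inductive hypothesis. The base case $\gamma = \beta$ is immediate: take $F^+ = F$; each $x \in X \subseteq T_\beta$ already satisfies $F(x) = 1$ by hypothesis, and $F$ already fulfills $\Gamma$.

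For the successor case $\gamma = \gamma_0 + 1$, first apply the inductive hypothesis with $X \res \gamma_0$ in place of $X$. This set has $(\le s)$-to-one drop-downs to $\beta$ by Lemma 4.5, and its elements have $F(\cdot \res \beta) = 1$, so the IH yields $F_0$ with top level $\gamma_0$ that marks $X \res \gamma_0$ and fulfills $\Gamma$. Then extend $F_0$ one further level by choosing, for each $y \in T_{\gamma_0}$ with $F_0(y) = 1$, a set $A_y \subseteq \Imm_T(y)$ with $|A_y| \in \mathcal S$ containing $\{x \in X : x \res \gamma_0 = y\}$; the latter has size $\le s$ since all such $x$'s share their $\beta$-drop-down and $X$ has $(\le s)$-to-one drop-downs to $\beta$. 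For fulfillment of each $\mathcal U \in \Gamma$ at level $\gamma_0+1$: each $\mathcal Z \in \mathcal U(\gamma_0+1)$ satisfies $\mathcal Z \res \gamma_0 \in \mathcal U(\gamma_0)$, whose $F_0$-marked subset is well-distributed and hence contains an infinite pairwise-disjoint family; each member lifts to a tuple in $\mathcal Z$, and I would add the lifted coordinates to the appropriate $A_y$'s using a diagonal argument over the countably many $(\mathcal U, \mathcal Z)$ pairs.

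For the limit case, fix $\beta = \eta_0 < \eta_1 < \cdots$ cofinal in $\gamma$, and enumerate all pairs $(\mathcal U, \mathcal Y)$ with $\mathcal U \in \Gamma$ and $\mathcal Y \in \mathcal U(\gamma)$ as $\{(\mathcal U_k, \mathcal Y_k) : k < \omega\}$ with each pair repeated infinitely often. At stage $k$, use well-distribution of $\mathcal Y_k \res \beta \in \mathcal U_k(\beta)$ to pick $\vec c_k \in \mathcal Y_k$ so that $\vec c_k \res \beta$ is disjoint from $X \res \beta$ and from $\vec c_j \res \beta$ for $j < k$. Since disjointness at level $\beta$ propagates upward (two distinct $\beta$-ancestors have distinct descendants at any higher level) and the coordinates within a single $\vec c_k$ are themselves distinct at level $\beta$ (using that $\beta$ lies at or above the base level of $\mathcal U_k$), the set $X_{k+1} := (X \res \eta_{k+1}) \cup \bigcup_{j \le k} (\vec c_j \res \eta_{k+1})$ has $(\le s)$-to-one drop-downs to $\eta_k$: each $y \in T_{\eta_k}$ receives at most $s$ contributions from $X$ if $y \in X \res \eta_k$ and none from the $\vec c_j$'s, or at most one contribution from the $\vec c_j$'s if $y \notin X \res \eta_k$ and none from $X$. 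Apply the IH to extend from $\eta_k$ to $\eta_{k+1}$ marking $X_{k+1}$. Finally, declare $y \in T_\gamma$ marked iff $y \res \eta_k$ is marked for all $k$; since each pair $(\mathcal U, \mathcal Y)$ is enumerated infinitely often, infinitely many pairwise-disjoint $\vec c_k$'s in $\mathcal Y$ have all coordinates marked at $\gamma$, giving fulfillment.

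The main obstacle is verifying subtree function axiom (3) for $F^+$ at the new top level $\gamma$ in the limit case: every marked element at a level below $\gamma$ must admit a marked extension in $T_\gamma$. While normality of $T$ guarantees that some element of $T_\gamma$ lies above any element at a lower level, the marked extensions at the intermediate $\eta_k$'s produced by successive applications of the IH need not lie along a chain converging to a point of $T_\gamma$. The fix I would employ is to interleave the construction above with a second bookkeeping via a double enumeration of the marked elements that emerge at each $\eta_k$, committing at each stage to a witness $z \in T_\gamma$ for one such element and adding its drop-downs to the subsequent $X_n$'s; the sensitive point is choosing these witnesses so as to preserve the $(\le s)$-to-one drop-down condition that each invocation of the IH requires.
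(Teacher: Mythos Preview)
Your overall architecture (induction on $\gamma$, with successor and limit cases handled by bookkeeping) matches the paper's, and you correctly identify the axiom-(3) obstacle in the limit case and the need to interleave witnesses for it. However, there is a genuine gap in your limit case that would prevent the inductive step from going through.

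At stage $k$ you select $\vec c_k \in \mathcal Y_k$ using only \emph{well-distribution} of $\mathcal Y_k \res \beta$, arranging disjointness at level $\beta$ from $X$ and from earlier $\vec c_j$'s. You then want to apply the inductive hypothesis to extend $F_k$ (top level $\eta_k$) to $F_{k+1}$ marking $X_{k+1}$, which now includes the coordinates of $\vec c_k \res \eta_{k+1}$. But the inductive hypothesis requires $F_k(x \res \eta_k) = 1$ for every $x \in X_{k+1}$, and in particular $F_k\big((\vec c_k)_i \res \eta_k\big) = 1$. Nothing in your selection of $\vec c_k$ guarantees this: well-distribution says nothing about $F$, and even if you had invoked fulfillment of $F$ at level $\beta$ to get $F\big((\vec c_k)_i \res \beta\big) = 1$, that still would not force $F_k$ to mark $(\vec c_k)_i \res \eta_k$, since $(\vec c_k)_i \res \eta_k$ was not included in any earlier $X_j$ and the intermediate extensions $F_1,\ldots,F_k$ were built without reference to $\vec c_k$.

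The repair, which is what the paper does, is to choose the promise-witness at stage $k$ using \emph{fulfillment of $F_k$ at its own top level $\eta_k$}: since $F_k$ fulfills $\mathcal U_k$ and $\mathcal Y_k \res \eta_k \in \mathcal U_k(\eta_k)$, you can pick $\vec a \in \mathcal Y_k \res \eta_k$ disjoint from $(Z_k \cup t) \res \eta_k$ with $F_k(a_i) = 1$ for all $i$, and then lift $\vec a$ to some $\vec c_k \in \mathcal Y_k$. Now $F_k((\vec c_k)_i \res \eta_k) = F_k(a_i) = 1$ by construction, and the inductive hypothesis applies. A minor related point: in your successor case you first fix $A_y$ with $|A_y| \in \mathcal S$ and \emph{then} propose to add fulfillment witnesses to the $A_y$'s; these steps need to be interleaved (as your parenthetical ``diagonal argument'' suggests), since adding witnesses after fixing $|A_y|$ could push the count out of $\mathcal S$.
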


\begin{proof}
	The proof is by induction on $\gamma$, where the base case $\gamma = \beta$ is immediate.

	\emph{Successor case:} Assume that the statement is true for some countable 
	ordinal $\gamma \ge \beta$, and we prove that it is true for $\gamma+1$. 
	So let $X \subseteq T_{\gamma+1}$ be finite with $\les$-to-one 
	drop-downs to $\beta$ such that for all $x \in X$, $F(x \res \beta) = 1$. 
	By Lemma 4.5, $X$ has $\les$-to-one drop-downs to $\gamma$ and 
	$X \res \gamma$ has $\les$-to-one drop-downs to $\beta$. 
	Obviously, for all $y \in X \res \gamma$, $F(y \res \beta) = 1$. 
	By the inductive hypothesis, we can fix an $\mathcal S$-splitting subtree function $G$ 
	with top level $\gamma$ such that $G(x \res \gamma) = 1$ for all $x \in X$ 
	and $G$ fulfills every generalized promise in $\Gamma$.

	Next we extend $G$ to $G^+$ with top level $\gamma+1$. 
	To help with the construction of $G^+$, fix the following objects:
	\begin{itemize}
		\item an enumeration $\langle y_n : n < \omega \rangle$ of $T_\gamma$;
		\item an enumeration $\langle \mathcal U_n : n < \omega \rangle$ of $\Gamma$;
		\item for each $n < \omega$, an enumeration 
		$\langle \mathcal Y^n_k : k < \omega \rangle$ of $\mathcal U_n(\gamma+1)$ 
		(possibly with repetitions);
		\item an enumeration $\langle t_n : n < \omega \rangle$ of all finite 
		subsets of $T_{\gamma+1}$;
		\item a bijection $h : \omega \to 2 \times \omega^3$.
	\end{itemize}
	We define by induction an increasing sequence $\langle Z_n : n < \omega \rangle$ 
	of finite subsets of $T_{\gamma+1}$. 
	Our inductive hypothesis is that for all $n < \omega$ and $y \in T_{\gamma}$, 
	$y$ has at most $s$-many immediate successors in $Z_n$.

	To begin, let $Z_0 = X$. 
	Since $X$ has $\les$-to-one drop-downs to $\gamma$, 
	the inductive hypothesis holds. 	
	Now let $n < \omega$ and assume that $Z_n$ is defined and satisfies the 
	inductive hypothesis. 
	Let $h(n) = (i,k_0,k_1,k_2)$.

	\emph{Case 1:} $i = 0$. 
	Consider $y_{k_0}$. 
	If $G(y_{k_0}) = 0$, then let $Z_{n+1} = Z_n$. 
	Assume that $G(y_{k_0}) = 1$. 
	Since $y_{k_0}$ has at least $s$-many immediate successors in $T$ and 
	$y_{k_0}$ has at most $s$-many immediate 
	successors in $Z_n$, we can find a finite set 
	$Z_{n+1}$ containing $Z_n$ such that $y_{k_0}$ has exactly $s$-many immediate 
	successors in $Z_{n+1}$ and every member of $Z_{n+1} \setminus Z_n$ is an 
	immediate successor of $y_{k_0}$. 
	Clearly, the inductive hypothesis holds.

	\emph{Case 2:} $i = 1$. 
	Consider $\mathcal U_{k_0}$, 
	$\mathcal Y^{k_0}_{k_1}$, and $t_{k_2}$. 
	Let $l$ be the dimension of $\mathcal U_{k_0}$. 
	Applying the fact that $G$ fulfills $\mathcal U_{k_0}$, we can find 
	some $\vec a = (a_0,\ldots,a_{l-1}) \in \mathcal Y^{k_0}_{k_1} \res \gamma$ 
	which is disjoint from $(Z_n \cup t_{k_2}) \res \gamma$ and satisfies that 
	for all $i < l$, $G(a_i) = 1$. 
	Fix $\vec a^+ = (a_0^+,\ldots,a_{l-1}^+) \in \mathcal Y^{k_0}_{k_1}$ above $\vec a$. 
	Since $\vec a$ is injective, $\vec a^+$ has unique drop-downs to $\gamma$. 
	Note that $\vec a^+$ is disjoint from $Z_n \cup t_{k_2}$. 
	Define $Z_{n+1} = Z_n \cup \{ a_0^+,\ldots,a_{l-1}^+ \}$. 
	By the inductive hypothesis for $n$ and the fact that 
	$\{ a_0,\ldots,a_{l-1} \}$ is disjoint from $Z_n \res \gamma$, 
	the inductive hypothesis is maintained.

	This completes the construction. 
	Now define $G^+$ extending $G$ so that for all $x \in T_{\gamma+1}$, 
	$G^+(x) = 1$ if $x \in \bigcup_n Z_n$, and $G^+(x) = 0$ otherwise. 
	It is routine to check that $G^+$ is as required.

	\emph{Limit case:} Assume that $\delta > \beta$ is a countable limit ordinal and for 
	all $\gamma$ with $\beta \le \gamma < \delta$, the result holds for $\gamma$. 
	Let $X \subseteq T_\delta$ be a finite set which has $\les$-to-one 
	drop-downs to $\beta$ such that for all $x \in X$, $F(x \res \beta) = 1$. 
	To help with the construction, fix the following objects:
	\begin{itemize}
		\item an increasing sequence of ordinals $\langle \delta_n : n < \omega \rangle$ 
		cofinal in $\delta$ with $\delta_0 = \beta$;
		\item an enumeration $\langle y_n : n < \omega \rangle$ of $T \res \delta$;
		\item an enumeration $\langle \mathcal U_n : n < \omega \rangle$ of $\Gamma$;
		\item for each $n < \omega$, an enumeration 
		$\langle \mathcal Y^n_k : k < \omega \rangle$ of $\mathcal U_n(\delta)$ 
		(possibly with repetitions);
		\item an enumeration $\langle t_n : n < \omega \rangle$ of all finite 
		subsets of $T_{\delta}$;
		\item a surjection $h : \omega \to 2 \times \omega^3$ such that every 
		element of the codomain has an infinite preimage.
	\end{itemize}

	We define by induction 
	an increasing sequence $\langle Z_n : n < \omega \rangle$ 
	of finite subsets of $T_{\delta}$ together with a sequence 
	$\langle F_n : n < \omega \rangle$ satisfying that for all $n < \omega$:
	\begin{enumerate}
		\item $Z_n$ has $\les$-to-one drop-downs to $\delta_n$;
		\item $F_n$ is an $\mathcal S$-splitting subtree function 
		with top level $\delta_n$, $F_n(z \res \delta_n) = 1$ 
		for all $z \in Z_n$, and $F_n \subseteq F_{n+1}$.
	\end{enumerate}

	To begin, let $F_0 = F$ and $Z_0 = X$. 
	Now let $n < \omega$ and assume that $Z_n$ and $F_n$ are defined 
	and satisfy (1) and (2). 
	By Lemma 4.5, $Z_n \res \delta_{m+1}$ has $\les$-to-one drop-downs to $\delta_m$. 
	Let $h(n) = (i,k_0,k_1,k_2)$. 
	As a first step, 
	we define $Z_{n+1}$ which has $\les$-to-one drop-downs to $\delta_{n}$ 
	such that for all $x \in Z_{n+1}$, $F_n(x \res \delta_n) = 1$.

	\emph{Case 1:} $i = 0$. 
	Consider $y_{k_0}$. 
	If either (a) $\h_T(y_{k_0}) > \delta_n$, (b) $\h_T(y_{k_0}) \le \delta_n$ 
	and $F_n(y_{k_0}) = 0$, or (c) $\h_T(y_{k_0}) \le \delta_n$ and 
	$y_{k_0} \in (Z_n \res \h_T(y_{k_0}))$, then let $Z_{n+1} = Z_n$ 
	(roughly speaking, (a) says $y_{k_0}$ is not yet ready to be handled, (b) says 
	$y_{k_0}$ is not relevant, and (c) says $y_{k_0}$ has already been handled). 
	Now suppose that $\h_T(y_{k_0}) \le \delta_n$, $F_n(y_{k_0}) = 1$, 
	and $y_{k_0} \notin (Z_n \res \h_T(y_{k_0}))$. 
	Since $F_n$ is a subtree function, 
	we can fix $y' \ge_T y_{k_0}$ with height $\delta_n$ such that $F_n(y') = 1$. 
	Note that $y' \notin Z_n \res \delta_n$. 
	As $T$ is normal, fix some $z \in T_\delta$ above $y'$. 
	Let $Z_{n+1} = Z_n \cup \{ z \}$. 
	By the inductive hypothesis and the fact that 
	$z \res \delta_n = y'$ is not in $Z_n \res \delta_n$, it easily follows that 
	$Z_{n+1}$ has $\les$-to-one drop-downs to $\delta_n$. 

	\emph{Case 2:} $i = 1$. 
	Consider $\mathcal U_{k_0}$, $\mathcal Y^{k_0}_{k_1}$, and $t_{k_2}$. 
	Let $l$ be the dimension of $\mathcal U_{k_0}$. 
	Applying the fact that $F_n$ fulfills $\mathcal U_{k_0}$, we can find 
	$\vec a = (a_0,\ldots,a_{l-1}) \in \mathcal Y^{k_0}_{k_1} \res \delta_n$ 
	which is disjoint from $(Z_n \cup t_{k_2}) \res \delta_n$. 
	Fix $\vec a^+ = (a_0^+,\ldots,a_{l-1}^+) \in \mathcal Y^{k_0}_{k_1}$ above $\vec a$. 
	Since $\vec a$ is injective, $\vec a^+$ has unique drop-downs to $\delta_n$. 
	Note that $\vec a^+$ is disjoint from $t_{k_2}$. 
	Define $Z_{n+1} = Z_n \cup \{ a_0^+,\ldots,a_{l-1}^+ \}$. 
	By the inductive hypothesis and the fact that 
	$\{ a_0^+,\ldots,a_{l-1}^+ \} \res \delta_n$ is disjoint from $Z_n \res \delta_n$, 
	it easily follows that 
	$Z_{n+1}$ has $\les$-to-one drop-downs to $\delta_{n+1}$.

	As a second step, we define $F_{n+1}$. 
	Note that by Lemma 4.5, $Z_{n+1} \res \delta_{n+1}$ has $\les$-to-one drop-downs 
	to $\delta_n$, and for all $y \in Z_{n+1} \res \delta_{n+1}$, 
	$F_n(y \res \delta_n) = 1$. 
	By the inductive hypothesis for $\delta_{n+1}$, we can find 
	an $\mathcal S$-splitting subtree function $F_{n+1}$ extending $F_n$ 
	with top level $\delta_{n+1}$ 
	such that for all $y \in Z_{n+1} \res \delta_{n+1}$, $F_{n+1}(y) = 1$.

	This completes the construction. 
	Define $F^+$ extending $\bigcup_n F_n$ so that for all $x \in T_{\delta}$, 
	$F^+(x) = 1$ if $x \in \bigcup_n Z_n$, and $F^+(x) = 0$ otherwise. 
	It is routine to check that $F^+$ is as required.
\end{proof}

We now move on to the second main result of the section, which is that 
we can extend an $\mathcal S$-splitting subtree function to fulfill a generalized promise 
assuming that the generalized promise is suitable for it in some way which we describe next. 

\begin{definition}[Suitable Generalized Promises]
	Suppose that $F$ is an $\mathcal S$-splitting subtree function with top level $\beta$ 
	and $\mathcal U$ is a generalized promise with dimension $l$ and 
	base level $\delta$, where $\delta > \beta$ is a limit ordinal. 
	We say that $\mathcal U$ is \emph{suitable for $F$} if for some 
	sequences $\langle \delta_n : n < \omega \rangle$ and 
	$\langle \vec b_n : n < \omega \rangle$:
	\begin{itemize}
		\item $\mathcal U(\delta)$ is a singleton $\{ \mathcal Y_0 \}$;
		\item $\langle \delta_n : n < \omega \rangle$ is 
		an increasing sequence 
		of countable ordinals with supremum $\delta$, where $\delta_0 > \beta$;
		\item $\langle \vec b_n : n < \omega \rangle$ is a sequence 
		of disjoint 
		tuples in $\mathcal Y_0$, where we 
		let $X_n$ be the elements in $\vec b_n$ for all $n < \omega$;
		\item for all $0 < k < \omega$, 
		$X_0 \cup \cdots \cup X_{k}$ has unique drop-downs to $\delta_{k-1}$;
		\item $X_0 \cup X_1$ has $(\le \! 2)$-to-one drop-downs to $\beta$ and 
		for all $1 < k < \omega$, $X_0 \cup \cdots \cup X_k$ has 
		$(\le \! 2)$-to-one drop-downs to $\delta_{k-2}$;
		\item for all $k > 0$, $(\bigcup_n X_n) \res \delta_{k-1} = 
		(X_0 \cup \cdots \cup X_k) \res \delta_{k-1}$;
		\item for all $x \in \bigcup_n X_n$, $F(x \res \beta) = 1$.
	\end{itemize}
\end{definition}

\begin{lemma}
	Suppose that $F$ is an $\mathcal S$-splitting subtree function with top level $\beta$, 
	$U$ is a promise on $T$ with base level greater than or equal to $\beta$ and root 
	$(x_0,\ldots,x_{m-1})$, and for all $i < m$, $F(x_i \res \beta) = 1$. 
	Then for some countable limit ordinal $\delta$ greater than the base level of $U$, 
	the function $\mathcal U$ with domain $[\delta,\omega_1)$ 
	defined by $\mathcal U(\beta) = \{ U \cap T_\beta^m \}$ is a generalized 
	promise which is suitable for $F$.
\end{lemma}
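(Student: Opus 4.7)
The plan is to feed the promise $U$ into Lemma 4.6 to obtain the combinatorial data witnessing suitability, and then package $U$ as a generalized promise in exactly the manner of Lemma 4.8.

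First, I apply Lemma 4.6 to $U$ to obtain a strictly increasing sequence $\langle \delta_n : n < \omega \rangle$ of countable ordinals with supremum $\delta$, where $\delta_0$ exceeds the base level of $U$ (and hence exceeds $\beta$), together with a sequence $\langle \vec b_n : n < \omega \rangle$ of pairwise disjoint tuples in $U \cap T_\delta^m$ and associated sets $X_n$ of components satisfying the unique drop-down, $(\le \! 2)$-to-one drop-down, and stabilization properties. Note that $\delta$ is automatically a countable limit ordinal exceeding the base level of $U$.

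Next, I define $\mathcal U$ with domain $[\delta,\omega_1)$ by $\mathcal U(\alpha) = \{ U \cap T_\alpha^m \}$, so that $\mathcal Y_0 := U \cap T_\delta^m$ is the unique element of $\mathcal U(\delta)$. To verify that $\mathcal U$ is a generalized promise in the sense of Definition 4.7: clause (1) holds since each $\mathcal U(\alpha)$ is a singleton whose member sits inside $T_{\vec x} \cap T_\alpha^m$, which is countable because $T$ has countable levels, and is infinite because it contains the distinct tuples $\vec b_n \res \alpha$. For clause (2), by the remark following Definition 4.7 it is enough to check well-distributedness at $\delta$: any finite $t \subseteq T_\delta$ can meet only finitely many of the pairwise disjoint $\vec b_n \in \mathcal Y_0$, so some $\vec b_n$ is disjoint from $t$. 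For clause (3), the identity $(U \cap T_{\alpha'}^m) \res \alpha = U \cap T_\alpha^m$ for $\delta \le \alpha < \alpha' < \omega_1$ follows from the downward closure of $U$ (for the $\subseteq$ inclusion) and from the fact that every element of $U$ has uncountably many $U$-elements above it (for the $\supseteq$ inclusion, lifting an $m$-tuple in $U \cap T_\alpha^m$ coordinatewise to level $\alpha'$).

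Finally, I verify suitability in the sense of Definition 4.11. The singleton $\mathcal U(\delta) = \{\mathcal Y_0\}$, the sequences $\langle \delta_n \rangle$ and $\langle \vec b_n \rangle$, and the bulleted drop-down and stabilization conditions are precisely the output of Lemma 4.6. The only bullet that requires a separate argument is that $F(x \res \beta) = 1$ for every $x \in \bigcup_n X_n$: such an $x$ is the $i$-th coordinate of $\vec b_n$ for some $n$ and some $i < m$, hence $x \ge_T x_i$, and because the base level of $U$ is at least $\beta$ we have $\h_T(x_i) \ge \beta$, so $x \res \beta = x_i \res \beta$, whose $F$-value equals $1$ by hypothesis. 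There is no real obstacle; all of the combinatorial work has been done by Lemma 4.6, and the only mild point is the propagation of the value $1$ from the root of $U$ to the entire union $\bigcup_n X_n$, which is immediate from the height assumption on the base level of $U$.
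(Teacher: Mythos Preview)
Your proof is correct and takes essentially the same approach as the paper, whose proof reads in its entirety ``Apply Lemma 4.6 to $U$ and then argue as in Lemma 4.8.'' You have unpacked exactly this: Lemma 4.6 supplies $\delta$, the sequences $\langle\delta_n\rangle$ and $\langle\vec b_n\rangle$, and all the drop-down and stabilization clauses of suitability, while the Lemma 4.8 argument verifies that $\mathcal U$ is a generalized promise; the only additional point, that $F(x\res\beta)=1$ for each $x\in\bigcup_n X_n$, you handle correctly via the root hypothesis. (One minor correction: the definition of suitability is Definition 5.5, not 4.11.)
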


\begin{proof}
	Apply Lemma 4.6 to $U$ and then argue as in Lemma 4.8.
\end{proof}

\begin{proposition}[Adding Generalized Promises]
	Suppose that $F$ is an $\mathcal S$-splitting subtree function with top level $\beta$, 
	$\Gamma$ is a countable collection of generalized promises whose base levels are less than 
	or equal to $\beta$, and $F$ fulfills every member of $\Gamma$. 
	Assume that $\mathcal U$ is a generalized promise with base level $\delta > \beta$ 
	which is suitable for $F$. 
	Then there exists an $\mathcal S$-splitting subtree function $F^+$ 
	extending $F$ with top level $\delta$ 
	such that $F^+$ fulfills every member of 
	$\Gamma \cup \{ \mathcal U \}$.
\end{proposition}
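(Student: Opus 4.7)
The plan is to adapt the limit-case construction in the proof of Proposition 5.4 with only small modifications. The key observation is that $\mathcal U(\delta) = \{\mathcal Y_0\}$ is a singleton and the tuples $\langle \vec b_n : n < \omega \rangle$ given by suitability are pairwise disjoint members of $\mathcal Y_0$, so to fulfill $\mathcal U$ it suffices to arrange $F^+(a) = 1$ for every $a \in \bigcup_n X_n$, where $X_n$ is the coordinate set of $\vec b_n$: for any finite $t \subseteq T_\delta$, cofinitely many $\vec b_n$ are then disjoint from $t$ and witness fulfillment.

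Fix the suitability data together with enumerations of $T \res \delta$, of $\Gamma$, of each $\mathcal U_n(\delta)$ for $\mathcal U_n \in \Gamma$, and of the finite subsets of $T_\delta$, along with a surjection $h : \omega \to 2 \times \omega^3$ with infinite preimages, just as in the limit case of Proposition 5.4. Inductively build $\mathcal S$-splitting subtree functions $F_n$ with top level $\delta_n$ and finite $Z_n \subseteq T_\delta$ under the strengthened inductive hypotheses that $F_n$ extends $F$ and fulfills every member of $\Gamma$, that $Z_n \supseteq X_0 \cup \cdots \cup X_{n+1}$, that $F_n(y) = 1$ for all $y \in Z_n \res \delta_n$, and that $Z_n$ has $\les$-to-one drop-downs to $\delta_n$. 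Run Cases 1 and 2 of the limit-case construction of 5.4 verbatim, and additionally at the end of stage $n$ enlarge $Z_{n+1}$ to include $X_{n+2}$. The extension step of 5.4 then produces $F_{n+1}$ with top level $\delta_{n+1}$ having $Z_{n+1} \res \delta_{n+1}$ in its $1$-set.

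Finally, set $F^+ \res (T \res \delta) = \bigcup_n F_n$ and declare $F^+(x) = 1$ at level $\delta$ iff $x \in \bigcup_n Z_n$; observe that $\bigcup_n X_n \subseteq \bigcup_n Z_n$. The main technical obstacle is verifying that absorbing $X_{n+2}$ into $Z_{n+1}$ preserves the $\les$-to-one drop-down bound to $\delta_n$. The key ingredients are: the suitability equation $(\bigcup_m X_m) \res \delta_n = (X_0 \cup \cdots \cup X_{n+1}) \res \delta_n$, which shows that $X_{n+2}$'s drop-downs at $\delta_n$ land in positions already contained in $(X_0 \cup \cdots \cup X_{n+1}) \res \delta_n \subseteq Z_n \res \delta_n$; the $(\le 2)$-to-one suitability clause on $X_0 \cup \cdots \cup X_{n+2}$'s drop-downs to $\delta_n$, which together with $s \ge 2$ bounds the total $X$-contribution at each such position by $s$; and the fact that Cases 1 and 2 of 5.4 always steer new additions to drop-downs at $\delta_n$ avoiding $Z_n \res \delta_n$, which in particular avoids the $X$-positions, so no ``extra'' drop-down of $Z_n$ piles up on top of the $X$-contribution when $X_{n+2}$ arrives. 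Granted this bookkeeping, the remaining checks are routine: the subtree condition at level $\delta$ comes from Case 1 (every $1$-node below $\delta$ has a $1$-extension in $\bigcup_n Z_n$), $\Gamma$-fulfillment from Case 2, and $\mathcal U$-fulfillment from the inclusion $\bigcup_n X_n \subseteq \{x \in T_\delta : F^+(x) = 1\}$ together with the pairwise disjointness of the $\vec b_n$.
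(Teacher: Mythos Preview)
Your approach is correct and follows essentially the same strategy as the paper: build the extension level by level along the $\delta_n$'s, handle the subtree-function condition and $\Gamma$-fulfillment via the Case~1/Case~2 bookkeeping of the limit case of Proposition~5.4, and ensure all of $\bigcup_n X_n$ ends up in the $1$-set so that the disjoint tuples $\vec b_n$ witness fulfillment of $\mathcal U$.

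The one organizational difference is worth noting. The paper keeps the $X$-stuff \emph{separate} from $Z_n$, maintaining the explicit invariants that $Z_n$ has \emph{unique} drop-downs to $\delta_n$ and that $Z_n \res \delta_n$ is disjoint from $(\bigcup_m X_m)\res\delta_n$; it then feeds $Z_{n+1} \cup X_0 \cup \cdots \cup X_{n+2}$ into the Extension proposition, where the $(\le 2)$-to-one bound is immediate from disjointness plus the suitability clause. Your version absorbs $X_0\cup\cdots\cup X_{n+1}$ into $Z_n$ and tracks only $\les$-to-one drop-downs, which forces you to rely on the claim that ``no extra drop-down of $Z_n$ piles up on top of the $X$-contribution.'' That claim is correct, but it is really an inductive invariant in its own right---namely, that above every point of $(\bigcup_m X_m)\res\delta_n$ the only members of $Z_n$ are those coming from $X_0\cup\cdots\cup X_{n+1}$---and your stated hypotheses (just $\les$-to-one drop-downs) do not by themselves imply it; you need it to propagate from stage to stage. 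It does propagate, precisely because Cases~1 and~2 avoid $Z_n\res\delta_n$, but you should state it explicitly as part of the induction rather than leave it buried in the final paragraph. The paper's disjointness formulation sidesteps this by making the separation structural, which is why its counting step is a one-liner.
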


\begin{proof}
	Let $\mathcal U(\delta) = \{ \mathcal Y_0 \}$. 
	Fix sequences $\langle \delta_n : n < \omega \rangle$, 
	$\langle \vec b_n : n < \omega \rangle$, and $\langle X_n : n < \omega \rangle$ 
	as in Definition 5.5.
	To help with the construction of $F^+$, we fix the following objects:
	\begin{itemize}
		\item an enumeration $\langle y_n : n < \omega \rangle$ of $T \res \delta$;
		\item an enumeration $\langle \mathcal U_n : n < \omega \rangle$ of $\Gamma$;
		\item for each $n < \omega$, an enumeration 
		$\langle \mathcal Y^n_k : k < \omega \rangle$ of $\mathcal U_n(\delta)$ 
		(possibly with repetitions);
		\item an enumeration $\langle t_n : n < \omega \rangle$ of all finite 
		subsets of $T_{\delta}$;
		\item a surjection $h : \omega \to 2 \times \omega^3$ such that every 
		element of the codomain has an infinite preimage.
	\end{itemize}

	We define by induction 
	an increasing sequence $\langle Z_n : n < \omega \rangle$ 
	of finite subsets of $T_{\delta}$ together with a sequence 
	$\langle F_n : n < \omega \rangle$ satisfying that for all $n < \omega$:
	\begin{enumerate}
		\item $Z_n$ has unique drop-downs to $\delta_n$;
		\item $Z_n \res \delta_n$ and $(\bigcup_m X_m) \res \delta_n$ are disjoint;
		\item $F_n$ is an $\mathcal S$-splitting subtree function extending $F$ 
		with top level $\delta_n$ such that $F_n(z \res \delta_n) = 1$ 
		for all $z \in Z_n \cup \bigcup_m X_m$;
		\item $F_n \subseteq F_{n+1}$.
	\end{enumerate}

	To begin, let $Z_0 = \emptyset$. 
	Since $X_0 \cup X_1$ has $(\le \! 2)$-to-one drop-downs to $\beta$, by Lemma 4.5 
	also $(X_0 \cup X_1) \res \delta_0$ has $(\le \! 2)$-to-one drop-downs to $\beta$. 
	We also know that for all $x \in X_0 \cup X_1$, $F(x \res \beta) = 1$. 
	By Proposition 5.4 (Extension), we can fix an $\mathcal S$-splitting subtree function 
	$F_0$ extending $F$ with top level $\delta_0$ which fulfills every member of $\Gamma$ and 
	satisfies that for all $x \in X_0 \cup X_1$, $F_0(x \res \delta_0) = 1$.
	Inductive hypotheses (1) and (2) are immediate, 
	and (3) follows from the fact that 
	$(\bigcup_m X_m) \res \delta_0 = (X_0 \cup X_1) \res \delta_0$.

	Now let $n < \omega$ be given and assume that $Z_n$ and $F_n$ are defined as required. 	
	Let $h(n) = (i,k_0,k_1,k_2)$. 
	We begin by defining $Z_{n+1}$ satisfying the following properties:
	\begin{itemize}
	\item $Z_{n+1}$ has unique drop-downs to $\delta_n$;
	\item $Z_{n+1} \res \delta_n$ and $(\bigcup_m X_m) \res \delta_n$ are disjoint;
	\item for all $x \in Z_{n+1}$, $F_n(x \res \delta_n) = 1$.
	\end{itemize}
	Note that these properties imply inductive hypotheses (1) and (2) for $n+1$.
	
	\emph{Case 1:} $i = 0$. 
	Consider $y_{k_0}$. 
	If either (a) $\h_T(y_{k_0}) > \delta_n$, (b) $\h_T(y_{k_0}) \le \delta_n$ 
	and $F_n(y_{k_0}) = 0$, or (c) $\h_T(y_{k_0}) \le \delta_n$ and 
	$y_{k_0} \in (Z_n \cup \bigcup_m X_m) \res \h_T(y_{k_0})$, then let $Z_{n+1} = Z_n$. 
	Now assume that $\h_T(y_{k_0}) \le \delta_n$, $F_n(y_{k_0}) = 1$, and 
	$y_{k_0}$ has nothing above it in $Z_n \cup (\bigcup_m X_m)$. 
	Since $F_n$ is a subtree function, we can 
	fix some $y' \ge_T y_{k_0}$ with height $\delta_n$ such that $F_n(y') = 1$. 
	As $T$ is normal, let $z \in T_\delta$ be above $y'$. 
	Define $Z_{n+1} = Z_n \cup \{ z \}$. 
	By the inductive hypothesis it easily follows that 
	$Z_{n+1}$ has unique drop-downs to $\delta_n$. 
	Since $Z_{n} \res \delta_n$ and $(\bigcup_m X_m) \res \delta_n$ are disjoint 
	and $y'$ is not in $(\bigcup_m X_m) \res \delta_{n}$, 
	it follows that $Z_{n+1} \res \delta_{n}$ and $(\bigcup_m X_m) \res \delta_{n}$ 
	are disjoint.

	\emph{Case 2:} $i = 1$. 
	Consider $\mathcal U_{k_0}$, 
	$\mathcal Y^{k_0}_{k_1}$, and $t_{k_2}$. 
	Let $l$ be the dimension of $\mathcal U_{k_0}$. 
	Applying the fact that $F_n$ fulfills $\mathcal U_{k_0}$, we can find 
	$\vec a = (a_0,\ldots,a_{l-1}) \in \mathcal Y^{k_0}_{k_1} \res \delta_n$ 
	which is disjoint from 
	$$
	(Z_n \cup t_{k_2} \cup X_0 \cup \cdots \cup X_{n+1}) \res \delta_n
	$$
	such that for all $i < l$, $F_n(a_i) = 1$. 
	Fix $\vec a^+ = (a_0^+,\ldots,a_{l-1}^+) \in \mathcal Y^{k_0}_{k_1}$ above $\vec a$. 
	Since $\vec a$ is injective, $\vec a^+$ has unique drop-downs to $\delta_n$. 
	Note that $\vec a^+$ is disjoint from $t_{k_2}$. 
	Define $Z_{n+1} = Z_n \cup \{ a_0^+,\ldots,a_{l-1}^+ \}$. 
	By the inductive hypothesis and the choice of $\vec a$, 
	it easily follows that $Z_{n+1}$ has unique drop-downs to $\delta_n$. 
	Also by the inductive hypothesis and the choice of $\vec a$, 
	$Z_{n+1} \res \delta_n$ and $(X_0 \cup \cdots \cup X_{n+1}) \res \delta_n$ 
	are disjoint. 
	But $(X_0 \cup \cdots \cup X_{n+1}) \res \delta_n = (\bigcup_m X_m) \res \delta_n$, 
	so $Z_{n+1} \res \delta_n$ and $(\bigcup_m X_m) \res \delta_n$ are disjoint.

	This completes the definition of $Z_{n+1}$ in both cases. 
	We know that 
	for all $x \in Z_{n+1} \cup X_0 \cup \cdots \cup X_{n+2}$, $F_n(x \res \delta_n) = 1$. 
	Also, $(\bigcup_m X_m) \res \delta_{n+1} = 
	(X_0 \cup \cdots \cup X_{n+2}) \res \delta_{n+1}$, 
	which by hypothesis and Lemma 4.5 has $(\le \! 2)$-to-one drop-downs to $\delta_{n}$. 
	Since $Z_{n+1}$ has unique drop-downs to $\delta_n$ and 
	$Z_{n+1} \res \delta_n$ and $(\bigcup_m X_m) \res \delta_n$ are disjoint, 
	it easily follows that $(Z_{n+1} \cup X_0 \cup \cdots \cup X_{n+2}) \res \delta_n$ 
	has $(\le \! 2)$-to-one drop-downs to $\delta_n$. 
	By Proposition 5.4 (Extension), fix an $\mathcal S$-splitting subtree function $F_{n+1}$ 
	extending $F_n$ with top level $\delta_{n+1}$ which fulfills every member of $\Gamma$ 
	and satisfies that $F_{n+1}(x \res \delta_{n+1}) = 1$ for all 
	$x \in Z_{n+1} \cup X_0 \cup \cdots \cup X_{n+2}$. 
	Since $(X_0 \cup \cdots \cup X_{n+2}) \res \delta_{n+1} = 
	(\bigcup_m X_m) \res \delta_{n+1}$, for all $x \in \bigcup_m X_m$, 
	$F_{n+1}(x \res \delta_{n+1}) = 1$.

	This completes the construction. 
	Now define $F^+$ extending $\bigcup_m F_m$ so that for all $x \in T_{\delta}$, 
	$F^+(x) = 1$ if $x \in \bigcup_m Z_m \cup \bigcup_m X_m$, and $F^+(x) = 0$ otherwise. 
	It is routine to check that $F^+$ is as required.
\end{proof}

\section{The Forcing Poset}

We are now ready to introduce our forcing poset for adding an uncountable 
downwards closed $\mathcal S$-splitting normal subtree of $T$.

\begin{definition}
	Let $\p(T,\mathcal S)$ be the forcing poset consisting of conditions which are pairs 
	$(F,\Gamma)$, where $F$ is an $\mathcal S$-splitting subtree function, 
	$\Gamma$ is a countable collection of generalized promises with base levels 
	less than or equal to the top level of $F$, 
	and $F$ fulfills every member of $\Gamma$. 
	Let $(G,\Sigma) \le (F,\Gamma)$ if $F \subseteq G$ and $\Gamma \subseteq \Sigma$.
\end{definition}

We abbreviate $\p(T,\mathcal S)$ as $\p$ for Sections 6-8 and 10.

If $p = (F,\Gamma) \in \p$ and $\beta$ is the top level of $F$, then we say that 
$\beta$ is the top level of $p$. 

\begin{proposition}[Extension]
	Let $(F,\Gamma) \in \p$ have top level $\beta$. 
	Let $\beta \le \gamma < \omega_1$ and let $X \subseteq T_\gamma$ be finite. 
	Assume that $X$ has $\les$-to-one drop-downs to $\beta$ 
	and for all $x \in X$, $F(x \res \beta) = 1$. 
	Then there exists some $G$ such that 
	$(G,\Gamma) \le (F,\Gamma)$, $(G,\Gamma)$ has top level $\gamma$, 
	and for all $x \in X$, $G(x) = 1$.
\end{proposition}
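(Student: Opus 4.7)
The plan is to reduce this proposition directly to Proposition 5.4 (Extension), which was proven for $\mathcal{S}$-splitting subtree functions rather than for conditions in $\p$. The key observation is that the hypothesis of Proposition 6.2 matches the hypothesis of Proposition 5.4 almost verbatim, since $(F,\Gamma) \in \p$ by definition means that $F$ is an $\mathcal{S}$-splitting subtree function with top level $\beta$, $\Gamma$ is a countable collection of generalized promises with base levels $\le \beta$, and $F$ fulfills every member of $\Gamma$.

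First I would invoke Proposition 5.4 applied to $F$, $\Gamma$, $\gamma$, and $X$. The hypotheses are all present: $F$ is $\mathcal{S}$-splitting of top level $\beta$; $\Gamma$ is a countable collection of generalized promises with base levels $\le \beta$; $F$ fulfills each; $X \subseteq T_\gamma$ is finite with $\les$-to-one drop-downs to $\beta$; and $F(x \res \beta) = 1$ for every $x \in X$. The conclusion yields an $\mathcal{S}$-splitting subtree function $G$ extending $F$ with top level $\gamma$ such that $G(x) = 1$ for all $x \in X$ and $G$ fulfills every member of $\Gamma$.

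Next I would verify that $(G,\Gamma) \in \p$. The only thing to check is that the base level of every generalized promise in $\Gamma$ is $\le \gamma$, which is immediate since each such base level is $\le \beta \le \gamma$. The remaining clauses of Definition 6.1 follow directly from the properties of $G$ provided by Proposition 5.4. Finally, since $F \subseteq G$ and $\Gamma \subseteq \Gamma$, we have $(G,\Gamma) \le (F,\Gamma)$, and by construction $(G,\Gamma)$ has top level $\gamma$ and satisfies $G(x) = 1$ for all $x \in X$.

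There is no real obstacle here; the work was already done in Proposition 5.4. This lemma is essentially a bookkeeping statement that rephrases the Extension principle at the level of the forcing poset, isolating it for convenient use later when we extend conditions to meet dense sets or to cover arbitrary heights below $\omega_1$.
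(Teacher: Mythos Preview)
Your proposal is correct and is exactly the paper's approach: the paper's proof is the single line ``Immediate from Proposition 5.4 (Extension).'' You have simply unpacked that sentence by checking that the hypotheses of Proposition~5.4 are met and that the resulting $G$ makes $(G,\Gamma)$ a condition in $\p$ below $(F,\Gamma)$.
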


\begin{proof}
	Immediate from Proposition 5.4 (Extension).
\end{proof}

\begin{proposition}[Adding Generalized Promises]
	Let $(F,\Gamma) \in \p$ have top level $\beta$. 
	Suppose that $\mathcal U$ is a generalized promise with base level $\delta > \beta$ 
	which is suitable for $F$. 
	Then there exists some $G$ such that 
	$(G,\Gamma \cup \{ \mathcal U \}) \in \p$, 
	$(G,\Gamma \cup \{ \mathcal U \}) \le (F,\Gamma)$, and $G$ has top level $\delta$.
\end{proposition}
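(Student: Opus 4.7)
The plan is to reduce this proposition directly to Proposition 5.6, which does all of the real work; the present statement is essentially just a repackaging in terms of the forcing poset $\p$.

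First I would check that the hypotheses of Proposition 5.6 are satisfied. Since $(F,\Gamma) \in \p$, by Definition 6.1 we have that $F$ is an $\mathcal S$-splitting subtree function with top level $\beta$, $\Gamma$ is a countable collection of generalized promises whose base levels are at most $\beta$, and $F$ fulfills every member of $\Gamma$. By hypothesis, $\mathcal U$ is a generalized promise with base level $\delta > \beta$ which is suitable for $F$. These are exactly the hypotheses of Proposition 5.6.

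Next I would apply Proposition 5.6 to obtain an $\mathcal S$-splitting subtree function $F^+$ extending $F$ with top level $\delta$ such that $F^+$ fulfills every member of $\Gamma \cup \{\mathcal U\}$. Setting $G = F^+$, it remains to verify that $(G, \Gamma \cup \{\mathcal U\}) \in \p$ and that this is an extension of $(F,\Gamma)$. For membership in $\p$: $G$ is $\mathcal S$-splitting with top level $\delta$; the collection $\Gamma \cup \{\mathcal U\}$ is countable; every member of $\Gamma$ has base level at most $\beta < \delta$ and $\mathcal U$ has base level $\delta$, so all base levels in $\Gamma \cup \{\mathcal U\}$ are at most $\delta$; and $G$ fulfills every member of $\Gamma \cup \{\mathcal U\}$ by the conclusion of Proposition 5.6. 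For the extension relation, $F \subseteq G$ and $\Gamma \subseteq \Gamma \cup \{\mathcal U\}$, so $(G, \Gamma \cup \{\mathcal U\}) \le (F,\Gamma)$ by Definition 6.1, and $G$ has top level $\delta$ as required.

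There is no serious obstacle here beyond invoking Proposition 5.6; the main point is just to observe that the notion of ``condition in $\p$'' packages together precisely the input data needed for that proposition, and that the suitability hypothesis on $\mathcal U$ is preserved from the statement to the application.
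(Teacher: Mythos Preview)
Your argument is correct and matches the paper's own proof, which simply says ``Immediate from Proposition 5.7 (Adding Generalized Promises).'' The only issue is a numbering slip: the result you invoke---producing an $\mathcal S$-splitting subtree function $F^+ \supseteq F$ with top level $\delta$ fulfilling every member of $\Gamma \cup \{\mathcal U\}$---is Proposition~5.7, not Proposition~5.6 (Lemma~5.6 is the statement that a promise gives rise to a suitable generalized promise).
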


\begin{proof}
	Immediate from Proposition 5.7 (Adding Generalized Promises).
\end{proof}

The following result is the main tool for showing that $\p$ is totally proper.

\begin{proposition}[Consistent Extensions Into Dense Sets]
	Let $\lambda$ be a large enough regular cardinal. 
	Assume:
	\begin{itemize}
	\item $N \prec H(\lambda)$ is countable, and $T$ and $\p$ are in $N$;
	\item $(F,\Gamma) \in N \cap \p$ has top level $\beta$;
	\item $D$ is a dense open subset of $\p$ in $N$;
	\item $X \subseteq T_{N \cap \omega}$ is finite, 
	$X$ has $\les$-to-one drop-downs to $\beta$, 
	and for all $x \in X$, $F(x \res \beta) = 1$.
	\end{itemize}
	Then there exists a condition $(G,\Sigma) \le (F,\Gamma)$ in $N \cap D$ with 
	some top level $\gamma$ such that for all $x \in X$, $G(x \res \gamma) = 1$.
\end{proposition}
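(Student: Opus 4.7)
The plan is to combine elementarity with Proposition 6.2, Proposition 6.3, and Lemmas 4.6 and 5.6. First, I would find $\beta' \in N \cap [\beta, \omega_1)$ such that $\vec x_0 := X \res \beta'$ consists of $m := |X|$ distinct elements; such a $\beta'$ exists by elementarity since the pairwise meets of elements of $X$ lie at countable heights below $N \cap \omega_1$, and $N \cap \omega_1$ is a limit ordinal. Applying Proposition 6.2 inside $N$ (legitimate by the $\les$-to-one hypothesis on $X$), extend $(F, \Gamma)$ to $(F_1, \Gamma) \in N$ with top level $\beta'$ and $F_1((\vec x_0)_i) = 1$ for all $i < m$. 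Applying Lemma 4.6 inside $N$ to the promise $T_{\vec x_0}$, and then Lemma 5.6, I obtain a generalized promise $\mathcal V \in N$ with base level $\delta \in N$ which is suitable for $F_1$; Proposition 6.3 (inside $N$) then yields $(F_2, \Gamma \cup \{\mathcal V\}) \in N$ with top level $\delta$.

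The central step is a contradiction argument built around the set
\[
B = \{\vec a \in T_{\vec x_0} : \exists (H, \Sigma) \le (F_2, \Gamma \cup \{\mathcal V\}) \text{ in } D \text{ with top level } \h_T(\vec a) \text{ and } H(a_i) = 1 \text{ for all } i < m\},
\]
which is an element of $N$. I would aim to show that $X \res \gamma \in B$ for some $\gamma \in N \cap [\beta', \omega_1)$; once established, since $X \res \gamma \in N$, elementarity applied to the witnessing existential in the definition of $B$ yields the required $(G, \Sigma) \in D \cap N$ below $(F, \Gamma)$ with top level $\gamma$ and $G(x \res \gamma) = 1$ for all $x \in X$. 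Suppose for contradiction that $X \res \gamma \notin B$ for every such $\gamma$. Thicken the chain $\{X \res \gamma : \beta' \le \gamma < \omega_1\}$ in $V$ into an uncountable downwards closed promise $U \subseteq T_{\vec x_0} \setminus B$ with every element having uncountably many elements above it in $U$; convert $U$ via Lemma 5.6 into a generalized promise $\mathcal W$, and add it via Proposition 6.3 on top of a suitably high extension of $(F_2, \Gamma \cup \{\mathcal V\})$. Density of $D$ then produces $(H, \Sigma) \in D$ below the resulting condition, and fulfillment of $\mathcal W$ by $H$ yields some $\vec a \in U$ with $H(a_i) = 1$ at the top level of $H$; this places $\vec a$ in $B$, contradicting $\vec a \in U \subseteq T_{\vec x_0} \setminus B$.

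The main obstacle is justifying the thickening step: given that the full chain through $X$ at levels in $N$ avoids $B$, one must extract an uncountable downwards closed promise $U \subseteq T_{\vec x_0} \setminus B$ that contains this chain and has every element with uncountably many elements above in $U$. This calls for a transfinite construction carried out in $V$ that exploits both the combinatorial richness of the derived tree $T_{\vec x_0}$ (using Lemma 1.1(a) to supply padding near the branch through $X$) and the $\les$-to-one hypothesis on $X$ to control how the padding relates to $B$; once the thickening is established, the remainder of the argument is a direct application of the forcing apparatus developed in Sections 4--6.
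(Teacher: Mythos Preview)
Your overall architecture matches the paper's: reduce to unique drop-downs, set up a ``bad'' set in $N$, extract a promise from it, convert to a suitable generalized promise, add it, extend into $D$, and read off a contradiction from fulfillment. The detour through $\mathcal V$ is unnecessary---the paper never adds a preliminary generalized promise before defining the bad set---but it does no harm.

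The real problem is your ``thickening'' step. You describe it as a transfinite construction in $V$ that pads out the single branch $\{X \res \gamma : \beta' \le \gamma < N\cap\omega_1\}$ using Lemma~1.1(a), and you flag this as the main obstacle. That is the wrong picture, and the construction you sketch will not work: a single countable branch gives you no leverage to produce an uncountable normal subtree inside $T_{\vec x_0}\setminus B$, and Lemma~1.1(a) needs an uncountable downwards closed set as \emph{input}, not as output. No padding or $\les$-to-one bookkeeping will manufacture one.

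What actually does the work is elementarity, and it is much simpler than what you propose. First, $B\in N$, since it is defined from parameters in $N$. Second, $T_{\vec x_0}\setminus B$ is downwards closed: if $\vec a\in B$ via some witness $(H,\Sigma)\in D$ with top level $\h(\vec a)$, then for any $\vec a'>\vec a$ apply Proposition~6.2 (Extension) to get $(H',\Sigma)\le(H,\Sigma)$ with top level $\h(\vec a')$ and $H'(a'_i)=1$; openness of $D$ gives $(H',\Sigma)\in D$, so $\vec a'\in B$. Third, $T_{\vec x_0}\setminus B$ is uncountable: if it were bounded below some $\zeta$, then by elementarity we may take $\zeta\in N$, whence $X\res\zeta\in B$, and a witness in $N$ (again by elementarity, since $X\res\zeta\in N$) contradicts your hypothesis. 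Now Lemma~4.2 applied \emph{inside $N$} hands you the promise $U\subseteq T_{\vec x_0}\setminus B$ directly, with $U\in N$; the rest of your argument then goes through. The paper streamlines this further by defining the bad set $W$ so that downward closure is immediate from the definition (quantifying over all $(G,\Sigma)\in D$ with top level $\le$ the height of the tuple, rather than exactly equal), avoiding even the appeal to Extension.
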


\begin{proof}
	Since $T$ is normal, we can 
	fix $\xi < N \cap \omega_1$ greater than or equal to 
	$\beta$ such that $X$ has unique drop-downs to $\xi$. 
	By Lemma 4.5, $X \res \xi$ has $\les$-to-one drop-downs to $\beta$. 
	Applying Proposition 6.2 (Extension) in $N$, find $F'$ in $N$ such that 
	$(F',\Gamma) \le (F,\Gamma)$, $(F',\Gamma)$ has top level $\xi$, 
	and for all $x \in X$, $F'(x \res \xi) = 1$. 
	Now to simplify notation, let us just assume that $\beta = \xi$ and 
	$F = F'$.

	Fix some injective tuple 
	$\vec a = (a_0,\ldots,a_{m-1})$ which lists the elements of $X$. 
	Suppose for a contradiction that for every $(G,\Sigma) \le (F,\Gamma)$ in $N \cap D$ 
	with some top level $\gamma$, 
	there exists $j < m$ such that $G(a_j \res \gamma) = 0$. 
	Let $x_i := a_i \res \beta$ for all $i < m$ and let 
	$\vec x = (x_0,\ldots,x_{m-1})$.

	Define $W$ as the set of all tuples $\vec b = (b_0,\ldots,b_{m-1})$ in 
	the derived tree $T_{\vec x}$ 
	satisfying that whenever $(G,\Sigma) \le (F,\Gamma)$ is in $D$ and has some 
	top level $\gamma$, where 
	the height of the elements of $\vec b$ is greater than or equal to $\gamma$, 
	then for some $j < m$, $G(b_j \res \gamma) = 0$. 
	By elementarity, $W \in N$. 
	Note that $W$ is downwards closed in $T_{\vec x}$.

	We claim that $W$ is uncountable. 
	If not, then by elementarity there exists some $\zeta \in N \cap \omega_1$ 
	greater than $\beta$ 
	such that every member of $W$ is in $(T \res \zeta)^m$. 
	We claim that $\vec a \res \zeta$ is in $W$, which is a contradiction. 
	Otherwise, there exists some $(G,\Sigma) \le (F,\Gamma)$ in $D$ with some top 
	level $\gamma$, where $\gamma \le \zeta$, such that for all $j < m$, 
	$G(a_j \res \gamma) = 1$. 
	Since $(a_0 \res \zeta,\ldots,a_{m-1} \res \zeta) \in N$, 
	by elementarity we may assume that $(G,\Sigma)$ is in $N$. 
	So $(G,\Sigma) \le (F,\Gamma)$ is in $N \cap D$ and for all $j < m$, 
	$G(a_j \res \gamma) = 1$, contradicting our assumptions.

	Apply Lemma 4.2 to fix a promise $U \subseteq W$ with the same root as $W$, 
	which by elementarity 
	we may assume is in $N$. 
	The root of $U$ is $\vec x$, so $\beta$ 
	is both the base level of $U$ and also the top level of $(F,\Gamma)$. 
	Also, $F(x_i) = 1$ for all $i < m$. 
	Applying Lemma 5.6, fix a 
	countable limit ordinal $\delta > \beta$ such that 
	the function $\mathcal U$ with domain $[\delta,\omega_1)$ 
	defined by $\mathcal U(\gamma) = \{ U \cap T_\gamma^m \}$ is a generalized 
	promise which is suitable for $F$. 
	By elementarity, we may assume that $\delta$ and $\mathcal U$ are in $N$. 
	Apply Proposition 6.3 (Adding Generalized Promises) in $N$ to find some $G$ in $N$ 
	such that $(G,\Gamma \cup \{ \mathcal U \}) \in \p$, 
	$(G,\Gamma \cup \{ \mathcal U \}) \le (F,\Gamma)$, and 
	$G$ has top level $\delta$. 

	Now fix $(H,\Lambda) \le (G,\Gamma \cup \{ \mathcal U \})$ in $N \cap D$. 
	Let $\gamma$ be the top level of $H$. 
	Since $H$ fulfills $\mathcal U$ and $\mathcal U(\gamma) = \{ U \cap T_\gamma^m \}$, 
	there exists a tuple $\vec b = (b_0,\ldots,b_{m-1}) \in U \cap T_\gamma^m$ 
	such that for all $i < m$, $H(b_i) = 1$. 
	Since $U \subseteq W$, $\vec b \in W$. 
	But the condition $(H,\Lambda)$ witnesses that $\vec b \notin W$, and 
	we have a contradiction.
\end{proof}

\begin{thm}[Existence of Total Master Conditions]
	Let $\lambda$ be a large enough regular cardinal. 
	Assume:
	\begin{itemize}
	\item $N \prec H(\lambda)$ is countable, and $T$ and $\p$ are in $N$;
	\item $(F,\Gamma) \in N \cap \p$ has top level $\beta$;
	\item $X \subseteq T_{N \cap \omega}$ is finite, $X$ has 
	$\les$-to-one drop-downs to $\beta$, 
	and for all $x \in X$, $F(x \res \beta) = 1$;
	\item $h$ is a function defined on the set of all generalized 
	promises on $T$ which lie in $N$ 
	such that for any $\mathcal U \in \dom(h)$ with some dimension $l$ 
	and base level $\gamma$, 
	$h(\mathcal U)$ is a generalized promise on $T$ with dimension $l$ and base level $\gamma$ 
	such that $\mathcal U \res [\gamma,N \cap \omega_1) = 
	h(\mathcal U) \res [\gamma,N \cap \omega_1)$.
	\end{itemize}
	Then there exists a condition $(G,\Sigma) \le (F,\Gamma)$ 
	with top level $N \cap \omega_1$ such that $\Sigma \subseteq N$, 
	$(G,\Sigma)$ is a total master condition for $N$, for all $x \in X$, $G(x) = 1$, 
	and for any $\mathcal U \in \Sigma$, $F$ fulfills $h(\mathcal U)$.
\end{thm}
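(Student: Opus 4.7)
The plan is to carry out a standard total master condition construction for countable support proper forcing, woven together with a bookkeeping process at level $\delta := N \cap \omega_1$ in the style of the limit case of Proposition 5.4.

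First I fix an enumeration $\langle D_n : n < \omega \rangle$ of the dense open subsets of $\p$ in $N$, an increasing sequence $\langle \beta_n : n < \omega \rangle$ cofinal in $\delta$ with $\beta_0 \ge \beta$, and a bookkeeping surjection onto $2 \times \omega^3$ with infinite preimages. The bookkeeping will assign to each stage either a subtree-extension task for some $y \in T \res \delta$, or a promise-fulfillment task for a triple $(\mathcal V,\mathcal Y,t)$ where $\mathcal V$ is either a $\mathcal U$ appearing in some $\Gamma_n$ or an image $h(\mathcal U)$ of such, $\mathcal Y \in \mathcal V(\delta)$, and $t \subseteq T_\delta$ is finite. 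Because $\Sigma := \bigcup_n \Gamma_n$ is only determined during the construction, a dynamic enumeration is used so that every eventual task is reached.

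I then recursively construct a decreasing sequence $\langle (F_n,\Gamma_n) : n < \omega \rangle$ of conditions in $N \cap \p$ with top levels $\xi_n$ and an increasing sequence $\langle Z_n : n < \omega \rangle$ of finite subsets of $T_\delta$, preserving the invariants: $(F_0,\Gamma_0) = (F,\Gamma)$, $Z_0 = X$, $\xi_n \ge \beta_n$, $(F_{n+1},\Gamma_{n+1}) \in D_n$, $Z_n$ has $\les$-to-one drop-downs to $\xi_n$, and $F_n(z \res \xi_n) = 1$ for all $z \in Z_n$. At stage $n$ the current task updates $Z_n$ to $Z_{n+1}$: for a subtree-extension task for some $y \in T \res \delta$ with $F_n(y) = 1$ not yet witnessed, use the subtree property of $F_n$ to pick $y' \ge_T y$ at level $\xi_n$ with $F_n(y') = 1$ and $y' \notin Z_n \res \xi_n$, extend $y'$ to $z \in T_\delta$ by normality, and set $Z_{n+1} = Z_n \cup \{z\}$; for a promise-fulfillment task, observe that because $h(\mathcal U)$ agrees with $\mathcal U$ on $[\gamma,\delta)$, the projection $\mathcal Y \res \xi_n$ lies in $\mathcal U(\xi_n)$ whether $\mathcal V = \mathcal U$ or $\mathcal V = h(\mathcal U)$. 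Fulfillment of $\mathcal U$ by $F_n$ then yields $\vec a \in \mathcal Y \res \xi_n$ disjoint from $(Z_n \cup t) \res \xi_n$ with $F_n(\vec a) = 1$; lift $\vec a$ to $\vec a^+ \in \mathcal Y$ via the projection property of generalized promises and set $Z_{n+1} = Z_n \cup \{a_0^+, \dots, a_{l-1}^+\}$. In both cases disjointness from $Z_n \res \xi_n$ preserves the $\les$-to-one drop-down invariant. Applying Proposition 6.4 (Consistent Extensions) to $(F_n,\Gamma_n)$, $Z_{n+1}$, and $D_n$ intersected with the dense open set of conditions with top level $\ge \beta_n$, I obtain $(F_{n+1},\Gamma_{n+1}) \in N \cap D_n$ with top level $\xi_{n+1} \ge \beta_n$ satisfying $F_{n+1}(z \res \xi_{n+1}) = 1$ for $z \in Z_{n+1}$; Lemma 4.5 then re-establishes the invariant at level $\xi_{n+1}$.

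At the end I set $\Sigma = \bigcup_n \Gamma_n$ and let $G$ be the subtree function on $T \res (\delta+1)$ extending $\bigcup_n F_n$ with $G(x) = 1$ precisely when $x \in \bigcup_n Z_n$ for $x \in T_\delta$. Since each $\Gamma_n$ is countable and lies in $N$, $\Sigma \subseteq N$; the $\mathcal S$-splitting and subtree properties below $\delta$ are inherited from the $F_n$'s; the bookkeeping ensures the subtree extension property at level $\delta$ together with fulfillment of every $\mathcal U \in \Sigma$ and every $h(\mathcal U)$ by $G$; $G(x) = 1$ for $x \in X = Z_0$; and for every dense open $D \in N$ we have $D = D_n$ for some $n$, so the witness $(F_{n+1},\Gamma_{n+1}) \in N \cap D_n$ is extended by $(G,\Sigma)$, so that $(G,\Sigma)$ is a total master condition for $N$ below $(F,\Gamma)$. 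The main obstacle is choreographing the bookkeeping so that the $\les$-to-one drop-down property of $Z_n$ survives each task, because this is exactly what licenses the use of Proposition 6.4 at each stage: for promise-fulfillment tasks this is absorbed into the parameter $t$ via the definition of fulfillment, and for subtree-extension tasks the choice of $y'$ outside $Z_n \res \xi_n$ is possible precisely because the task only fires when $y$ has not already been handled.
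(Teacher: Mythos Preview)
Your proof is correct and follows essentially the same approach as the paper's. The only organizational differences are cosmetic: you fold the dense-set extension (the paper's Case~2) into every stage rather than making it a separate bookkeeping case, you use a dynamic enumeration of the promises in $\Sigma$ where the paper enumerates all generalized promises in $N$ upfront and checks membership in $\Gamma_n$, and you force $\sup_n \xi_n = \delta$ explicitly via the $\beta_n$'s while the paper gets this implicitly from the dense sets $\{(H,\Lambda): \text{top level} \ge \alpha\}$ for $\alpha \in N$ appearing among the $D_n$.
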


For the purpose of proving that $\p$ is totally proper, we can ignore 
the third and fourth bullet points of Theorem 6.5 
(or let $X = \emptyset$ and let $h$ be the identity function). 
In Section 10, 
the third bullet point is used to show that $\p$ is $(< \! \omega_1)$-proper and 
the fourth bullet point 
is used to prove that $\p$ has the $\omega_2$-p.i.c.

\begin{proof}
	To help with the construction of $(G,\Sigma)$, 
	fix the following objects:
	\begin{itemize}
	\item an enumeration $\langle y_n : n < \omega \rangle$ of $T \res (N \cap \omega_1)$;
	\item an enumeration $\langle D_n : n < \omega \rangle$ of all dense open 
	subsets of $\p$ which lie in $N$;
	\item an enumeration 
	$\langle t_n : n < \omega \rangle$ of all finite subsets 
	of $T_{N \cap \omega_1}$;
	\item an enumeration $\langle \mathcal U_n : n < \omega \rangle$ 
	of all generalized promises on $T$ which lie in $N$;
	\item for every $n < \omega$, enumerations 
	$\langle \mathcal Y^n_k : k < \omega \rangle$ and 
	$\langle \mathcal Z^n_k : k < \omega \rangle$ of $\mathcal U_n(N \cap \omega_1)$ 
	and $h(\mathcal U_n)(N \cap \omega_1)$ respectively (possibly with repetitions);
	\item a surjection $g : \omega \to 3 \times \omega^3$ 
	such that every member of the codomain has an infinite preimage.
	\end{itemize}

	We define by induction sequences 
	$\langle (F_n,\Gamma_n) : n < \omega \rangle$, 
	$\langle \gamma_n : n < \omega \rangle$, 
	and $\langle Z_n : n < \omega \rangle$ satisfying the following properties:
	\begin{enumerate}
	\item $(F_0,\Gamma_0) = (F,\Gamma)$, $\gamma_0 = \beta$, and $Z_0 = X$;
	\item for all $n < \omega$, $(F_n,\Gamma_n) \in N \cap \p$ has 
	top level $\gamma_n$ and $(F_{n+1},\Gamma_{n+1}) \le (F_n,\Gamma_n)$;
	\item $\langle Z_n : n < \omega \rangle$ is an increasing sequence of  
	finite subsets of $T_{N \cap \omega_1}$;
	\item for all $n < \omega$, $Z_n$ has $\les$-to-one drop-downs to $\gamma_n$, and 
	for all $x \in Z_n$, $F_n(x \res \gamma_n) = 1$.
	\end{enumerate}

	Let $(F_0,\Gamma_0) = (F,\Gamma)$, $\gamma_0 = \beta$, 
	and $Z_0 = X$. 
	Now let $n < \omega$ and assume that $(F_n,\Gamma_n)$, $\gamma_n$, 
	and $Z_n$ are defined and satisfy the required properties. 
	Let $g(n) = (i,k_0,k_1,k_2)$.

	\underline{Case 1:} $i = 0$. 
	We consider the element $y_{k_0} \in T \res (N \cap \omega_1)$. 
	If either (a) $\h_T(y_{k_0}) > \gamma_n$, (b) $\h_T(y_{k_0}) \le \gamma_n$ 
	and $F_n(y_{k_0}) = 0$, or (c) $\h_T(y_{k_0}) \le \gamma_n$ and 
	$y_{k_0} \in (Z_n \res \h_T(y_{k_0}))$, then let $Z_{n+1} = Z_n$.
	Assume that $\h_T(y_{k_0}) \le \gamma_n$, $F_n(y_{k_0}) = 1$, and $y_{k_0}$ is not 
	below a member of $Z_n$. 
	Since $F_n$ is an $\mathcal S$-splitting subtree function, we can fix some 
	$y' \in T_{\gamma_n}$ with $y_{k_0} \le_T y'$ such that $F_n(y') = 1$. 
	As $T$ is normal, fix $z \in T_{N \cap \omega_1}$ above $y'$. 
	Define $Z_{n+1} = Z_n \cup \{ z \}$. 
	By the inductive hypothesis, $Z_{n+1}$ 
	has $\les$-to-one drop-downs to $\gamma_n$. 
	Now in either case, 
	let $(F_{n+1},\Gamma_{n+1}) = (F_n,\Gamma_n)$ and $\gamma_{n+1} = \gamma_n$.
	
	\underline{Case 2:} $i = 1$. 
	Applying Proposition 6.4 (Consistent Extensions Into Dense Sets), 
	fix a condition $(F_{n+1},\Gamma_{n+1}) \le (F_n,\Gamma_n)$ in 
	$N \cap D_{k_0}$ with some top level $\gamma_{n+1}$ such that 
	for all $x \in Z_{n}$, $F_{n+1}(x \res \gamma_{n+1}) = 1$. 
	Note that $Z_{n}$ has $\les$-to-one drop-downs to $\gamma_{n+1}$ 
	by Lemma 4.5. 
	Define $Z_{n+1} = Z_n$.

	\underline{Case 3:} $i = 2$. 
	If $\mathcal U_{k_0} \notin \Gamma_n$, then let 
	$(F_{n+1},\Gamma_{n+1}) = (F_n,\Gamma_n)$, 
	$\gamma_{n+1} = \gamma_n$, and $Z_{n+1} = Z_n$.  
	Assume that $\mathcal U_{k_0} \in \Gamma_n$. 
	Let $l$ be the dimension of $\mathcal U_{k_0}$. 
	Since $(F_n,\Gamma_n)$ is a condition, $F_n$ fulfills $\mathcal U_{k_0}$. 
	As $\mathcal Y^{k_0}_{k_1} \res \gamma_n \in \mathcal U_{k_0}(\gamma_n)$, 
	we can find a tuple $\vec d = (d_0,\ldots,d_{l-1})$ in 
	$\mathcal Y^{k_0}_{k_1} \res \gamma_n$ which is disjoint from 
	$(t_{k_2} \cup Z_n) \res \gamma_n$ and satisfies that 
	for all $i < l$, $F_n(d_i) = 1$. 
	Now $\mathcal U_{k_0} \res [\gamma_n,N \cap \omega_1) = 
	h(\mathcal U_{k_0}) \res [\gamma_n,N \cap \omega_1)$. 
	So $\mathcal Z^{k_0}_{k_1} \res \gamma_n \in h(\mathcal U_{k_0})(\gamma_n) = 
	\mathcal U_{k_0}(\gamma_n)$. 
	Again since $F_n$ fulfills $\mathcal U_{k_0}$, 
	we can find a tuple 
	$\vec e = (e_0,\ldots,e_{l-1})$ in 
	$\mathcal Z^{k_0}_{k_1} \res \gamma_n$ which is disjoint from both $\vec d$ and  
	$(t_{k_2} \cup Z_n) \res \gamma_n$ and satisfies that 
	for all $i < l$, $F_n(e_i) = 1$. 

	Fix tuples $\vec d^+ = (d_0^+,\ldots,d_{l-1}^+)$ 
	and $\vec e^+ = \{ e_0^+,\ldots,e_{l-1}^+ \}$ 
	which are above $\vec d$ and $\vec e$ and in $\mathcal Y^{k_0}_{k_1}$ 
	and $\mathcal Z^{k_0}_{k_1}$ respectively. 
	Observe that $\{ d_0^+,\ldots,d_{l-1}^+,e_0^+,\ldots,e_{l-1}^+ \}$ has 
	unique drop-downs to $\gamma_n$. 
	Define $Z_{n+1} = Z_n \cup \{ d_0^+,\ldots,d_{l-1}^+,e_0^+,\ldots,e_{l-1}^+ \}$. 
	Note that by the inductive hypothesis and the choice of $\vec d$ and $\vec e$,  
	$Z_{n+1}$ has $\les$-to-one drop-downs to $\gamma_n$. 
	Finally, let $(F_{n+1},\Gamma_{n+1}) = (F_n,\Gamma_n)$ 
	and $\gamma_{n+1} = \gamma_n$.

	This completes the induction. 
	Define a condition $(G,\Sigma)$ as follows. 
	Let $\Sigma = \bigcup_n \Gamma_n$. 
	Define $G$ which extends $\bigcup_n F_n$ so that for all $x \in T_{N \cap \omega_1}$, 
	$G(x) = 1$ if $x \in \bigcup_n Z_n$, and $G(x) = 0$ otherwise.
	We leave it to the reader to check that this works.
\end{proof}

\begin{corollary}
	The forcing poset $\p$ is totally proper.
\end{corollary}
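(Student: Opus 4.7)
The plan is to derive the corollary as a direct consequence of Theorem 6.5. Recall that total properness means that $\p$ is proper and does not add reals, and a standard fact is that both properties follow simultaneously from the existence of total master conditions extending any given condition. So the whole task reduces to invoking Theorem 6.5 with the two ``auxiliary'' data $X$ and $h$ chosen trivially, and then unwinding definitions.

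Concretely, let $\lambda$ be a large enough regular cardinal, let $N \prec H(\lambda)$ be countable with $T,\p \in N$, and let $(F,\Gamma) \in N \cap \p$ be arbitrary with some top level $\beta$. Set $X = \emptyset$, which trivially has $\les$-to-one drop-downs to $\beta$ and vacuously satisfies $F(x \res \beta) = 1$ for $x \in X$. Let $h$ be the identity function on the set of generalized promises on $T$ lying in $N$; for each such $\mathcal U$ with base level $\gamma$, the required condition $\mathcal U \res [\gamma,N \cap \omega_1) = h(\mathcal U)\res[\gamma,N \cap \omega_1)$ is trivially satisfied. Theorem 6.5 then produces a condition $(G,\Sigma) \le (F,\Gamma)$ which is a total master condition for $N$.

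It remains to verify that the existence of such total master conditions implies total properness. For properness, if $(G,\Sigma)$ is a total master condition for $N$ and $D \in N$ is a dense open subset of $\p$, then by definition there exists $v \in D \cap N$ with $(G,\Sigma) \le v$, so $(G,\Sigma) \Vdash v \in \dot G$ and hence $(G,\Sigma) \Vdash \dot G \cap D \cap N \neq \emptyset$; this is the standard reformulation of $(N,\p)$-genericity. For not adding reals, suppose $\dot r \in N$ is a name for a function $\omega \to V$. For each $n < \omega$ the set $D_n \in N$ of conditions deciding $\dot r(n)$ is dense open in $\p$, so by the defining property of a total master condition there exists $v_n \in D_n \cap N$ with $(G,\Sigma) \le v_n$. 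Each $v_n$ decides $\dot r(n)$ to a value in $N \cap V$, so $(G,\Sigma)$ forces $\dot r$ to equal a function in $V$.

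Since every $(F,\Gamma) \in N \cap \p$ admits such an extension $(G,\Sigma)$, $\p$ is both proper and adds no reals, and hence totally proper. There is no genuine obstacle: all the work is already carried out in the proof of Theorem 6.5, and this corollary is simply a matter of noting that the ``auxiliary'' hypotheses concerning $X$ and $h$ can be chosen trivially when one only needs total properness rather than the stronger conclusions used later in Section 10.
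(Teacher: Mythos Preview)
Your proposal is correct and follows exactly the approach the paper indicates: the paper itself remarks (just before the proof of Theorem 6.5) that for total properness one can take $X = \emptyset$ and $h$ the identity, and then states Corollary 6.6 without further proof. Your additional verification that total master conditions yield both properness and no new reals is the standard argument the paper is taking for granted.
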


It is routine to show that whenever 
$G$ is a generic filter on $\p$, then in $V[G]$ the set 
$$
U = \{ x \in T : \exists (F,\Gamma) \in G \ F(x) = 1 \}
$$
is an uncountable downwards closed subtree of $T$ which is normal and satisfies that 
for all $x \in U$, $|\Imm_U(x)| \in \mathcal S$.

The following lemma is easy.

\begin{lemma}
	Suppose that $(F,\Gamma)$ and $(F,\Sigma)$ are conditions in $\p$. 
	Then $(F,\Gamma \cup \Sigma) \in \p$ is a lower bound of both 
	$(F,\Gamma)$ and $(F,\Sigma)$.
\end{lemma}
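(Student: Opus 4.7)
The plan is to unpack the definition of a condition in $\p$ and verify each clause directly; no auxiliary constructions are needed. First I would recall that by Definition 6.1, a condition is a pair $(H,\Lambda)$ where $H$ is an $\mathcal S$-splitting subtree function, $\Lambda$ is a countable collection of generalized promises whose base levels are at most the top level of $H$, and $H$ fulfills every member of $\Lambda$.

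To show $(F,\Gamma \cup \Sigma) \in \p$, I would check these three clauses in turn. The first (that $F$ is an $\mathcal S$-splitting subtree function) is immediate from the assumption that $(F,\Gamma)$ is a condition. For the second clause, $\Gamma \cup \Sigma$ is the union of two countable collections, hence countable; and since both $\Gamma$ and $\Sigma$ consist of generalized promises with base levels at most the top level of $F$ (the top level being determined by $F$ and therefore the same in both conditions), the same holds for their union. For the fulfillment clause, every member of $\Gamma \cup \Sigma$ lies in either $\Gamma$ or $\Sigma$, and $F$ fulfills both sets by hypothesis, so $F$ fulfills every member of the union.

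Finally, the ordering relation in $\p$ (as given in Definition 6.1) declares $(G,\Sigma') \le (F,\Gamma')$ exactly when $F \subseteq G$ and $\Gamma' \subseteq \Sigma'$. Taking $G = F$ and $\Sigma' = \Gamma \cup \Sigma$, the containment $F \subseteq F$ is trivial, while $\Gamma \subseteq \Gamma \cup \Sigma$ and $\Sigma \subseteq \Gamma \cup \Sigma$ are set-theoretic tautologies. Hence $(F,\Gamma \cup \Sigma)$ is a common lower bound.

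There is no substantive obstacle here; the only thing to be careful about is that the two conditions share the same first coordinate $F$, so in particular they have a common top level, which is what allows the base-level bound in clause two of Definition 6.1 to transfer automatically to the union. For conditions with distinct $F$'s, one would need to do real work (an extension argument), but that is not the situation of the lemma.
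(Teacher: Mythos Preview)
Your proposal is correct and is precisely the direct verification the paper has in mind; the paper itself calls the lemma ``easy'' and omits the proof entirely, so your unpacking of Definition~6.1 is exactly what is expected.
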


\begin{corollary}
	Assuming \textsf{CH}, $\p$ is $\omega_2$-c.c.
\end{corollary}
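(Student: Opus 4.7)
The plan is to derive $\omega_2$-c.c.\ from Lemma 6.8 by a simple counting argument, using \textsf{CH} to bound the number of possible working parts $F$.

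First I would observe that any $\mathcal{S}$-splitting subtree function $F$ has as its domain the set $T \res (\beta+1)$ for some $\beta < \omega_1$. Since $T$ has countable levels, this domain is countable, so $F$ is essentially a function from a countable set into $2$. Under \textsf{CH}, for each fixed $\beta < \omega_1$ there are at most $2^{\aleph_0} = \aleph_1$ many such functions, and since there are $\aleph_1$ many choices of top level $\beta$, the total number of $\mathcal{S}$-splitting subtree functions is at most $\aleph_1 \cdot \aleph_1 = \aleph_1$.

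Next, suppose toward a contradiction that $\{(F_\alpha, \Gamma_\alpha) : \alpha < \omega_2\}$ is an antichain in $\p$. By the bound established above and the pigeonhole principle, there exist distinct $\alpha < \beta < \omega_2$ with $F_\alpha = F_\beta$. But then by Lemma 6.8, the pair $(F_\alpha, \Gamma_\alpha \cup \Gamma_\beta)$ is a condition in $\p$ extending both $(F_\alpha, \Gamma_\alpha)$ and $(F_\beta, \Gamma_\beta)$, contradicting that they lie in an antichain.

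There is no real obstacle here: all the work has been done in showing that conditions sharing a working part are automatically compatible (Lemma 6.8), and \textsf{CH} is used only as the cardinal arithmetic fact $2^{\aleph_0} = \aleph_1$ to bound the number of working parts. The generalized promises $\Gamma$, despite each being a function defined on an uncountable domain, never need to be counted, since they can always be amalgamated by union whenever the $F$-parts agree.
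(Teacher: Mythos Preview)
Your proof is correct and follows exactly the approach of the paper, which simply notes that under \textsf{CH} there are only $\omega_1$-many $\mathcal{S}$-splitting subtree functions and leaves the rest implicit. One minor correction: the compatibility result you invoke is Lemma~6.7, not Lemma~6.8 (the latter is the corollary you are proving).
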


\begin{proof}
	If \textsf{CH} holds, then there are only $\omega_1$-many 
	$\mathcal S$-splitting subtree functions.
\end{proof}

\section{Not Adding Small-Splitting Subtrees}

Recall that $T$ is a fixed normal Aronszajn tree, $\mathcal S \subseteq \omega \setminus 2$, 
for all $x \in T$, $|\Imm_T(x)| \ge \sup(\mathcal S)$, and $\p = \p(T,\mathcal S)$. 
In Section 11 we prove that for any $2 < n < \omega$, it is consistent that 
every $(\ge \! n)$-splitting normal Aronszajn tree contains an uncountable downwards closed 
$n$-splitting normal subtree, but there exists an infinitely splitting normal Aronszajn tree 
which contains no uncountable downwards closed $(< \! n)$-splitting subtree. 
For this purpose, we need the following theorem.

\begin{thm}
	Let $1 < n \le s$. 
	Suppose that $S$ is an $\omega_1$-tree which does not 
	contain an uncountable downwards closed $(< \! n)$-splitting subtree. 
	Then $\p$ forces that $S$ does not contain an uncountable 
	downwards closed $(< \! n)$-splitting subtree. 
\end{thm}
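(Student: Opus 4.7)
The plan is to argue by contradiction. Suppose some $p_0 \in \p$ forces that $\dot U$ is an uncountable downwards closed $(<n)$-splitting subtree of $S$; I will produce $q \le p_0$ which forces $\dot U$ to be countable, contradicting $p_0$.

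The key observation, which I would record as a lemma, is that for any condition $u = (F, \Gamma) \le p_0$, the set
\[
W_u = \{ y \in S : u \Vdash y \in \dot U \}
\]
computed in $V$ is downwards closed in $S$ (since $\dot U$ is forced downwards closed) and $(<n)$-splitting: if some $y \in W_u$ had $n$ many immediate successors in $W_u$, then $u$ would force all of them into $\dot U$, contradicting the $(<n)$-splittingness of $\dot U$. By the hypothesis on $S$, $W_u$ is countable in $V$. Whenever $u$ lies in a countable elementary submodel $N \prec H(\lambda)$ containing $S, \dot U, \p$, we have $W_u \in N$, and by elementarity we can pick an ordinal $\gamma_u \in N \cap \omega_1$ with $W_u \subseteq S \res \gamma_u$.

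Fix such an $N$ containing also $p_0, T, n$, and set $\delta = N \cap \omega_1$. Enumerate $S_\delta = \{ y_m : m < \omega \}$, and enumerate the dense open subsets of $\p$ that lie in $N$ as $\langle D_m : m < \omega \rangle$. I would then adapt the total master condition construction of Theorem 6.5 to build, by induction, a decreasing sequence $(F_0, \Gamma_0) = p_0,\ (F_1, \Gamma_1),\ (F_2, \Gamma_2), \ldots$ of conditions in $N$ such that $(F_{m+1}, \Gamma_{m+1}) \in D_m$ and $(F_{m+1}, \Gamma_{m+1}) \Vdash y_m \res \gamma_m \notin \dot U$, where $\gamma_m \in N$ witnesses $W_{(F_m, \Gamma_m)} \subseteq S \res \gamma_m$. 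At each step this extension exists in $N$: the element $y_m \res \gamma_m$ lies in $S_{\gamma_m} \subseteq N$ and is not in $W_{(F_m, \Gamma_m)}$, so $(F_m, \Gamma_m)$ does not force $y_m \res \gamma_m \in \dot U$, so some extension forces $y_m \res \gamma_m \notin \dot U$, and by density of $D_m$ and elementarity there is such an extension in $D_m \cap N$.

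The condition $q \le p_0$ produced by Theorem 6.5 is then a total master condition for $N$ lying below every $(F_m, \Gamma_m)$, so $q \Vdash y_m \res \gamma_m \notin \dot U$ for all $m$. By downward closure of $\dot U$, $q \Vdash y_m \notin \dot U$ for every $m$, hence $q \Vdash \dot U \cap S_\delta = \emptyset$, and applying downward closure once more, $q \Vdash \dot U \subseteq S \res \delta$. This bounds $\dot U$ within countably many levels of $S$, contradicting $q \le p_0 \Vdash \dot U$ is uncountable. The main technical point (not so much an obstacle as a verification) is that the extra clause ``$(F_{m+1}, \Gamma_{m+1}) \Vdash y_m \res \gamma_m \notin \dot U$'' can be interleaved with the dense-set, element, and promise bookkeeping of Theorem 6.5 without disturbing them; this reduces to the observation that the set $\{ v \in D_m : v \le (F_m, \Gamma_m),\ v \Vdash y_m \res \gamma_m \notin \dot U \}$ is non-empty in $N$ and hence may be inserted into the enumeration driving the Theorem 6.5 construction.
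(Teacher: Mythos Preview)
There is a genuine gap in the step you flag as ``not so much an obstacle as a verification.'' The Theorem~6.5 construction does not merely enumerate dense open sets: at every stage it carries along a finite side set $Z_m \subseteq T_{N\cap\omega_1}$ and requires the next condition $(F_{m+1},\Gamma_{m+1})$ to satisfy $F_{m+1}(x \res \gamma_{m+1}) = 1$ for all $x \in Z_m$. This side constraint is what makes the descending sequence have a lower bound; without it you have no total master condition. Your argument only shows that the set $\{v \le (F_m,\Gamma_m) : v \Vdash y_m \res \gamma_m \notin \dot U\}$ is \emph{non-empty}, but to feed it into the construction via Proposition~6.4 you would need it to be \emph{dense open} below $(F_m,\Gamma_m)$, which it is not: some extensions of $(F_m,\Gamma_m)$ may well force $y_m \res \gamma_m \in \dot U$. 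So the real task is to find an extension that simultaneously forces $c \notin \dot U$ \emph{and} satisfies $G(x \res \gamma) = 1$ for every $x$ in the current side set.

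This is precisely the content of the paper's Lemma~7.4, and its proof is not a triviality. One assumes for contradiction that every extension compatible with the side set $Y$ fails to force $c \notin \dot W$, builds an uncountable set $B \subseteq S$ of elements admitting such ``bad'' witnessing tuples, and then uses the hypothesis on $S$ to find $d \in \bar B$ with $n$ immediate successors $d_0,\ldots,d_{n-1}$, each below some $e_i \in B$ with witness tuple $\vec y^i$. The assumption $n \le s$ is used exactly here: the combined set $\{y^i_j : i<n,\ j<l\}$ has $(\le s)$-to-one drop-downs to $\beta$, so Lemma~7.3 produces a single condition with top level $\delta$ that (a) sets $G(y^i_j)=1$ for all $i,j$ and (b) decides $\dot W \cap (S \res \delta)$; since $\dot W$ is $(<n)$-splitting this condition must force some $d_i \notin \dot W$, contradicting that $\vec y^i$ witnesses $e_i \in B$. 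Your $W_u$ observation, while correct, never invokes $n \le s$, which is a signal that the argument as written cannot close.
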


Applying the above theorem for $n = 2$, we get the following corollary.

\begin{corollary}
	The forcing poset $\p$ preserves Aronszajn trees. 
	In particular, $\p$ forces that $T$ is Aronszajn.
\end{corollary}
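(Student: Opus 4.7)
The plan is to derive the corollary as a direct application of Theorem 7.1 with $n = 2$, after establishing the equivalence (for a Hausdorff $\omega_1$-tree $S$): $S$ is Aronszajn if and only if $S$ contains no uncountable downwards closed $(< \! 2)$-splitting subtree. The ``only if'' direction is trivial since any cofinal branch of $S$ is itself a $1$-splitting uncountable downwards closed subtree. For ``if,'' I would show by transfinite induction that any uncountable downwards closed $(< \! 2)$-splitting $U \subseteq S$ satisfies $|U \cap T_\alpha| \le 1$ at every level $\alpha < \omega_1$. The successor step is immediate: $(< \! 2)$-splitting forces $|U \cap T_{\alpha+1}| \le |U \cap T_\alpha|$. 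At a limit $\lambda$, the inductive hypothesis says $U \res \lambda$ contains at most one $\lambda$-chain, and Hausdorffness of $S$ bounds the number of upper bounds of that chain in $T_\lambda$ by one. Since $U$ is uncountable and downwards closed in $S$ (which has a root), $U$ must then meet every level below $\omega_1$ in exactly one element, so $U$ is a cofinal branch of $S$.

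With this equivalence in hand, the corollary is bookkeeping. Let $S$ be a normal Aronszajn tree in $V$. Then in $V$, $S$ contains no uncountable downwards closed $(< \! 2)$-splitting subtree; Theorem 7.1 with $n = 2$, whose hypothesis $n \le s$ holds because $s = \min(\mathcal S) \ge 2$ by the standing assumption $\mathcal S \subseteq \omega \setminus 2$, shows that $\p$ forces $S$ to have no such subtree in $V^\p$. Since $\p$ is totally proper by Corollary 6.7, $\omega_1$ and the countability of levels are preserved, so $S$ is still a Hausdorff $\omega_1$-tree in $V^\p$. Applying the equivalence inside $V^\p$ yields that $S$ has no cofinal branch there, so $S$ remains Aronszajn. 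Specializing to $S = T$, which is normal Aronszajn by the standing assumption, gives the ``in particular'' clause.

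The only step requiring genuine argument is the equivalence between being Aronszajn and not containing an uncountable downwards closed $(< \! 2)$-splitting subtree; its only nontrivial ingredient is the cascade $|U \cap T_\alpha| \le 1$, which follows from combining the $(< \! 2)$-splitting hypothesis at successor levels with Hausdorffness at limit levels. Once that is in place, Theorem 7.1 and total properness of $\p$ do all the remaining work.
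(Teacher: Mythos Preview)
Your approach matches the paper's, which simply applies Theorem 7.1 with $n=2$; you have correctly supplied the bridge the paper leaves implicit, namely that for a Hausdorff $\omega_1$-tree, having no cofinal branch is equivalent to having no uncountable downwards closed $(<\!2)$-splitting subtree. One cosmetic slip: your ``if'' and ``only if'' labels are reversed --- that a cofinal branch is itself such a subtree proves the \emph{if} direction (no such subtree $\Rightarrow$ Aronszajn), while your inductive argument that the subtree has at most one point per level proves the \emph{only if} direction.
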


The next lemma is an easy consequence of the existence of total master conditions.

\begin{lemma}
	Let $S$ be an $\omega_1$-tree. 
	Let $(F,\Gamma) \in \p$ have top level $\beta$ and suppose that 
	$(F,\Gamma)$ forces that $\dot W$ is a subset of $S$. 
	Then there exists a club $D \subseteq \omega_1$ consisting of limit ordinals 
	greater than $\beta$ such that for all $\delta \in D$, for any 
	finite set $X \subseteq T_\delta$ with $\les$-to-one drop-downs to $\beta$ 
	and satisfying that for all $x \in X$, $F(x \res \beta) = 1$, there exists 
	some $(G,\Sigma) \le (F,\Gamma)$ with top level $\delta$ such that for all 
	$x \in X$, $G(x) = 1$, and $(G,\Sigma)$ decides $\dot W \cap (S \res \delta)$.
\end{lemma}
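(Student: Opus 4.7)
The plan is to obtain the club $D$ via elementary substructures and then apply Theorem 6.5 (Existence of Total Master Conditions) to produce the required extensions. Fix a large enough regular cardinal $\lambda$ and let
\[
D = \{ \delta < \omega_1 : \exists N \prec H(\lambda) \text{ countable with } N \cap \omega_1 = \delta \text{ and } \{T, S, \p, (F,\Gamma), \dot W\} \subseteq N \}.
\]
This is a club in $\omega_1$ by standard arguments, every element of $D$ is a countable limit ordinal, and every $\delta \in D$ satisfies $\delta > \beta$ (because $(F,\Gamma) \in N$ pins down $\beta$, so $\beta \in N$, so $\beta < \delta$).

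Now fix $\delta \in D$ and a finite set $X \subseteq T_\delta$ as in the hypothesis. Choose a countable $N \prec H(\lambda)$ with $N \cap \omega_1 = \delta$ containing the parameters listed above. Apply Theorem 6.5 to this $N$ with the given $(F,\Gamma)$ and $X$, taking $h$ to be the identity function on the collection of generalized promises in $N$ (so the fourth hypothesis of that theorem is trivially met). This yields a condition $(G,\Sigma) \le (F,\Gamma)$ with top level $\delta = N \cap \omega_1$ such that $G(x) = 1$ for all $x \in X$ and $(G,\Sigma)$ is a total master condition for $N$.

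It remains to verify that $(G,\Sigma)$ decides $\dot W \cap (S \res \delta)$. The key observation is that $S \res \delta \subseteq N$: for each $\alpha \in N \cap \omega_1$, the level $S_\alpha$ lies in $N$ and is countable, hence $S_\alpha \subseteq N$; taking the union over $\alpha < \delta$ gives $S \res \delta \subseteq N$. Now for each $s \in S \res \delta$, since $s \in N$ and $\dot W, \p \in N$, the set
\[
D_s = \{ q \in \p : q \text{ decides the statement } s \in \dot W \}
\]
is a dense open subset of $\p$ lying in $N$. Because $(G,\Sigma)$ is a total master condition for $N$, there exists $v_s \in D_s \cap N$ with $(G,\Sigma) \le v_s$, so $(G,\Sigma)$ inherits the decision made by $v_s$. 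Since $S \res \delta$ is countable, letting $A = \{ s \in S \res \delta : (G,\Sigma) \Vdash s \in \dot W \}$ gives $(G,\Sigma) \Vdash \dot W \cap (S \res \delta) = A$, as required.

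The only real work is pointing to Theorem 6.5 with the right inputs and checking that $S \res \delta \subseteq N$ so that the countably many dense sets $D_s$ actually lie in $N$; beyond that, the argument is a direct unpacking of what it means to be a total master condition.
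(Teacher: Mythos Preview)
Your proof is correct and follows essentially the same approach as the paper's: obtain a club of ordinals $\delta = N \cap \omega_1$ for suitable countable $N \prec H(\lambda)$, apply Theorem 6.5 to get a total master condition with top level $\delta$ and $G(x)=1$ for $x \in X$, and observe that $S \res \delta \subseteq N$ so the master condition decides $\dot W \cap (S \res \delta)$. The only cosmetic difference is that the paper fixes a continuous $\in$-chain $\langle N_\alpha : \alpha < \omega_1 \rangle$ and takes $D = \{\delta : N_\delta \cap \omega_1 = \delta\}$, which makes the club property immediate, whereas you define $D$ existentially and invoke the standard fact.
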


\begin{proof}
	Fix some large enough regular cardinal $\lambda$. 
	Let $\langle N_\alpha : \alpha < \omega_1 \rangle$ be an increasing and 
	continuous sequence of countable elementary substructures of $H(\lambda)$
	such that $T$, $\p$, $(F,\Gamma)$, $S$, and $\dot W$ are members of $N_0$ 
	and $N_\alpha \in N_{\alpha+1}$ for all $\alpha < \omega_1$. 
	Let $D = \{ \delta < \omega_1 : N_\delta \cap \omega_1 = \delta \}$.
	Then $D$ is a club set of limit ordinals greater than $\beta$.

	Consider any $\delta \in D$. 
	Let $X \subseteq T_\delta$ be finite with $\les$-to-one drop-downs to $\beta$ 
	and satisfying that for all $x \in X$, $F(x \res \beta) = 1$. 
	By Theorem 6.5 (Existence of Total Master Conditions), we can find 
	a total master condition $(G,\Sigma) \le (F,\Gamma)$ for $N_\delta$ 
	with top level $N_\delta \cap \omega_1 = \delta$ 
	such that for all $x \in X$, $G(x) = 1$. 
	Since $S \res \delta \subseteq N_\delta$ and 
	$(G,\Sigma)$ is a total master condition for $N_\delta$, 
	$(G,\Sigma)$ decides 
	$\dot W \cap (S \res \delta)$.
\end{proof}

\begin{lemma}
	Let $1 < n \le s$. 
	Assume that $S$ is an $\omega_1$-tree which does not 
	contain an uncountable downwards closed $(< \! n)$-splitting subtree. 
	Let $(F,\Gamma) \in \p$ have top level $\beta$ and suppose that 
	$(F,\Gamma)$ forces that $\dot W$ is a downwards closed 
	$(< \! n)$-splitting subtree of $S$.

	Let $\lambda$ be a large enough regular cardinal. 
	Assume:
	\begin{itemize}
	\item $N \prec H(\lambda)$ is countable, and $T$, $\p$, $(F,\Gamma)$, 
	$S$, and $\dot W$ are in $N$;
	\item $c \in S_{N \cap \omega_1}$;
	\item $Y$ is a finite subset of $T_{N \cap \omega_1}$ 
	with $\les$-to-one drop-downs to $\beta$ such that for all 
	$y \in Y$, $F(y \res \beta) = 1$.
	\end{itemize}
	Then there exists a condition $(G,\Sigma) \le (F,\Gamma)$ in $N$ with 
	some top level $\gamma$ such that for all $y \in Y$, $G(y \res \gamma) = 1$ 
	and $(G,\Sigma) \Vdash c \notin \dot W$.
\end{lemma}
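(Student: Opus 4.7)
Plan. I first normalize by applying Proposition 6.2 inside $N$ so that, without loss of generality, $Y$ has unique drop-downs to the top level $\beta$ of $(F,\Gamma)$. Enumerate $Y=\{a_0,\ldots,a_{m-1}\}$ and set $\vec x=\vec a\res\beta$. Assume toward a contradiction that no $(G,\Sigma)\le(F,\Gamma)$ in $N$ with $G(y\res\gamma)=1$ for all $y\in Y$ forces $c\notin\dot W$; by downward closure of $\dot W$, this is equivalent to saying that for every $d\le_S c$ with $d\in N$, no such $(G,\Sigma)$ forces $d\notin\dot W$. Fix $\alpha\in N\cap\omega_1$ with $\alpha>\beta$ and set $d:=c\res\alpha\in N$.

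Inside $N$ I define the subtree $W^{\ast}\subseteq T_{\vec x}$ consisting of tuples $\vec b$ of height at least $\alpha$ such that every $(G,\Sigma)\le(F,\Gamma)$ with top level $\gamma\in[\alpha,\h_T(\vec b)]$ and $G(b_i\res\gamma)=1$ for all $i<m$ satisfies $(G,\Sigma)\not\Vdash d\notin\dot W$. Applying elementarity at each $\gamma\in N\cap\omega_1$ to the reformulated contradictory assumption, every $\vec a\res\zeta$ with $\zeta\in N\cap[\alpha,\omega_1)$ belongs to $W^{\ast}$; the standard countability-cutoff argument (if $W^{\ast}\subseteq T\res\zeta^{\ast}$ for some $\zeta^{\ast}\in N$ then $\vec a\res(\zeta^{\ast}+1)\in W^{\ast}$ yields a contradiction) shows $W^{\ast}$ is uncountable, and the same reasoning gives uncountability of $(W^{\ast})_{\vec a\res\alpha}$. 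By Lemma 4.2 and elementarity I extract a promise $U\in N$ contained in $(W^{\ast})_{\vec a\res\alpha}$ with root $\vec a\res\alpha$; a small refinement of Lemma 1.1(a) lets me arrange that the chain $\{\vec a\res\zeta:\zeta\in N\cap[\alpha,\omega_1)\}$ lies inside $U$.

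Lemma 5.6 then produces a countable limit ordinal $\delta\in N$ above $\alpha$ together with a generalized promise $\mathcal U\in N$, $\mathcal U(\delta)=\{U\cap T^{m}_{\delta}\}$, suitable for $F$; by selecting $\vec a\res\delta$ as one of the distinguished pairwise-disjoint tuples used in the construction inside Lemma 4.6, Proposition 6.3 yields $(G_0,\Gamma\cup\{\mathcal U\})\le(F,\Gamma)$ in $N$ with top level $\delta$ and $G_0(a_i\res\delta)=1$ for every $i$. Applying Proposition 6.4 to $(G_0,\Gamma\cup\{\mathcal U\})$ with $X=Y$ and the dense open set $D=\{(H,\Lambda)\le(F,\Gamma):(H,\Lambda)\text{ decides whether }d\in\dot W\}\in N$, I obtain $(H,\Lambda)\le(G_0,\Gamma\cup\{\mathcal U\})$ in $N\cap D$ with $H(a_i\res\mathrm{top}(H))=1$ for every $i$. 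Since $\mathcal U\in\Lambda$, the function $H$ fulfills $\mathcal U$, so there is $\vec b\in U\cap T^{m}_{\mathrm{top}(H)}$ disjoint from $\vec a\res\mathrm{top}(H)$ with $H(b_i)=1$ for all $i$; because $\vec b\in U\subseteq W^{\ast}$, the definition of $W^{\ast}$ applied to $(H,\Lambda)$ gives $(H,\Lambda)\not\Vdash d\notin\dot W$, and combined with $(H,\Lambda)\in D$ this yields $(H,\Lambda)\Vdash d\in\dot W$.

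The contradiction is then extracted by applying the whole machinery to an enriched version of $W^{\ast}$ which attaches to each $\vec b$ an element $s(\vec b)\in\Imm_S(d)$ that is forced into $\dot W$ by every $D$-extension marking $\vec b$; fulfillment of $\mathcal U$ then supplies enough pairwise-disjoint tuples in $U$ above $\vec a\res\alpha$, with distinct associated $s(\vec b)$, to force $n$-many immediate successors of $d$ into $\dot W$, contradicting that $\dot W$ is forced $(<\!n)$-splitting. The principal obstacle is carrying out this enrichment rigorously, organizing the $W^{\ast}$-definition so that the many tuples of $U$ project down in a controlled way to distinct children of $d$ in $S$; here the hypothesis $n\le s$ is essential, providing enough branching in $T$ at each level to separate the $n$ contributions needed to violate the $(<\!n)$-splitting of $\dot W$.
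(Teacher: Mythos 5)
Your proposal follows the template of Proposition 6.4 and Lemma 8.2 (build an uncountable set in the derived tree $T_{\vec x}$, extract a promise, add it to the condition, and contradict fulfillment), but this is not how the paper proves this lemma, and the point where your argument actually has to close --- the final ``enrichment'' paragraph --- is exactly the step that is missing and that the promise machinery cannot supply. The paper's proof uses no promises at all. It defines the uncountable set $B$ inside $S$, not inside $T_{\vec x}$: $B$ consists of those $b >_S c \res \beta$ of height $\delta$ in a club $D$ (obtained from Lemma 7.3) admitting a witnessing tuple $\vec y \in T_\delta^l$ above $\vec x$ such that no condition with top level $\gamma \le \delta$ marking $\vec y \res \gamma$ forces $b \notin \dot W$. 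The downward closure $\bar B$ is an uncountable downwards closed subtree of $S$, so the hypothesis on $S$ yields a node $d$ with $n$ immediate successors $d_0,\dots,d_{n-1}$ in $\bar B$; choosing $e_i \in B$ above $d_i$ all of the same height $\delta \in D$ with witnesses $\vec y^i$, the set $X = \{y^i_j\}$ has $\les$-to-one drop-downs to $\beta$ (this is precisely where $n \le s$ is used), and Lemma 7.3 produces a single condition with top level $\delta$ marking all of $X$ and \emph{deciding} $\dot W \cap (S \res \delta)$. Since $\dot W$ is forced $(<\!n)$-splitting, that condition must force some $d_i \notin \dot W$, hence $e_i \notin \dot W$, contradicting that $\vec y^i$ witnesses $e_i \in B$.

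Your version has two concrete gaps. First, you fix a single $d = c \res \alpha$ and build $W^*$ relative to it; every tuple $\vec b \in U$ then certifies the same negative statement about the same $d$, and there is no mechanism by which distinct tuples of $U$ get attached to distinct immediate successors of $d$ in $S$. The branching you need comes from applying the non-$(<\!n)$-splitting hypothesis to an uncountable downwards closed subtree of $S$, which your construction never produces; the ``enriched version of $W^*$'' with a map $s(\vec b) \in \Imm_S(d)$ is asserted but not constructed, and it is the whole content of the proof. Second, fulfillment only yields that a condition $(H,\Lambda)$ does \emph{not} force $d \notin \dot W$ --- a failure to decide, not a positive forcing of membership. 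To turn ``not forced out'' into ``forced in'' for $n$ distinct children simultaneously, you need a single condition that decides $\dot W$ up to the relevant level while marking all $nl$ points of $T$ at once; that is what the club of Lemma 7.3 provides, and your proposal never invokes it. Your stated role for $n \le s$ (``enough branching in $T$'') is also not where it enters: it is needed to verify the $\les$-to-one drop-down condition for the set $\{y^i_j : i < n,\ j < l\}$ so that Lemma 7.3 applies.
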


\begin{proof}
	As in the first paragraph of the proof of Proposition 6.4, 
	without loss of generality we may assume that $Y$ has unique drop-downs to $\beta$. 
	Fix a club $D \subseteq \omega_1$ as described in Lemma 7.3. 
	By elementarity, we may assume that $D \in N$. 
	So $N \cap \omega_1$ is a limit point of $D$. 
	Suppose for a contradiction that for all $(G,\Sigma) \le (F,\Gamma)$ in $N$, 
	if $\gamma$ is the top level of $(G,\Sigma)$ and for all $y \in Y$,  
	$G(y \res \gamma) = 1$, then it is not the case that 
	$(G,\Sigma) \Vdash c \notin \dot W$. 
	Let $\vec a = (a_0,\ldots,a_{l-1})$ be an injective tuple which enumerates $Y$. 
	Let $x_i = a_i \res \beta$ for all $i < l$.

	Define $B$ as the set of all $b \in S$ satisfying that 
	$c \res \beta <_S b$, the height $\delta$ of $b$ is in $D$, and 
	there exists a tuple 
	$\vec y = (y_0,\ldots,y_{l-1}) \in T_{\delta}^l$ 
	such that $x_i <_T y_i$ for all $i < l$, and 
	whenever $(G,\Sigma) \le (F,\Gamma)$ 
	has some top level $\gamma \le \delta$ 
	and for all $i < l$, $G(y_i \res \gamma) = 1$, then 
	it is not the case that $(G,\Sigma) \Vdash b \notin \dot W$.
	
	Note the following facts:
	\begin{enumerate}
	\item $B \in N$ by elementarity;
	\item since $\dot W$ is forced to be downwards closed, 
	for all $\zeta \in D$ with $\beta < \zeta < N \cap \omega_1$, 
	$c \res \zeta \in B$ as witnessed by $\vec a \res \zeta$;
	\item similarly, if $b \in B$ as witnessed by $\vec y$, 
	and $\zeta \in D$ is such that $\beta < \zeta \le \h_T(b)$, 
	then $b \res \zeta \in B$ as witnessed by $\vec y \res \zeta$.
	\end{enumerate}
	Since $N \cap \omega_1$ is a limit point of $D$, 
	(1) and (2) imply that $B$ is uncountable.

	Let $\bar B$ be the set of $y \in S$ for which there exists some $b \in B$ 
	with $y <_{S} b$. 
	Then $\bar B$ is an uncountable downwards closed subtree of $S$. 
	Since $S$ does not contain an uncountable downwards closed $(< \! n)$-splitting subtree, 
	$\bar B$ is not $(< \! n)$-splitting. 
	So we can fix some $d \in \bar B$ which has at least $n$-many 
	immediate successors in $\bar B$. 
	Fix distinct immediate successors 
	$d_0,\ldots,d_{n-1}$ of $d$ in $\bar B$. 
	For each $i < n$, choose some $e_i \in B$ with $d_i <_{S} e_i$. 
	By property (3) in the previous paragraph and dropping down to the lowest height 
	of the elements $e_0,\ldots,e_{n-1}$, 
	we may assume without loss of generality 
	that all of the $e_i$'s have the same height $\delta$. 
	For each $i < n$, 
	pick a tuple $\vec y^i = (y_0^i,\ldots,y_{l-1}^i) \in T_{\delta}^l$ 
	which witnesses that $e_i \in B$.

	Now for all $i < n$ and $j < l$, 
	$y_j^i \res \beta = x_j$, and in particular, $F(y_j^i \res \beta) = 1$. 
	Define $X = \{ y_j^i : i < n, \ j < l \}$. 
	Then every member of $T_\beta$ either has nothing above it in $X$, or is equal 
	to $x_j$ for some $j < l$ and has exactly the elements $y_j^0,\ldots,y_j^{n-1}$ 
	above it in $X$. 
	Since $n \le s$, it follows that 
	$X$ has $\les$-to-one drop-downs to $\beta$. 
	As $\delta \in D$, we can fix some $(G,\Sigma) \le (F,\Gamma)$ with 
	top level $\delta$ such that for all $i < n$ and $j < l$, $G(y_j^i) = 1$ 
	and $(G,\Sigma)$ decides $\dot W \cap (S \res \delta)$. 
	Then for each $i < n$, since $\vec y^i$ witnesses that $e_i \in B$, 
	it is not the case that $(G,\Sigma) \Vdash e_i \notin \dot W$. 
	But $(G,\Sigma)$ forces that $\dot W$ is $(< \! n)$-splitting and $(G,\Sigma)$  
	decides $\dot W \cap S_{\h_{S}(d)+1}$. 
	So there must exist some $i < n$ 
	such that $(G,\Sigma) \Vdash d_i \notin \dot W$. 
	As $\dot W$ is forced to be downwards closed and $d_i <_{S} e_i$, 
	$(G,\Sigma) \Vdash e_i \notin \dot W$, which is a contradiction.
\end{proof}

\begin{proof}[Proof of Theorem 7.1]
	Let $1 < n \le s$ and suppose that $S$ is an $\omega_1$-tree which does not 
	contain an uncountable downwards closed $(< \! n)$-splitting subtree. 
	Consider a condition $(F,\Gamma) \in \p$ with some top level $\beta$ and assume 
	that $(F,\Gamma)$ forces that $\dot W$ is a downwards closed 
	$(< \! n)$-splitting subtree of $S$. 
	We find an extension of $(F,\Gamma)$ which forces that $\dot W$ is countable.

	Fix a large enough regular cardinal $\lambda$ and a countable $N \prec H(\lambda)$ 
	such that $T$, $\p$, $(F,\Gamma)$, $S$, and $\dot W$ are in $N$. 
	Now proceed as in the proof of Theorem 6.5 (Existence of Total Master Conditions) 
	to build a descending sequence $\langle (F_n,\Gamma_n) : n < \omega \rangle$ of 
	conditions in $N$ below $(F,\Gamma)$ and a lower bound 
	$(G,\Sigma)$ which is a total master condition for $N$, with the following adjustment. 
	Modify the bookkeeping so that for every $c \in S_{N \cap \omega_1}$, 
	there exists some $n < \omega$ such that $(F_{n+1},\Gamma_{n+1})$ forces that 
	$c \notin \dot W$. 
	This is possible by Lemma 7.4. 
	As $W$ is forced to be downwards closed, 
	clearly $(G,\Sigma)$ forces that 
	$\dot W$ is a subset of $S \res (N \cap \omega_1)$, and hence is countable.
\end{proof}

\section{Preserving Suslin Trees}

In some of our consistency results of Sections 9 and 11, we will need to preserve 
certain Suslin trees after forcing with $\p$. 
This is possible by the following theorem. 

\begin{thm}
	Suppose that $S$ is a normal Suslin tree such that $\Vdash_S T \ \text{is Aronszajn}$. 
	Then $\p$ forces that $S$ is Suslin.
\end{thm}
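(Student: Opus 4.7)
The plan is to follow the template of the proof of Theorem~7.1, with the ``problem set'' now defined in a product $S \otimes T^l$ rather than in $T^l$ alone, using generalized promises to handle this product structure. Assume for contradiction that some $(F,\Gamma) \in \p$ forces $\dot A$ to be an uncountable maximal antichain in $S$, and fix a countable $N \prec H(\lambda)$ containing all relevant objects. I would build a total master condition $(G,\Sigma) \le (F,\Gamma)$ for $N$ forcing $\dot A \subseteq S \res (N \cap \omega_1)$; since the latter set is countable, this contradicts the uncountability of $\dot A$. By maximality of $\dot A$, it suffices to force that every $c \in S_{N \cap \omega_1}$ has a strict $S$-ancestor in $\dot A$.

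The main technical step, an analog of Lemma~7.4 for Suslin trees, is the following: for each $c \in S_{N \cap \omega_1}$, each $(F',\Gamma') \le (F,\Gamma)$ in $N$ with top level $\beta$, and each finite $Y \subseteq T_{N \cap \omega_1}$ with $\les$-to-one drop-downs to $\beta$ on which $F'$ is valid, there exists $(G',\Sigma') \le (F',\Gamma')$ in $N$ with some top level $\gamma$ such that $G'(y \res \gamma) = 1$ for every $y \in Y$ and $(G',\Sigma')$ forces that some strict $S$-ancestor of $c$ is in $\dot A$. Granting this, a bookkeeping argument parallel to the proof of Theorem~7.1, now handling the countably many $c \in S_{N \cap \omega_1}$, yields the desired total master condition.

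To prove the main step, suppose no such $(G',\Sigma')$ exists. Let $\vec a$ enumerate $Y$ with $\vec x = \vec a \res \beta$, and define $W$ as the set of pairs $(b,\vec y) \in S \times T^l$ with $c \res \beta <_S b$, $\vec x <_T \vec y$, $\h_S(b) = \h(\vec y) < N \cap \omega_1$, such that no $(G,\Sigma) \le (F',\Gamma')$ with top level $\h_S(b)$ and $G(\vec y) = 1$ forces the existence of some $s \le_S b$ with $s \in \dot A$. The failure hypothesis combined with elementarity and Proposition~6.2 (Extension) makes $W$ uncountable and downward closed in $S_{c \res \beta} \otimes T_{\vec x}$; note that $S \otimes T^l$ is Aronszajn since $S$ is Suslin and $T$ is Aronszajn. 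Apply Lemma~1.1(a) to thin $W$ to $W'$ in which every element has uncountably many $W'$-extensions, so that the projection to $S$ is uncountable downward closed in $S$. Lemma~1.1(d) applied to the Suslin tree $S$ then gives that this projection contains $S_{b^*}$ for some $b^* \in S$. The collection of fibers $W'_b = \{\vec y : (b,\vec y) \in W'\}$ for $b \in S_\delta$ with $b \ge_S b^*$ is to be assembled, after suitable thickening, into a generalized promise $\mathcal U$ on $T$ suitable for $F'$. Adding $\mathcal U$ via Proposition~6.3 and extending in $N$ to a condition $(H,\Lambda)$ that decides $\dot A \cap (S \res \h(H))$, fulfillment of $\mathcal U$ yields some $b$ and $\vec y \in W'_b$ with $H(\vec y) = 1$; then $(H,\Lambda)$ either directly forces some $s \le_S b$ into $\dot A$ (contradicting $(b,\vec y) \in W'$) or, by maximality of $\dot A$, is forced to be below a $\dot A$-element strictly above $b$. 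Carrying this analysis out uniformly over $b \in S_{\h(H)}$ above $b^*$ and invoking the Suslin property of $S$ in $V$ produces the contradiction.

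The main obstacle is the construction of $\mathcal U$: the raw fibers $W'_b$ may be finite in $V$, yet Definition~4.7 requires each element of $\mathcal U(\delta)$ to be a countably infinite, well-distributed subset of $T_\delta^l$. The key is that, viewed through the generic branch of $S$, the family $\{W'_b\}$ assembles in $V^S$ into an uncountable downward closed subset of $T^l$ to which Lemma~4.6 applies, using that $T$ remains Aronszajn in $V^S$; this allows extraction of the required disjoint sequences of tuples, which pulled back to $V$ supply the well-distribution in each fiber. This is precisely the motivation for the generalized promise framework rather than ordinary promises, as signaled by the remark following the definition that the framework parallels Abraham--Shelah's approach for specializing while preserving Suslin trees.
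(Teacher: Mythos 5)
Your overall architecture is the one the paper actually uses for Theorem 8.1: a fiber-wise ``problem set'' sitting over $S$ with fibers in $T^{l}$, the Suslin property of $S$ in $V$ to find a cone $S_{b^*}$ (resp.\ $S_d$) inside the projection, then forcing with $S$ so that the fibers along the generic branch assemble into an uncountable downwards closed subset of $T_{\vec x}$, using the hypothesis that $T$ remains Aronszajn in $V^S$ to apply Lemma 5.6 there, and finally pulling the resulting data back to $V$ via the countable distributivity and c.c.c.\ of $S$ to obtain a generalized promise (this pull-back is exactly why each level of a generalized promise is allowed to be a countable \emph{family} of sets rather than a single set). The paper phrases the problem set as a subset $B$ of $S$ together with witness sets $\mathcal Y_e \subseteq T_{\vec x}\cap T_{\delta_e}^l$ rather than as a subset of $S\otimes T^l$, and works with dense open subsets of $S$ rather than maximal antichains, but these are cosmetic differences.

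There is, however, a genuine gap in your endgame, caused by an order-of-quantifiers problem. Fulfillment of the generalized promise lets you \emph{choose} a set $\mathcal Y\in\mathcal U(\gamma)$ and then hands you a tuple $\vec y\in\mathcal Y$ with $H(\vec y)=1$; each such $\mathcal Y$ is tied (via the condition of $S$ that decided it) to a particular fiber $W'_b$, so the $b$ is determined by your choice of $\mathcal Y$, not by $(H,\Lambda)$. Your dichotomy then breaks down: there is no reason why $(H,\Lambda)$, which merely decides $\dot A\cap(S\res\h(H))$, should force any ancestor of \emph{that particular} $b$ into $\dot A$ --- the decided initial segment of $\dot A$ may simply contain nothing below $b$, and the second branch of your dichotomy (``forced to be below an $\dot A$-element strictly above $b$'') yields no contradiction with $(b,\vec y)\in W'$; re-invoking the Suslin property of $S$ at this point does not repair it. The paper closes the argument by reversing the order: it first extends to $(H,\Lambda)$ which forces a \emph{specific} $e\ge_S d'$ into the dense open set $\dot E$ (for you, $\dot E=\{s\in S:\exists a\in\dot A\ (a\le_S s)\}$, which is forced dense open by maximality), arranges via Proposition 6.2 that $\h_S(e)$ equals the top level of $H$, \emph{then} picks $f\ge_S e$ deciding the fiber at that level as some $\mathcal Y\in\mathcal V(\h_S(e))$, and only then applies fulfillment to that $\mathcal Y$; since $f$ forces $\mathcal Y\subseteq\mathcal Y_e$, the resulting $\vec y$ witnesses $e\in B$, contradicting $(H,\Lambda)\Vdash e\in\dot E$. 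With that reordering your argument goes through; as written it does not.
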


The main tool for proving Theorem 8.1 is the following lemma.

\begin{lemma}
	Suppose that $S$ is a normal Suslin tree such that $\Vdash_S T \ \text{is Aronszajn}$. 
	Let $(F,\Gamma) \in \p$ have top level $\beta$ and suppose that 
	$(F,\Gamma)$ forces that $\dot E$ is a dense open subset of $S$.
	
	Let $\lambda$ be a large enough regular cardinal. 
	Assume:
	\begin{itemize}
	\item $N \prec H(\lambda)$ is countable, and $T$, $\p$, $(F,\Gamma)$, 
	$S$, and $\dot E$ are in $N$;
	\item $c \in S_{N \cap \omega_1}$;
	\item $Y$ is a finite subset of $T_{N \cap \omega_1}$ 
	with $\les$-to-one drop-downs to $\beta$ such that for all 
	$y \in Y$, $F(y \res \beta) = 1$.
	\end{itemize}
	Then there exists a condition $(G,\Sigma) \le (F,\Gamma)$ in $N$ with 
	some top level $\gamma$ such that for all $y \in Y$, $G(y \res \gamma) = 1$ 
	and $(G,\Sigma) \Vdash c \in \dot E$.
\end{lemma}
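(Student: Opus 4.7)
The plan is to follow the structure of Lemma 7.4, but to exploit the Suslinness of $S$ (in place of the absence of small-splitting subtrees) and the openness of $\dot E$ (in place of the downward closure of $\dot W$). Openness turns out to enable a strikingly more direct argument that bypasses the multi-witness combinatorics used in Lemma 7.4.

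First I would reduce to the case where $Y$ has unique drop-downs to $\beta$ by extending $(F,\Gamma)$ through Proposition 6.2, as in the opening of Proposition 6.4. Let $\vec a = (a_0,\ldots,a_{l-1})$ enumerate $Y$ and set $x_i = a_i \res \beta$. Assume for contradiction that no $(G,\Sigma) \le (F,\Gamma)$ in $N$ with top level $\gamma$ and $G(a_i \res \gamma) = 1$ for all $i$ forces $c \in \dot E$. Define the bad set
\[
B = \{\, b \in S : \text{no } (G,\Sigma) \le (F,\Gamma) \text{ with top level } \gamma \text{ and } G(a_i \res \gamma) = 1 \text{ for all } i \text{ forces } b \in \dot E \,\}.
\]
By elementarity $B \in N$, and by openness of $\dot E$, $B$ is downward closed in $S$. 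I would then show $c \res \zeta \in B$ for every $\zeta \in N \cap \omega_1$ with $\zeta > \beta$: if some $(G,\Sigma) \in V$ with $a$-values forced $c \res \zeta \in \dot E$, then since $S_\zeta \in N$ is countable and hence $S_\zeta \subseteq N$ (so $c \res \zeta \in N$) and all other parameters are in $N$, by elementarity such $(G,\Sigma)$ could be found in $N$, and by openness would force $c \in \dot E$, contradicting our assumption. Since $B \in N$ contains $c \res \zeta$ at heights cofinal in $N \cap \omega_1$, elementarity forces $B$ to be unbounded in $\omega_1$, hence uncountable.

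By Suslinness of $S$ and Lemma 1.1(d), there is some $d \in S$ (in $N$ by elementarity) with $S_d \subseteq B$. Let
\[
D^* = \{\, (G,\Sigma) \le (F,\Gamma) : \exists b \ge_S d,\; (G,\Sigma) \Vdash b \in \dot E \,\};
\]
density of $\dot E$ at $d$ makes $D^*$ dense below $(F,\Gamma)$, while $D^*$ is clearly open and lies in $N$. Applying Proposition 6.4 (Consistent Extensions Into Dense Sets) with $D = D^*$ and $X = Y$ yields $(G,\Sigma) \le (F,\Gamma)$ in $N \cap D^*$ with some top level $\gamma$ and $G(a_i \res \gamma) = 1$ for all $i$. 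By elementarity, some witness $b \in N$ with $b \ge_S d$ has $(G,\Sigma) \Vdash b \in \dot E$. But $b \in S_d \subseteq B$, while $(G,\Sigma) \in V$ has $a$-values and forces $b \in \dot E$, directly contradicting the defining property of $B$.

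The main subtlety lies in the elementarity step placing $c \res \zeta$ in $B$: although $c$ itself need not belong to $N$, its initial segments $c \res \zeta$ for $\zeta \in N \cap \omega_1$ do, and the upward propagation $\Vdash c \res \zeta \in \dot E \Rightarrow \Vdash c \in \dot E$ furnished by openness lets elementarity transfer any counterexample back into $N$, which is what drives the contradiction.
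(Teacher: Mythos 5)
Your argument has a genuine gap, and it is located exactly where the real difficulty of this lemma lives. The set $B$ you define is parameterized by the tuple $\vec a$ enumerating $Y$, whose entries have height $N \cap \omega_1$ and hence do not belong to $N$; nor is the branch $\gamma \mapsto a_i \res \gamma$ ($\gamma < N \cap \omega_1$) an element of $N$. So the claim ``$B \in N$ by elementarity'' fails, the elementarity step that converts ``$c \res \zeta \in B$ for cofinally many $\zeta < N \cap \omega_1$'' into ``$B$ is uncountable'' collapses, and the node $d$ with $S_d \subseteq B$ cannot be taken in $N$ — so $D^*$ need not be in $N$ and Proposition 6.4 cannot be applied to it. The natural repair is to existentially quantify over witness tuples, i.e.\ to put $b \in B$ when \emph{some} $\vec y \in T_{\vec x} \cap T_{\h_S(b)}^{l}$ witnesses that no suitable condition with $G(y_i \res \gamma) = 1$ forces $b \in \dot E$; this is what the paper does, and it restores $B \in N$. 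But then your final contradiction evaporates: the condition $(G,\Sigma)$ you extract from $D^*$ satisfies $G(a_i \res \gamma) = 1$, whereas the membership of the chosen $b \ge_S d$ in $B$ is certified by some \emph{other} tuple $\vec y_b$, varying with $b$, and nothing in your construction makes $G$ take value $1$ on $\vec y_b$. A telling symptom is that your proof never uses the hypothesis $\Vdash_S T$ is Aronszajn, nor any of the promise machinery; if the lemma followed from total properness and Suslinness alone, essentially any such forcing would preserve Suslin trees, which is false.

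The paper's proof is built precisely to bridge this gap. After locating $d$ with $S_d \subseteq B$ (for the existentially quantified $B$), it forces with $S$ below $d$ to select a cofinal branch, takes the union of the witness sets $\mathcal Y_e$ for $e$ along that branch — which is an uncountable downwards closed subtree of $T_{\vec x}$ in $V[G_S]$ — extracts a promise from it, and uses the hypothesis that $T$ remains Aronszajn in $V[G_S]$ (via Lemma 5.6) to convert it into a suitable generalized promise; countable distributivity and the c.c.c.\ of $S$ then pull this back to a generalized promise $\mathcal V$ in $V$. Adding $\mathcal V$ to the condition via Proposition 6.3 forces every further extension to \emph{fulfill} $\mathcal V$, which is exactly what guarantees that the condition deciding some $e \ge_S d'$ into $\dot E$ also activates a genuine witness $\vec y \in \mathcal Y_e$, yielding the contradiction. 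Your reduction to unique drop-downs, the downward closure of $B$ via openness, and the density of $D^*$ are all fine, but without the generalized-promise step the argument does not close.
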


\begin{proof}
	As in the first paragraph of the proof of 
	Proposition 6.4 (Consistent Extensions Into Dense Sets), 
	without loss of generality we may assume that $Y$ has unique drop-downs to $\beta$. 
	Suppose for a contradiction that for all $(G,\Sigma) \le (F,\Gamma)$ in $N$, 
	if $\gamma$ is the top level of $(G,\Sigma)$ and for all $y \in Y$,  
	$G(y \res \gamma) = 1$, then it is not the case that 
	$(G,\Sigma) \Vdash c \in \dot E$. 
	Fix an injective tuple $(a_0,\ldots,a_{l-1})$ which enumerates $Y$. 
	Define $x_i = a_i \res \beta$ for all $i < l$ and 
	let $\vec x = (x_0,\ldots,x_{l-1})$.

	Define $B$ as the set of all $b \in S$ satisfying that 
	$c \res \beta <_{S} b$, and letting $\delta = \h_S(b)$, there exists some 
	$\vec y = (y_0,\ldots,y_{l-1}) \in T_{\vec x} \cap T_\delta^l$ 
	such that whenever $(G,\Sigma) \le (F,\Gamma)$ 
	has some top level $\gamma \le \delta$ 
	and for all $i < l$, $G(y_i \res \gamma) = 1$, 
	then it is not the case that $(G,\Sigma) \Vdash b \in \dot E$.
	
	Note the following facts:
	\begin{enumerate}
	\item $B \in N$ by elementarity;
	\item since $\dot E$ is forced to be open, 
	for all $\zeta$ with $\beta \le \zeta < N \cap \omega_1$, 
	$c \res \zeta \in B$ as witnessed by $\vec a \res \zeta$;
	\item if $b \in B$ as witnessed by $\vec y$ 
	and $\zeta$ is such that $\beta \le \zeta \le \h_T(b)$, 
	then $b \res \zeta \in B$ as witnessed by $\vec y \res \zeta$.
	\end{enumerate}
	These properties 
	imply that $B$ is uncountable and downwards closed in $S_{c \res \beta}$.

	For any $e \in B$, let $\delta_e = \h_S(e)$ and 
	define $\mathcal Y_e$ to be the set of all 
	$\vec y = (y_0,\ldots,y_{l-1}) \in T_{\vec x} \cap T_{\delta_e}^l$ 
	witnessing that $e \in B$, which means that 
	whenever $(G,\Sigma) \le (F,\Gamma)$ has some top level $\gamma \le \delta_e$ 
	and for all $i < l$, $G(y_i \res \gamma) = 1$, then 
	it is not the case that $(G,\Sigma) \Vdash e \in \dot E$. 
	Note that by (3) above, for all $e$ and $f$ in $B$ such that $e <_S f$, 
	$\mathcal Y_f \res \delta_e \subseteq \mathcal Y_e$.

	Since $S_{c \res \beta}$ is a normal Suslin tree and $B$ is an uncountable downwards closed 
	subset of $S_{c \res \beta}$, we can fix some $d \ge_S c \res \beta$ 
	such that $S_d \subseteq B$. 
	Fix a generic filter $G_S$ on $S$ with $d \in G_S$. 
	So $G_S$ is a cofinal branch of $S$, and since $d \in G_S$, 
	$G_S \cap S_{c \res \beta} \subseteq B$. 
	Working in $V[G_S]$, define 
	$$
	W = \bigcup \{ \mathcal Y_e : e \in G_S \cap S_{c \res \beta} \}.
	$$
	So for all $\beta \le \gamma < \omega_1$, 
	$W \cap T_{\gamma}^l = \mathcal Y_{G_S(\gamma)}$. 
	By the last comment of the previous paragraph, $W$ is a downwards closed uncountable 
	subset of $T_{\vec x}$. 
	Using Lemma 4.2, fix a promise $U \subseteq W$ with root $\vec x$.

	By our assumption on $S$, $T$ is still Aronszajn in $V[G_S]$. 
	So we can apply Lemma 5.6 in $V[G_S]$ to find a countable limit ordinal $\delta > \beta$ 
	such that the function $\mathcal U$ with domain $[\delta,\omega_1)$ defined by 
	$\mathcal U(\gamma) = \{ U \cap T_\gamma^l \}$ is a generalized promise 
	which is suitable for $F$. 
	Now let $\dot W$, $\dot U$, $\dot \delta$, 
	and $\dot{\mathcal U}$ be $S$-names which $d$ forces 
	satisfy the above properties.

	Since $S$ is countably distributive, we can extend $d$ to $d'$ which decides 
	$\dot \delta$ and $\dot U \cap T_{\dot \delta}^l$ as 
	$\delta$ and $\mathcal Y_0$, and decides sequences 
	$\langle \delta_n : n < \omega \rangle$ and $\langle \vec b_n : n < \omega \rangle$ 
	which witness that $\dot{\mathcal U}$ is suitable for $F$. 
	Observe that by absoluteness these sequences satisfy the bullet points 
	of Definition 5.5 (Suitable Generalized Promises) in $V$.  
	Working in $V$ and using the countable distributivity of $S$, 
	define $\mathcal V$ with domain $[\delta,\omega_1)$ so that for all 
	$\delta \le \zeta < \omega_1$, $\mathcal V(\zeta)$ is the set 
	of all $\mathcal Y \subseteq T_\zeta^l$ such that for some $d^* \ge_S d'$, 
	$d^* \Vdash_S \mathcal Y = \dot U \cap T_{\zeta}^l$. 
	Note that $\mathcal V(\delta) = \{ \mathcal Y_0 \}$, and 
	each $\mathcal V(\zeta)$ is countable as $S$ is c.c.c.
	It is now routine to check 
	that $\mathcal V$ is a generalized promise in $V$ which is suitable for $F$.

	Applying Proposition 6.3 (Adding Generalized Promises), 
	fix some $G$ such that $(G,\Gamma \cup \{ \mathcal V \}) \le (F,\Gamma)$ and 
	$G$ has top level $\delta$. 
	Since $(F,\Gamma)$ forces that $\dot E$ is dense in $S$, 
	we can fix $(H,\Lambda) \le (G,\Gamma \cup \{ \mathcal V \})$ 
	which decides for some $e \ge_S d'$ 
	that $e \in \dot E$. 
	By Proposition 6.2 (Extension) and the fact that $\dot E$ is forced to be open in $S$, 
	we may assume without loss of generality that the height of $e$, 
	which is $\delta_e$, is the top level of $H$. 
	Now fix $f \ge_S e$ which decides $\dot U \cap T_{\delta_e}^l$ as $\mathcal Y$. 
	Then $\mathcal Y \in \mathcal V(\delta_e)$.

	Since $(H,\Lambda)$ fufills $\mathcal V$, there exists some 
	$\vec y = (y_0,\ldots,y_{l-1}) \in \mathcal Y$ 
	such that for all $i < l$, $H(y_i) = 1$. 
	Then $f$ forces that 
	$$
	\mathcal Y = \dot U \cap T_{\delta_e}^l \subseteq 
	\dot W \cap T_{\delta_e}^l = \mathcal Y_{\dot G_S(\delta_e)} = \mathcal Y_e.
	$$ 
	So $\vec y \in \mathcal Y_e$, and hence $\vec y$ witnesses that $e \in B$. 
	Since $H(y_i) = 1$ for all $i < l$, the definition of $B$ implies that 
	$(H,\Lambda)$ does not force that $e \in \dot E$, which is a contradiction.
\end{proof}

\begin{proof}[Proof of Theorem 8.1.]
	Let $S$ be a normal Suslin tree which forces that $T$ is Aronszajn. 
	To show that $\p$ forces that $S$ is Suslin, by Lemma 1.1(d) 
	it suffices to show that 
	whenever $(F,\Gamma)$ forces that $\dot E$ is a dense open subset of $S$, 
	then there exist $(G,\Sigma) \le (F,\Gamma)$ and $\delta < \omega_1$ 
	such that $(G,\Sigma) \Vdash S_\delta \subseteq \dot E$. 

	Fix a large enough regular cardinal $\lambda$ and a countable $N \prec H(\lambda)$ 
	such that $T$, $\p$, $(F,\Gamma)$, $S$, and $\dot E$ are in $N$. 
	Now proceed as in the proof of Theorem 6.5 (Existence of Total Master Conditions) 
	to build a descending sequence $\langle (F_n,\Gamma_n) : n < \omega \rangle$ of 
	conditions in $N$ below $(F,\Gamma)$ and a lower bound 
	$(G,\Sigma)$ which is a total master condition for $N$, with the following adjustment. 
	Modify the bookkeeping so that for every $c \in S_{N \cap \omega_1}$, 
	there exists some $n < \omega$ such that $(F_{n+1},\Gamma_{n+1})$ forces that $c \in \dot E$. 
	This is possible by Lemma 8.2. 
	Then $(G,\Sigma)$ forces that $S_{N \cap \omega_1} \subseteq \dot E$.
\end{proof}

\section{A Lindel\"{o}f Tree With Non-Lindel\"{o}f Square}

We are now prepared to solve Question 2.9, which asks whether 
the topological square of a Lindel\"{o}f tree is always Lindel\"{o}f.

\begin{thm}
	It is consistent that there exists a normal infinitely splitting $\omega_1$-tree $S$ 
	which is Suslin, and hence Lindel\"{o}f,  
	but the topological product $S \times S$ is not Lindel\"{o}f.
\end{thm}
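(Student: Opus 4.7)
The plan is to apply the forcing $\p(T, \{2\})$ of Section 6 to a derived tree of a carefully chosen Suslin tree. Starting from a ground model satisfying \textsf{CH}, fix a normal, infinitely splitting Suslin tree $S$ with the property that for every pair of distinct elements $x_0, x_1 \in S$ at the same level, $\Vdash_S (S_{x_0} \otimes S_{x_1})$ is Aronszajn. Such a tree exists under $\Diamond$; intuitively, since $x_0$ and $x_1$ are incomparable in $S$, any cofinal branch of $S$ added by forcing with $S$ passes through at most one of them, so the product of the cones above $x_0$ and $x_1$ acquires no cofinal branch in the extension.

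Fix such distinct $x_0, x_1 \in S$ at the same level and set $T = S_{x_0} \otimes S_{x_1}$. Then $T$ is a normal Aronszajn tree each of whose elements has $\omega$-many immediate successors, so the hypotheses of Section 6 apply with $\mathcal{S} = \{2\}$. Force with $\p := \p(T, \{2\})$, which by Corollaries 6.7 and 6.9 is totally proper and, under \textsf{CH}, also $\omega_2$-c.c.; hence $\p$ preserves $\omega_1$, $\omega_2$, and adds no reals. By the choice of $S$, $\Vdash_S T$ is Aronszajn, so Theorem 8.1 implies that $S$ is forced to remain Suslin. By Theorem 2.7, $S$ is therefore Lindel\"of in the fine wedge topology in the extension. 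Meanwhile, the $\p$-generic yields an uncountable downwards closed normal $2$-splitting subtree $U \subseteq T \subseteq S \times S$.

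It remains to show that $S \times S$, equipped with the product of the fine wedge topologies, is not Lindel\"of in the extension. For each $(y, z) \in U$, let $(y^0, z^0)$ and $(y^1, z^1)$ be the two immediate successors of $(y, z)$ in $U$, and consider the basic open neighborhood
$$
V_{(y, z)} := (y \up \setminus \{y^0, y^1\} \up) \times (z \up \setminus \{z^0, z^1\} \up).
$$
Since every element of $U$ strictly above $(y, z)$ must lie above $(y^0, z^0)$ or $(y^1, z^1)$ in $T$, one checks that $V_{(y, z)} \cap U = \{(y, z)\}$, so $U$ is an uncountable discrete subspace of $S \times S$. One then constructs an open cover of $S \times S$ extending $\{V_{(y, z)} : (y, z) \in U\}$ by further basic open rectangles covering $S \times S \setminus U$ so arranged that no countable subfamily covers all of $U$; this adapts the standard argument that a finitely splitting $\omega_1$-tree is non-Lindel\"of in its fine wedge topology to the product setting. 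The main obstacle is precisely this last topological step: the basic open rectangles used to cover $(a, b) \in T \setminus U$ (and $(a, b) \notin T$) must be chosen so that each one meets $U$ in a controlled way, since the height-restriction built into $T \subseteq S \times S$ is not captured by the product topology. Exploiting the downward closure of $U$ and the branching structure, one shows that any countable subcollection of such a cover omits cofinally many $(y, z) \in U$, completing the argument.
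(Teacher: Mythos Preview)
Your forcing setup matches the paper's: start with a normal infinitely splitting Suslin tree $S$ such that $S$ forces $S_{x_0}\otimes S_{x_1}$ to remain Aronszajn (the paper phrases this via the unique branch property, which is equivalent for this purpose), set $T=S_{x_0}\otimes S_{x_1}$, and force with $\p(T,\mathcal S)$; the paper takes $\mathcal S=\omega\setminus 2$ rather than $\{2\}$, but either choice works, and Theorem~8.1 preserves the Suslinness of $S$ exactly as you say.

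The genuine gap is the final topological step, which you yourself flag as ``the main obstacle'' and then leave unproved. Showing that $U$ is discrete in $S\times S$ is fine, but you never construct the remaining open sets covering $(S\times S)\setminus U$ in such a way that each of them meets $U$ only countably often. The difficulty is exactly the one you identify: points $(a,b)\in S\times S$ with $\h_S(a)\neq\h_S(b)$ are not governed by the tree order on $T$, and a basic product rectangle around such a point can contain uncountably many elements of $U$ unless it is chosen with care. This case analysis can in fact be carried out (using the downward closure and $2$-splitting of $U$ to exclude $U$ from suitable rectangles), but you have not done it, so as written the argument is incomplete.

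The paper avoids this case analysis entirely by a cleaner device. It defines
\[
H:S\times S\to S\otimes S,\qquad H(a,b)=\bigl(a\res\gamma,\;b\res\gamma\bigr),\quad \gamma=\min(\h_S(a),\h_S(b)),
\]
and verifies that $H$ is a continuous surjection when the codomain carries the fine wedge topology (Lemma~9.3). Since continuous images of Lindel\"of spaces are Lindel\"of, if $S\times S$ were Lindel\"of then so would be $S\otimes S$; but $S\otimes S$ contains the (downward closure of the) finitely splitting subtree $U$, so Theorem~2.7 says it is not Lindel\"of. This reduces the product-topology question to the already-established tree criterion and eliminates the need for your direct cover construction.
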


Recall that in general the property of being Lindel\"{o}f is not preserved under products. 
For example, the Sorgenfrey line is Lindel\"{o}f but its topological square is not 
(\cite[Section 16]{willard}.

Before proving Theorem 9.1, we would like to clarify the relationship between 
the subspace topology on $S \otimes S$ induced by the product topology on $S \times S$ 
and the fine wedge topology on the tree $S \otimes S$.

\begin{proposition}
	Let $S$ be an infinitely splitting $\omega_1$-tree. 
	Then the subspace topology on $S \otimes S$ induced 
	by the product topology on $S \times S$ 
	is strictly finer than the fine wedge topology on $S \otimes S$.
\end{proposition}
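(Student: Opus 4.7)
The plan is to establish that every fine wedge open subset of $S \otimes S$ is open in the product subspace topology, and then to exhibit a subspace open set that fails to be fine wedge open. For the first direction it suffices to check the two kinds of subbasic fine wedge sets. Given $p = (a,b) \in S \otimes S$, we have
\[
p\up = (a\up \times b\up) \cap (S \otimes S),
\]
which is open in the subspace topology, while
\[
(S \otimes S) \setminus (p\up) = \bigl[((S \setminus a\up) \times S) \cup (S \times (S \setminus b\up))\bigr] \cap (S \otimes S)
\]
is also open in the subspace topology because $a\up$ and $b\up$ are fine wedge open in $S$. Hence every fine wedge open subset of $S \otimes S$ is open in the subspace topology.

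For strictness, I would fix any $(a,b) \in S \otimes S$ together with a choice of $a_1 \in \Imm_S(a)$ and $b_1 \in \Imm_S(b)$, and consider the subspace open set
\[
U = \bigl[(a\up \setminus a_1\up) \times (b\up \setminus b_1\up)\bigr] \cap (S \otimes S),
\]
which contains $(a,b)$. Assume toward a contradiction that $U$ is a fine wedge neighborhood of $(a,b)$. Then by the local base description following Definition 2.6, there exists a finite set $F \subseteq \Imm_{S \otimes S}((a,b))$ with $(a,b)\up \setminus F\up \subseteq U$. Because $S$ is infinitely splitting, $\Imm_S(a)$ is infinite, so one can choose $a' \in \Imm_S(a) \setminus \{a_1\}$ such that $(a', b_1) \notin F$. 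The point $(a', b_1)$ has the same height $\h_S(a)+1$ as every member of $F$, so $(a', b_1) \notin F\up$; also $(a', b_1) \in (a,b)\up$. Thus $(a', b_1)$ lies in $(a,b)\up \setminus F\up$. On the other hand, the second coordinate $b_1$ belongs to $b_1\up$, so $(a', b_1) \notin U$, contradicting $(a,b)\up \setminus F\up \subseteq U$.

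The main, though rather mild, obstacle is isolating the structural difference: a basic fine wedge neighborhood of $(a,b)$ in $S \otimes S$ can only remove finitely many specific paired immediate successors, whereas a basic product subspace neighborhood can cut out an entire fiber of immediate successors whose second coordinate extends $b_1$. The infinite splitting of $S$ is exactly what ensures that such a fiber is infinite and hence cannot be matched by finitely many pointwise exclusions.
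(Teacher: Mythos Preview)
Your proof is correct. For the first direction you take a cleaner route than the paper: rather than starting from a basic fine wedge neighborhood $(x,y)\up \setminus F\up$ and doing a case analysis on whether the point in question equals $(x,y)$, you simply verify that each \emph{subbasic} fine wedge set $p\up$ and $(S\otimes S)\setminus p\up$ is subspace-open, which immediately handles all fine wedge open sets. One small wording slip: in your justification for the complement, the relevant fact is that $S\setminus a\up$ and $S\setminus b\up$ are fine wedge open (they are subbasic), not that $a\up$ and $b\up$ are; both statements are true, but it is the former you actually use. For strictness, your argument is essentially the paper's: the paper uses the asymmetric set $(x\up)\times(y\up\setminus z\up)$, you use the symmetric $(a\up\setminus a_1\up)\times(b\up\setminus b_1\up)$, and in both cases infinite splitting lets one find an immediate successor pair avoiding the finite $F$ while landing in the forbidden fiber.
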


\begin{proof}
	For showing that the subspace topology is finer than the fine wedge topology, 
	it suffices to show that every member of the subbase of 
	the fine wedge topology on $S \otimes S$ is open in the subspace topology. 
	Consider $U = (x,y) \up$ for some $(x,y) \in S \otimes S$. 
	Then $U = ((x \up) \times (y \up)) \cap (S \otimes S)$, which is open in 
	the subspace topology. 
	Now consider $V = S \otimes S \setminus (x,y) \up$ for some $(x,y) \in S \otimes S$. 
	Let $(a,b) \in V$, and we will find a set $W$ which is open in $S \times S$ 
	such that $(a,b) \in W \cap (S \otimes S) \subseteq V$. 
	By the definition of $V$, either $a \not \ge_T x$ or $b \not \ge_T y$. 
	By symmetry, assume the former. 
	Now check that $W = (T \setminus x \up) \times b \up$ works.

	It remains to show that the subspace topology is strictly finer than the 
	fine wedge topology. 
	Fix any distinct $x$ and $y$ of the same height in $S$ and let $z$ be an immediate 
	successor of $y$. 
	In the product topology on $S \times S$, the set 
	$V = (x \up) \times (y \up \setminus z \up)$ is open, 
	and hence $V \cap (S \otimes S)$ is open in the subspace topology. 
	Clearly, $(x,y) \in V$. 
	We claim that there does not exist any set $U \subseteq S \otimes S$ 
	which is a basic open set in the fine wedge topology 
	such that $(x,y) \in U \subseteq V$. 
	Otherwise, there exists a finite set $F$ of immediate successors of $(x,y)$ in $S \otimes S$ 
	such that $(x,y) \up \setminus F \up \subseteq V$. 
	Since $S$ is infinitely splitting, pick an immediate successor $d$ of $x$ such that 
	for all $(a,b) \in F$, $a \ne d$. 
	Then $(d,z) \notin F$. 
	So $(d,z) \in (x,y) \up \setminus F \up$, but certainly 
	$(d,z) \notin V$.
\end{proof}

Since any open set in the fine wedge topology on $S \otimes S$ 
is open in the subspace topology on $S \otimes S$ induced by the product topology, 
it easily follows that 
if the fine wedge topology on $S \otimes S$ is not Lindel\"{o}f, then neither is 
the subspace topology on $S \otimes S$. 
However, this is not sufficient for showing that the product topology on $S \times S$ 
is non-Lindel\"{o}f, since in general being Lindel\"{o}f is not preserved under subspaces. 
For example, $\omega_1+1$ with the order topology is Lindel\"{o}f, and in fact 
is compact, but its subspace $\omega_1$ is not Lindel\"{o}f.

However, the continuous image of any Lindel\"{o}f space is Lindel\"{o}f.

\begin{lemma}
	Let $S$ be an infinitely splitting $\omega_1$-tree. 
	Define $H : S \times S \to S \otimes S$ by 
	$$
	H(x,y) = (x \res \min(\h_S(x),\h_S(y)),y \res \min(\h_S(x),\h_S(y))),
	$$
	where we consider the domain of $H$ with the product topology and the 
	codomain of $H$ with the fine wedge topology. 
	Then $H$ is continuous and surjective.
\end{lemma}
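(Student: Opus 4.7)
The plan is to verify the two claims separately, with surjectivity being immediate and continuity reducing to a check on a subbase.

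For surjectivity, note that any element of $S \otimes S$ is a pair $(a,b)$ with $\h_S(a) = \h_S(b)$, so $\min(\h_S(a), \h_S(b))$ is this common height and $H(a,b) = (a,b)$. Thus $H$ restricted to $S \otimes S$ is the identity map on $S \otimes S$, which already gives surjectivity.

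For continuity, it suffices to show that the preimage under $H$ of every subbasic open set in the fine wedge topology on $S \otimes S$ is open in the product topology on $S \times S$. The subbase consists of sets of the form $(x,y) \up$ and $(S \otimes S) \setminus (x,y) \up$ for $(x,y) \in S \otimes S$. The key computation is the identity $H^{-1}((x,y) \up) = (x \up) \times (y \up)$. One inclusion is immediate: if $(a,b) \in H^{-1}((x,y) \up)$ and $m = \min(\h_S(a), \h_S(b))$, then $x \le_S a \res m$ and $y \le_S b \res m$, hence $x \le_S a$ and $y \le_S b$. Conversely, if $(a,b) \in (x \up) \times (y \up)$, then since $\h_S(x) = \h_S(y)$ (as $(x,y) \in S \otimes S$) and $\h_S(x) \le \h_S(a)$ while $\h_S(y) \le \h_S(b)$, we obtain $\h_S(x) = \h_S(y) \le m$. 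Consequently $x = a \res \h_S(x) \le_S a \res m$ and similarly $y \le_S b \res m$, so $H(a,b) \in (x,y) \up$. Since $x \up$ and $y \up$ are subbasic open in the fine wedge topology on $S$, their Cartesian product is open in the product topology on $S \times S$.

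For the complementary subbasic set, I would use the fact that $H^{-1}(A^c) = (H^{-1}(A))^c$ to write
$$
H^{-1}\bigl((S \otimes S) \setminus (x,y)\up\bigr) = (S \times S) \setminus \bigl((x\up) \times (y\up)\bigr) = \bigl((S \setminus x\up) \times S\bigr) \cup \bigl(S \times (S \setminus y\up)\bigr),
$$
which is a union of products of subbasic open sets of $S$ (recall that $S \setminus x\up$ is itself subbasic) and hence open in $S \times S$. The only mildly delicate point in the entire argument is the simultaneous-dropdown verification in the converse direction above, which makes essential use of the defining property of the product tree that the two coordinates have equal height; beyond this, no serious obstacle is anticipated.
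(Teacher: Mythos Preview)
Your proof is correct and is in fact cleaner than the paper's argument. The paper verifies continuity by working with basic open sets of the fine wedge topology on $S \otimes S$, that is, sets of the form $(x,y)\up \setminus F\up$ for $F$ a finite set of immediate successors of $(x,y)$; given a point $(a,b)$ in the preimage of such a set, it produces an explicit product-open neighborhood of $(a,b)$ via a case split on whether $\min(\h_S(a),\h_S(b))$ equals or exceeds the common height of $x$ and $y$. Your route avoids this entirely by dropping down to the subbase and establishing the single identity $H^{-1}((x,y)\up) = (x\up)\times(y\up)$, from which both subbasic preimages are immediately seen to be open. The paper's approach buys nothing extra here; your argument is simply a more economical proof of the same lemma.
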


\begin{proof}
	Clearly, $H$ is surjective. 
	For continuity, it suffices to show that whenever $U$ is a member of the subbase 
	of the fine wedge topology on $S \otimes S$, then $H^{-1}(U)$ is open in $S \times S$. 
	First, suppose that $U = (x,y) \up$, where $(x,y) \in S \otimes S$. 
	It is straightforward to show that 
	$H^{-1}(U) = x \up \times y \up$, which is open in $S \times S$. 
	Secondly, assume that $V = S \otimes S \setminus (x,y) \up$, 
	where $(x,y) \in S \otimes S$. 
	Then $H^{-1}(V) = S \times S \setminus H^{-1}((x,y) \up) = 
	S \times S \setminus (x \up \times y \up)$, which is also open in $S \times S$.
\end{proof}

\begin{corollary}
	Let $S$ be an infinitely splitting $\omega_1$-tree. 
	Suppose that $S \otimes S$ contains a finitely splitting uncountable 
	downwards closed subtree. 
	Then $S \times S$ is not Lindel\"{o}f.
\end{corollary}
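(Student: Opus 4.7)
The plan is to chain together the finitely-splitting/Lindel\"of characterization of Theorem 2.7, the continuous surjection of Lemma 9.3, and the standard fact that Lindel\"of-ness is preserved under continuous images.

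First, by hypothesis $S \otimes S$ contains an uncountable downwards closed finitely splitting subtree. Applying Theorem 2.7 to the $\omega_1$-tree $S \otimes S$, this means that $S \otimes S$ equipped with the fine wedge topology is not Lindel\"of.

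Next, I invoke Lemma 9.3 to get the continuous surjection $H : S \times S \to S \otimes S$, where the domain carries the product topology and the codomain carries the fine wedge topology. Suppose for contradiction that $S \times S$ were Lindel\"of. Since the continuous image of a Lindel\"of space is Lindel\"of (every open cover of the codomain pulls back to an open cover of the domain, which admits a countable subcover, whose image is a countable subcover of the original), it would follow that $S \otimes S$ with the fine wedge topology is Lindel\"of, contradicting the previous paragraph. Hence $S \times S$ is not Lindel\"of.

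There is no real obstacle here; the corollary is essentially a bookkeeping argument combining three already-established facts. The only thing one must be careful about is that Proposition 9.2 only tells us the subspace topology on $S \otimes S$ induced from $S \times S$ is finer than the fine wedge topology, which by itself is not enough to transfer non-Lindel\"of-ness from the fine wedge $S \otimes S$ to the product $S \times S$ (since Lindel\"of-ness need not pass to subspaces). This is precisely why Lemma 9.3 is used rather than Proposition 9.2: the surjection $H$ is defined on all of $S \times S$, so the continuous-image argument applies directly.
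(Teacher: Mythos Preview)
Your proof is correct and follows essentially the same route as the paper: both combine Theorem 2.7 with Lemma 9.3 and the preservation of Lindel\"of-ness under continuous images, arguing the contrapositive (or equivalently, by contradiction). Your additional remark explaining why Proposition 9.2 alone would not suffice is accurate and mirrors the paper's own discussion preceding Lemma 9.3.
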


\begin{proof}
	If $S \times S$ is Lindel\"{o}f, then by Lemma 9.3, so is the fine wedge topology 
	on $S \otimes S$. 
	By Theorem 2.7, $S \otimes S$ does not 
	contain a finitely splitting uncountable downwards closed subtree. 
\end{proof}

We now prove that 
it is consistent that there exists a normal infinitely splitting $\omega_1$-tree $S$ 
which is Suslin, and hence Lindel\"{o}f,  
but the topological product $S \times S$ is not Lindel\"{o}f.

\begin{proof}[Proof of Theorem 9.1.]
	Start with a model in which there exists a normal infinitely splitting Suslin 
	tree $S$ which has the \emph{unique branch property}, that is, forcing with $S$ 
	yields exactly one cofinal branch of $S$. 
	For example, assuming $\Diamond$ there exists a normal 
	infinitely splitting Suslin tree 
	which is \emph{free}, which means that any derived tree of $S$ is Suslin 
	(the rigid Suslin tree constructed under $\Diamond$ in 
	\cite[Chapter V]{devlinj} is free). 
	Fix distinct elements $x$ and $y$ on the same level of $S$. 
	We claim that if we force with $S$, then in the generic extension, the tree 
	$S_x \otimes S_y$ is still Aronszajn. 
	If not, then in $V^S$ a cofinal branch through $S_x \otimes S_y$ yields a cofinal 
	branch above $x$ and a cofinal branch above $y$. 
	Since $x$ and $y$ 
	are incomparable, these branches are distinct, which 
	contradicts the fact that $S$ has the unique branch property.
	
	Consider the forcing poset $\p = \p(S_x \otimes S_y,\omega \setminus 2)$. 
	Then $\p$ adds an 
	uncountable downwards closed finitely splitting subtree of $S_x \otimes S_y$. 
	Since $S$ forces that $S_x \otimes S_y$ is Aronszajn, 
	by Theorem 8.1 it follows that $\p$ forces that $S$ is Suslin. 
	Let $G$ be a generic filter on $\p$. 
	Then in $V[G]$, $S$ is Suslin, and by absoluteness it is still normal and 
	infinitely splitting. 
	In $V[G]$, there exists an uncountable downwards closed finitely splitting 
	subtree $U$ of $S_x \otimes S_y$. 
	Letting $U {\downarrow}$ denote the downward closure of $U$ in $S \otimes S$, 
	which just consists of $U$ together with the chain of elements of $S \otimes S$ 
	which are below $(x,y)$. 
	Then $U {\downarrow}$ is also finitely splitting. 
	By Corollary 9.4, $S \times S$ is not Lindel\"{o}f.
\end{proof}

\section{Additional Properties of the Forcing Poset}

In this section we briefly discuss some additional properties satisfied by the forcing 
poset $\p$ of Section 6 which imply that this forcing can be iterated with countable 
support in a well-behaved way. 
These properties are the $\omega_2$-p.i.c., $(< \! \omega_1)$-properness, 
and Dee-completeness.

\begin{definition}[{\cite[Chapter VIII, Section 2]{properimproper}}]
	A forcing poset $\q$ has the \emph{$\omega_2$-p.i.c.} if for any large enough 
	regular cardinal $\lambda > \omega_2$, assuming:
	\begin{itemize}
	\item $\alpha < \beta < \omega_2$;
	\item $N_\alpha$ and $N_\beta$ are countable elementary 
	substructures of $H(\lambda)$ with $\q \in N_\alpha \cap N_\beta$;
	\item $\alpha \in N_\alpha$, $\beta \in N_\beta$, $N_\alpha \cap \omega_2 \subseteq \beta$, 
	and $N_\alpha \cap \alpha = N_\beta \cap \beta$;
	\item there exists an isomorphism $h : (N_\alpha,\in) \to (N_\beta,\in)$ 
	which is the identity on 
	$N_\alpha \cap N_\beta$ and satisfies that $h(\alpha) = \beta$;
	\end{itemize}
	then for any $p \in N_\alpha \cap \q$, there exists a condition $q \le p, h(p)$ 
	which is both $(N_\alpha,\q)$-generic and $(N_\beta,\q)$-generic and satisfies 
	that for any $u \in N_\alpha \cap \q$ and for any $r \le q$, 
	there exists $s \le r$ such that ($s \le u$ iff $s \le h(u)$).
	\end{definition}

Our interest in this property is due to the following facts.

\begin{thm}[{\cite[Chapter VIII, Lemma 2.3]{properimproper}}]
	Assuming $\textsf{CH}$, any forcing poset which has the  
	$\omega_2$-p.i.c.\ is $\omega_2$-c.c.
\end{thm}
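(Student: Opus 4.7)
The plan is to argue by contradiction: suppose $\q$ has the $\omega_2$-p.i.c.\ but admits an antichain $A = \{ p_\xi : \xi < \omega_2 \}$ of size $\omega_2$. Fix a large enough regular $\lambda > \omega_2$ and, for each $\xi < \omega_2$, choose a countable $N_\xi \prec H(\lambda)$ with $\q, \xi, p_\xi \in N_\xi$. The strategy is to extract $\alpha < \beta < \omega_2$ for which the hypotheses of the $\omega_2$-p.i.c.\ hold, together with an isomorphism $h : N_\alpha \to N_\beta$ satisfying $h(p_\alpha) = p_\beta$. Applying the p.i.c.\ to $p = p_\alpha$ then produces a common extension $q \le p_\alpha, p_\beta$, contradicting the antichain assumption.

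To extract such $\alpha, \beta$, first invoke \textsf{CH}: since $|H(\omega_1)| = \aleph_1$, the transitive collapse $(\bar N_\xi, \bar \xi, \bar p_\xi, \bar{\q}, \ldots)$ of the structure $(N_\xi, \in, \xi, p_\xi, \q, \ldots)$ takes only $\aleph_1$-many isomorphism types. By pigeonhole, there is $I \subseteq \omega_2$ of size $\aleph_2$ on which this collapsed structure is fixed. In particular, for any $\xi_1, \xi_2 \in I$, the composition $\pi_{\xi_2}^{-1} \circ \pi_{\xi_1}$ of collapse maps yields an isomorphism $N_{\xi_1} \to N_{\xi_2}$ sending $\xi_1 \mapsto \xi_2$, $p_{\xi_1} \mapsto p_{\xi_2}$, and fixing $\q$ and other designated parameters.

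Next, apply the $\Delta$-system lemma to $\{ N_\xi \cap \omega_2 : \xi \in I \}$ to pass to $I' \subseteq I$ of size $\aleph_2$ whose associated countable sets form a $\Delta$-system with root $R$. Further thinning, using that $\xi \in N_\xi$ and standard reflection arguments, we may also arrange that for any $\alpha < \beta$ in $I'$ we have $\sup(N_\alpha \cap \omega_2) < \beta$ and $N_\alpha \cap \alpha = N_\beta \cap \beta = R$, and that the canonical isomorphism $h : N_\alpha \to N_\beta$ described above is the identity on $N_\alpha \cap N_\beta$. Pick any such $\alpha < \beta$ in $I'$. Then the four bullet points of Definition 10.1 are met, with $h(p_\alpha) = p_\beta$. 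The $\omega_2$-p.i.c.\ now produces $q \le p_\alpha, p_\beta$, contradicting that $A$ is an antichain.

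The main obstacle is the simultaneous arrangement in paragraph three: we need a single thinning that secures an honest isomorphism $h$ sending $\alpha \mapsto \beta$ and $p_\alpha \mapsto p_\beta$, \emph{and} also makes $h$ fix $N_\alpha \cap N_\beta$ pointwise, \emph{and} forces $N_\alpha \cap \alpha = N_\beta \cap \beta$, \emph{and} keeps $N_\alpha \cap \omega_2$ below $\beta$. Controlling the pointwise fixing on the intersection is what genuinely requires \textsf{CH}: it is what lets us insist the two collapse maps $\pi_\alpha$ and $\pi_\beta$ agree on $N_\alpha \cap N_\beta$ after thinning, so that $h = \pi_\beta^{-1} \circ \pi_\alpha$ acts as the identity there. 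Once this bookkeeping is accomplished, the contradiction is immediate from the definition of the $\omega_2$-p.i.c.
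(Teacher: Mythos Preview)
The paper does not give its own proof of this theorem; it is simply quoted from Shelah's book as a known result. Your outline is the standard argument for this fact and is correct: assume an antichain of size $\omega_2$, attach countable elementary submodels, use \textsf{CH} to bound the number of transitive-collapse types by $\aleph_1$, thin via the $\Delta$-system lemma and further bookkeeping to realize the hypotheses of Definition~10.1 with $h(p_\alpha)=p_\beta$, and then read off a common extension $q\le p_\alpha,p_\beta$ from the $\omega_2$-p.i.c., contradicting the antichain. You have also correctly isolated the one genuinely delicate step, namely arranging simultaneously that $N_\alpha\cap\alpha=N_\beta\cap\beta$, that $N_\alpha\cap\omega_2\subseteq\beta$, and that $h=\pi_\beta^{-1}\circ\pi_\alpha$ fixes $N_\alpha\cap N_\beta$ pointwise; this is exactly where \textsf{CH} (via the $\aleph_1$ bound on collapse types and on $H(\omega_1)$) does the work.
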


\begin{thm}
	Let $\alpha \le \omega_2$. 
	Suppose that $\langle \p_i, \dot \q_j : i \le \alpha, \ j < \alpha \rangle$ 
	is a countable support forcing iteration such that for all $i < \alpha$, 
	$\p_i$ forces that $\dot \q_i$ has the $\omega_2$-p.i.c. 
	If $\alpha < \omega_2$, then $\p_\alpha$ has the $\omega_2$-p.i.c., and if 
	$\alpha = \omega_2$ and \textsf{CH} holds, then $\p_{\omega_2}$ is $\omega_2$-c.c.
\end{thm}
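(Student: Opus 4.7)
The plan is to prove by transfinite induction on $\nu \le \omega_2$ that $\p_\nu$ has the $\omega_2$-p.i.c.\ whenever $\nu < \omega_2$, and to derive the $\omega_2$-c.c.\ at $\nu = \omega_2$ via a $\Delta$-system argument combined with Theorem 10.2 at intermediate stages. Throughout, $N_\alpha, N_\beta, h, p$ denote data as in Definition 10.1 (with $\alpha, \beta < \omega_2$ the ordinals of that definition, not to be confused with the stage $\nu$ of the iteration).

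The base case $\nu = 0$ is vacuous. For the successor case $\nu = \mu + 1$, I would first apply the inductive hypothesis to $p \res \mu \in N_\alpha \cap \p_\mu$ to obtain a condition $q_\mu \le p \res \mu, h(p) \res \mu$ which is simultaneously $(N_\alpha, \p_\mu)$- and $(N_\beta, \p_\mu)$-generic and satisfies the homogeneity clause. Below $q_\mu$ the isomorphism $h$ lifts to an isomorphism between $N_\alpha[\dot G_\mu]$ and $N_\beta[\dot G_\mu]$, after which the hypothesis that $\dot{\q}_\mu$ is forced to have the $\omega_2$-p.i.c.\ allows us to extend at stage $\mu$. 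In the limit case when $\cf(\nu) > \omega$, the countable support of $p$ is bounded below $\nu$ by some $\gamma \in N_\alpha$, and similarly for $h(p)$ by $h(\gamma) \in N_\beta$, so taking $\gamma' = \max(\gamma, h(\gamma))$ reduces the problem to the inductive hypothesis at $\p_{\gamma'}$. The main work is the case $\cf(\nu) = \omega$: I would perform a fusion analogous to the proof of Theorem 3.1, fixing a cofinal sequence $\langle \nu_n : n < \omega \rangle$ in $N_\alpha \cap \nu$ with $\nu_0 = 0$, enumerating the dense open subsets of $\p_\nu$ in $N_\alpha$ as $\langle D_n : n < \omega \rangle$, and enumerating $N_\alpha \cap \p_\nu$ as $\langle u_n : n < \omega \rangle$. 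Then construct descending sequences of $\p_{\nu_n}$-names $\dot p_n$ for members of $N_\alpha \cap \p_\nu$ (each extending $\dot p_{n-1}$ into $D_{n-1}$ and deciding compatibility with $u_{n-1}$) together with matching $q_n \in \p_{\nu_n}$ that simultaneously witness $N_\alpha$- and $N_\beta$-genericity, applying Lemma 3.2 twice at each step together with the inductive homogeneity clause at $\p_{\nu_n}$. The union $q = \bigcup_n q_n$ would be the desired lower bound.

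The hardest step will be the homogeneity clause at the countable-cofinality limit: for each $u \in N_\alpha \cap \p_\nu$ and each $r \le q$, producing $s \le r$ such that $s \le u$ iff $s \le h(u)$. This requires the fusion to be carried out symmetrically via $h$, so that decisions about $u_n$ on the $N_\alpha$-side mirror decisions about $h(u_n)$ on the $N_\beta$-side, leveraging the homogeneity property supplied inductively at each $\p_{\nu_n}$. Finally, for $\nu = \omega_2$ under \textsf{CH} I would argue $\omega_2$-c.c.\ directly: given a candidate antichain $\{ p_\xi : \xi < \omega_2 \} \subseteq \p_{\omega_2}$, apply the $\Delta$-system lemma to the countable supports to extract $\omega_2$ members whose supports share a common root $r$, and fix $\gamma < \omega_2$ with $r \subseteq \gamma$. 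By \textsf{CH} there are only $\omega_1$ isomorphism types of countable elementary substructures of $H(\lambda)$ relative to fixed parameters, so two of the $p_\xi$'s lie in isomorphic $N$'s satisfying the p.i.c.\ hypotheses at stage $\gamma$; applying the inductive $\omega_2$-p.i.c.\ of $\p_\gamma$ to the corresponding $\p_\gamma$-restrictions yields a common lower bound there, which combines with the pairwise disjointness of the tails outside $r$ to produce a common lower bound in $\p_{\omega_2}$, contradicting antichain-ness.
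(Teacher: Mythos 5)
A preliminary remark: the paper offers no proof of this theorem. It is quoted as a known result of Shelah --- the preservation lemma for the $\omega_2$-p.i.c.\ from \cite[Chapter VIII]{properimproper}, companion to the Lemma 2.3 cited immediately before it --- and is used as a black box. So there is nothing in the paper to compare your outline against; the following assesses it on its own terms.

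Your outline does follow the standard Shelah argument in its overall shape, and you correctly locate the real difficulty in propagating the homogeneity clause through the fusion. However, one step is wrong as stated: the treatment of limit $\nu$ with $\cf(\nu)>\omega$. Bounding the supports of $p$ and $h(p)$ by some $\gamma'\in N_\alpha\cap N_\beta$ does not reduce the problem to $\p_{\gamma'}$, because the condition you must produce has to be $(N_\alpha,\p_{\nu})$-generic and $(N_\beta,\p_{\nu})$-generic, and an $(N_\alpha,\p_{\gamma'})$-generic condition extended trivially need not be $(N_\alpha,\p_\nu)$-generic: dense open subsets of $\p_\nu$ lying in $N_\alpha$ constrain the tail beyond $\gamma'$, and the trivial condition of a nontrivial proper iterand is typically not generic for a countable model. (For instance, for the poset $\p(T,\mathcal S)$ of this paper, the dense set of conditions with top level at least a fixed $\delta\in N\cap\omega_1$ meets $N$ in only countably many subtree functions, and any condition whose subtree function below $\delta$ is not in $N$ is incompatible with all of them.) The correct treatment, and the one in Shelah's proof, runs the fusion at \emph{every} limit $\nu$ along an $\omega$-sequence cofinal in $\sup(N_\alpha\cap\nu)$ --- an ordinal that always has countable cofinality since $N_\alpha$ is countable --- with no case split on $\cf(\nu)$; one must additionally reconcile this with the possibly different set $N_\beta\cap\nu$ via $h$. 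Separately, in the $\omega_2$-c.c.\ argument the $\Delta$-system plus isomorphism-type counting is not quite enough: to invoke the p.i.c.\ at a bounded stage you must also arrange the structural hypotheses of Definition 10.1 ($N_\xi\cap\omega_2\subseteq\eta$, $N_\xi\cap\xi=N_\eta\cap\eta$, $h(\xi)=\eta$, $h(p_\xi)=p_\eta$), which requires a pressing-down argument on the ordinals $\xi$ in addition to \textsf{CH}. These points are repairable, but as written the $\cf(\nu)>\omega$ case is a genuine gap.
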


\begin{proposition}
	The forcing poset $\p$ has the $\omega_2$-p.i.c.
\end{proposition}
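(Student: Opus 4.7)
Fix $\lambda$, $\alpha$, $\beta$, $N_\alpha$, $N_\beta$, and $h$ as in Definition 10.1, and let $p = (F, \Gamma) \in N_\alpha \cap \p$. The plan is to apply Theorem 6.5 (Existence of Total Master Conditions) with $N = N_\alpha$, $X = \emptyset$, and the function $h^*$ sending each generalized promise $\mathcal U \in N_\alpha$ to $h(\mathcal U) \in N_\beta$. Standard consequences of the $\omega_2$-p.i.c.\ setup — in particular, that $h$ is the identity on hereditarily countable objects whose transitive closure lies in $N_\alpha \cap N_\beta$, which may be arranged to include all of $N_\alpha \cap H(\omega_1)$ — give $h(F) = F$, so $h(p) = (F, h(\Gamma))$ where $h(\Gamma) = \{h(\mathcal U) : \mathcal U \in \Gamma\}$. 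Moreover, for any $\mathcal U \in N_\alpha$ with base level $\gamma$, $h(\mathcal U)$ is a generalized promise with the same dimension and base level, and since each value $\mathcal U(\zeta)$ for $\zeta < N_\alpha \cap \omega_1 = N_\beta \cap \omega_1$ is hereditarily countable and belongs to $N_\alpha \cap N_\beta$, the agreement $\mathcal U \res [\gamma, N_\alpha \cap \omega_1) = h^*(\mathcal U) \res [\gamma, N_\alpha \cap \omega_1)$ required by Theorem 6.5 holds.

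Theorem 6.5 then yields $(G, \Sigma) \le (F, \Gamma)$ with top level $N_\alpha \cap \omega_1$ and $\Sigma \subseteq N_\alpha$ that is a total master condition for $N_\alpha$; inspection of Case 3 of its proof, which runs the bookkeeping over both $\mathcal U(N_\alpha \cap \omega_1)$ and $h(\mathcal U)(N_\alpha \cap \omega_1)$ simultaneously, shows that $G$ fulfills every $h(\mathcal U)$ for $\mathcal U \in \Sigma$ as well. Setting $h(\Sigma) = \{h(\mathcal U) : \mathcal U \in \Sigma\}$ and $q = (G, \Sigma \cup h(\Sigma))$, we have $q \in \p$ with $q \le p$ (since $\Gamma \subseteq \Sigma$) and $q \le h(p) = (F, h(\Gamma))$ (since $h(\Gamma) \subseteq h(\Sigma)$). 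Genericity of $q$ over $N_\beta$ follows by symmetry: given a dense open $D \in N_\beta$, the set $h^{-1}(D) \in N_\alpha$ is dense open, so genericity of $(G, \Sigma)$ for $N_\alpha$ produces $v \in h^{-1}(D) \cap N_\alpha$ with $(G, \Sigma) \le v$; then $h(v) \in D \cap N_\beta$ and $q \le h(v)$ because $\Gamma_v \subseteq \Sigma$ implies $h(\Gamma_v) \subseteq h(\Sigma) \subseteq \Sigma \cup h(\Sigma)$.

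The deciding property reduces to the symmetry claim: for every $v \in N_\alpha \cap \p$, $q \le v$ iff $q \le h(v)$. If $q \le v$, then $\Gamma_v \subseteq \Sigma \cup h(\Sigma)$; decomposing $\Gamma_v = (\Gamma_v \cap \Sigma) \cup (\Gamma_v \cap h(\Sigma))$ and observing that elements of $\Gamma_v \cap h(\Sigma)$ lie in $N_\alpha \cap N_\beta$ and are therefore fixed by $h$, we obtain $h(\Gamma_v) \subseteq h(\Sigma) \cup (\Gamma_v \cap h(\Sigma)) \subseteq \Sigma \cup h(\Sigma)$, so $q \le h(v)$; the converse is symmetric. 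Now given $u \in N_\alpha \cap \p$ and $r \le q$, the set $D_u = \{v \in \p : v \le u \text{ or } v \perp u\}$ is dense open and in $N_\alpha$, so there is $v_u \in D_u \cap N_\alpha$ with $q \le v_u$, and hence $q \le h(v_u)$ by the symmetry claim. If $v_u \le u$ then $r \le q \le u$ and $h(v_u) \le h(u)$ yield $r \le h(u)$; if $v_u \perp u$ then $h(v_u) \perp h(u)$ by elementarity, whence $r \perp u$ and $r \perp h(u)$. In either case $s = r$ witnesses the biconditional. The main technical burden is carefully justifying the fixing property of $h$ on hereditarily countable objects of $N_\alpha$ used in the first paragraph; this is a standard feature of the $\omega_2$-p.i.c.\ construction of $N_\alpha, N_\beta$, but it must be checked with care since it underpins both $h(F) = F$, the hypothesis on $h^*$, and the symmetry claim.
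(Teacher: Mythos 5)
Your proof is correct and follows essentially the same route as the paper's: apply Theorem 6.5 with the promise-translation map induced by $h$, take $q = (G, \Sigma \cup h[\Sigma])$, and handle the deciding clause via the dense open set of conditions that decide compatibility with $u$, with $s=r$ witnessing the biconditional in both cases. The only substantive difference is presentational --- you make explicit that $h$ fixes $F$ and isolate a general symmetry claim ($q \le v$ iff $q \le h(v)$), both of which the paper leaves implicit; note only that the fixing of $F$ is not something one ``arranges'' but follows from the p.i.c.\ hypotheses themselves, since a countable member of $N_\alpha$ all of whose elements lie in $N_\alpha \cap N_\beta$ is sent by $h$ to a countable member of $N_\beta$ with the same elements.
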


\begin{proof}
	Let $\lambda$, $\alpha$, $\beta$, $N_\alpha$, $N_\beta$, and $h$ be as 
	in Definition 10.1. 
	Note that $\alpha$ and $\beta$ are uncountable and so 
	$N_\alpha \cap \omega_1 = N_\beta \cap \omega_1$. 
	It easily follows that whenever $\mathcal U \in N_\alpha$ is a  
	generalized promise on $T$ with dimension $l$ and base level $\gamma$, 
	then $h(\mathcal U)$ is a generalized promise on $T$ with 
	dimension $l$ and base level $\gamma$, and moreover, 
	$\mathcal U \res [\gamma,N_\alpha \cap \omega_1) = 
	h(\mathcal U) \res [\gamma,N_\alpha \cap \omega_1)$. 
	Consider $(F,\Gamma) \in N_\alpha \cap \p$. 
	By Theorem 6.5 (Existence of Total Master Conditions), 
	there exists $(G,\Sigma) \le (F,\Gamma)$ which is a total master 
	condition for $N_\alpha$, has top level $N_\alpha \cap \omega_1$, and satisfies 
	that $\Sigma \subseteq N$ and 
	for any $\mathcal U \in \Sigma$, $G$ fulfills $h(\mathcal U)$.

	We claim that $(G,h[\Sigma])$ is an extension of $h((F,\Gamma)) = (F,h[\Gamma])$ 
	which is a total master condition for $N_\beta$. 
	The fact that $(G,h[\Sigma])$ is a condition extending $(F,h[\Gamma])$ is clear. 
	To show that $(G,h[\Sigma])$ is a total master condition for $N_\beta$, it suffices 
	to show that if $v \in N_\alpha \cap \p$ and $(G,\Sigma) \le v$, then 
	$(G,h[\Sigma]) \le h(v)$. 
	For then if $E \in N_\beta$ is a dense open subset of $\p$ 
	and $v \in h^{-1}(E) \cap N_\alpha$ 
	satisfies that $(G,\Sigma) \le v$, where such $v$ exists since $(G,\Sigma)$ is a 
	total master condition for $N_\alpha$, 
	then $h(v) \in E \cap N_\beta$ and 
	$(G,h[\Sigma]) \le h(v)$. 
	So let $v = (H,\Lambda)$ be in $N_\alpha \cap \p$ such that $(G,\Sigma) \le v$. 
	Then $H \subseteq G$ and $\Lambda \subseteq \Sigma$. 
	Therefore, $h(v) = (H,h[\Lambda])$, $H \subseteq G$, and 
	$h[\Lambda] \subseteq h[\Sigma]$, so $(G,h[\Sigma]) \le h(v)$.

	By Lemma 6.7, $(G,\Sigma \cup h[\Sigma])$ is a condition which extends both 
	$(G,\Sigma)$ and $(G,h[\Sigma])$, and hence is a total master condition for 
	both $N_\alpha$ and $N_\beta$ which extends 
	$(F,\Gamma)$ and $h(F,\Gamma)$. 
	Now consider $u \in N_\alpha \cap \p$ and $r \le (G,\Sigma \cup h[\Sigma])$. 
	We find $s \le r$ such that $s \le u$ iff $s \le h(u)$. 
	Consider the dense open set $D$ in $N_\alpha$ of conditions which are either below $u$ 
	or incompatible with $u$. 
	Fix $v \in D \cap N_\alpha$ such that $(G,\Sigma) \le v$. 
	By the previous paragraph, $(G,h[\Sigma]) \le h(v)$. 
	As $r \le (G,\Sigma \cup h[\Sigma]) \le (G,\Sigma), (G,h[\Sigma])$, 
	it follows that $r \le v, h(v)$. 
	Case 1: $v \le u$. 
	Then $h(v) \le h(u)$. 
	So $r \le u, h(u)$. 
	Case 2: $v$ is incompatible with $u$. 
	Then $h(v)$ is incompatible with $h(u)$. 
	Since $r \le v, h(v)$, $r$ is incompatible with $u$ and $h(u)$, and in particular, 
	$r$ does not extend $u$ or $h(u)$.
\end{proof}

\begin{definition}[{\cite[Chapter V, Definition 3.1]{properimproper}}]
	For any $\alpha < \omega_1$, a forcing poset $\q$ is 
	\emph{$\alpha$-proper} if for any large enough regular cardinal $\lambda$, 
	if $\langle N_i : i \le \alpha \rangle$ is an increasing 
	and continuous sequence of countable elementary substructures of 
	$H(\lambda)$ with $\q \in N_0$ and for all $\beta < \alpha$, 
	$\langle N_i : i \le \beta \rangle \in N_{\beta+1}$, 
	then for all $p \in N_0 \cap \p$, 
	there exists $q \le p$ such that $q$ is $(N_\beta,\q)$-generic for 
	all $\beta \le \alpha$. 
	A forcing poset $\q$ is \emph{$(< \! \omega_1)$-proper} if it is $\alpha$-proper 
	for all $\alpha < \omega_1$.
\end{definition}

\begin{proposition}
	The forcing poset $\p$ is $(< \! \omega_1)$-proper.
\end{proposition}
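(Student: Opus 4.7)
The plan is to prove $\alpha$-properness of $\p$ by induction on $\alpha < \omega_1$. To make the induction go through, I will prove a strengthened statement analogous to Theorem 6.5 that permits threading a finite set through the construction: for every tower $\langle N_i : i \le \alpha \rangle$ as in Definition 10.5 with $T, \p \in N_0$, every $(F,\Gamma) \in N_0 \cap \p$ with top level $\beta$, and every finite $X \subseteq T_{N_\alpha \cap \omega_1}$ with $\les$-to-one drop-downs to $\beta$ such that $F(x \res \beta) = 1$ for all $x \in X$, there exists $(G,\Sigma) \le (F,\Gamma)$ with top level $N_\alpha \cap \omega_1$ which is a total master condition for every $N_i$ with $i \le \alpha$ and satisfies $G(x) = 1$ for all $x \in X$.

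The base case $\alpha = 0$ is Theorem 6.5 itself. For $\alpha = \beta+1$, I apply the strong inductive hypothesis at $\beta$ inside $N_{\beta+1}$ (using that $\langle N_i : i \le \beta \rangle \in N_{\beta+1}$) with the finite set $X \res (N_\beta \cap \omega_1)$, which has $\les$-to-one drop-downs to $\beta$ by Lemma 4.5; this produces $(F',\Gamma') \in N_{\beta+1}$ extending $(F,\Gamma)$ with top level $N_\beta \cap \omega_1$, a total master condition for every $N_i$ with $i \le \beta$, and whose subtree function takes value $1$ on $X \res (N_\beta \cap \omega_1)$. Applying Theorem 6.5 to $(F',\Gamma')$, $N_{\beta+1}$, and $X$ (whose drop-down structure to the new top level follows from Lemma 4.5) yields $(G,\Sigma)$, which inherits the earlier total master condition properties by extension.

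The main obstacle is the limit case. Fix a strictly increasing sequence $\alpha_n \nearrow \alpha$ with $\alpha_0 = 0$. I build a descending sequence $q_n = (F_n,\Gamma_n) \in N_{\alpha_n+1} \cap \p$ with top level $N_{\alpha_n} \cap \omega_1$, each a total master condition for every $N_i$ with $i \le \alpha_n$, together with an increasing sequence of finite sets $Z_n \subseteq T_{N_\alpha \cap \omega_1}$ containing $X$ such that $Z_n$ has $\les$-to-one drop-downs to $N_{\alpha_n} \cap \omega_1$ and $F_n(z \res (N_{\alpha_n} \cap \omega_1)) = 1$ for $z \in Z_n$. At stage $n \to n+1$, since $q_n \in N_{\alpha_n+1}$ plays the role of the first-model condition for the re-indexed sub-tower $\langle N_{\alpha_n+1+j} : j \le \xi_n \rangle$, where $\xi_n < \alpha$ is the unique ordinal satisfying $\alpha_n+1+\xi_n = \alpha_{n+1}$, and this sub-tower belongs to $N_{\alpha_{n+1}+1}$, applying the strong inductive hypothesis inside $N_{\alpha_{n+1}+1}$ by elementarity produces $q_{n+1} \in N_{\alpha_{n+1}+1}$ extending $q_n$ that is a total master condition for every $N_i$ with $\alpha_n < i \le \alpha_{n+1}$, hence for all $i \le \alpha_{n+1}$ by extension from $q_n$.

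To assemble the limit condition $(G,\Sigma)$ with top level $N_\alpha \cap \omega_1$, I set $\Sigma = \bigcup_n \Gamma_n$ and define $G$ to extend $\bigcup_n F_n$ by taking value $1$ on $\bigcup_n Z_n$ and $0$ elsewhere on level $N_\alpha \cap \omega_1$. Bookkeeping interleaved with the inductive steps, addressing elements of $T \res (N_\alpha \cap \omega_1)$ and generalized promises entering $\Sigma$, in the style of the proof of Theorem 6.5, ensures that $G$ is an $\mathcal S$-splitting subtree function fulfilling every $\mathcal V \in \Sigma$. Finally, since $N_\alpha = \bigcup_n N_{\alpha_n}$, every dense open subset of $\p$ lying in $N_\alpha$ belongs to some $N_{\alpha_n}$ and is handled by $q_n$'s total master condition property for $N_{\alpha_n}$; as $(G,\Sigma) \le q_n$ for every $n$, it inherits this property and so is a total master condition for $N_\alpha$ as well.
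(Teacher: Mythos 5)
Your proposal is correct and follows essentially the same route as the paper: strengthen the statement to thread a finite subset of $T_{N_\alpha \cap \omega_1}$ through the induction, use Theorem 6.5 for the base case, combine the inductive hypothesis with Theorem 6.5 at successors, and at limits fuse countably many applications of the inductive hypothesis along a cofinal sequence with the bookkeeping scheme from the proof of Theorem 6.5. Your limit case is in fact spelled out in somewhat more detail (the re-indexed sub-towers and the union argument for dense sets in $N_\alpha$) than the paper's, which leaves those details to the reader.
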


\begin{proof}
	Let $\lambda$ be a large enough regular cardinal. 
	The goal is to prove by induction on $\alpha < \omega_1$ the following statement:
	Assuming
	\begin{itemize}
	\item $\langle N_i : i \le \alpha \rangle$ is an increasing and continuous sequence 
	of countable elementary substructures of $H(\lambda)$ with $\p \in N_0$;
	\item for all $\gamma < \alpha$, 
	$\langle N_i : i \le \gamma \rangle \in N_{\gamma+1}$;
	\item $(F,\Gamma) \in N_0 \cap \p$ has top level $\beta$;
	\item $X \subseteq T_{N_\alpha \cap \omega_1}$ is finite, $X$ has unique 
	drop-downs to $\beta$, and for all $x \in X$, $F(x \res \beta) = 1$;
	\end{itemize}
	then there exists $(G,\Sigma) \le (F,\Gamma)$ which is a total master condition 
	for $N_\gamma$ for all $\gamma \le \alpha$, has top level $N_\alpha \cap \omega_1$, 
	and satisfies that for all $x \in X$, $G(x) = 1$.
	
	The base case $\alpha = 0$ 
	is immediate from Theorem 6.5 (Existence of Total Master Conditions). 
	For the successor case, suppose that $\alpha = \gamma+1$ and the statement holds 
	for all ordinals less than $\alpha$. 
	Let $\langle N_i : i \le \gamma+1 \rangle$, $(F,\Gamma)$, $\beta$, and $X$ be as above. 
	By the inductive hypothesis applied to $X \res (N_\gamma \cap \omega_1)$, 
	we can find $(G,\Sigma) \le (F,\Gamma)$ which is a total 
	master condition for $N_\xi$ for all $\xi \le \gamma$, has top level 
	$N_\gamma \cap \omega_1$, and satisfies that for all $x \in X$, 
	$G(x \res (N_\gamma \cap \omega_1)) = 1$. 
	Now apply Theorem 6.5 (Existence of Total Master Conditions) to find 
	$(H,\Lambda) \le (G,\Sigma)$ which is a total master condition for 
	$N_{\gamma+1}$ with top level $N_{\gamma+1} \cap \omega_1$ 
	such that for all $x \in X$, $H(x) = 1$.
	
	Now assume that $\alpha$ is a limit ordinal and the result holds for all 
	$\gamma < \alpha$. 
	Let $\langle N_i : i \le \alpha \rangle$, $(F,\Gamma)$, $\beta$, and $X$ 
	be as above. 
	Fix an increasing sequence $\langle \alpha_n : n < \omega \rangle$ 
	of non-zero ordinals cofinal in $\alpha$. 
	Now repeat the argument of Theorem 6.5 (Existence of Total Master Conditions)
	to build a descending sequence of conditions 
	$\langle (F_n,\Gamma_n) : n < \omega \rangle$ in $N_\alpha$ below $(F,\Gamma)$, 
	with the following adjustment. 
	At a stage $n+1$ where we previously extended to get into a dense open set, 
	we instead apply the inductive hypothesis to extend to a condition with top level 
	$N_{\alpha_{n}} \cap \omega_1$ which is $(N_\gamma,\p)$-generic for all 
	$\gamma \le \alpha_n$. 
	We leave the details to the interested reader.
\end{proof}

In \cite[Chapter V, Section 5]{properimproper}, the notion of a 
\emph{simple $\omega_1$-completeness system $\mathbb D$} is defined, as well as 
the idea of a forcing poset $\q$ being \emph{$\mathbb D$-complete}. 
Define a forcing $\q$ to be \emph{Dee-complete} if there exists some 
simple $\omega_1$-completeness system $\mathbb D$ such that 
$\q$ is $\mathbb D$-complete.

\begin{thm}[{\cite[Chapter V, Theorem 7.1]{properimproper}}]
	Suppose that $\langle \p_i, \dot \q_j : i \le \alpha, \ j < \alpha \rangle$ 
	is a countable support forcing iteration such that for all $i < \alpha$, 
	$\p_i$ forces that $\dot \q_i$ is $(< \! \omega_1)$-proper and Dee-complete. 
	Then $\p_\alpha$ does not add reals.
\end{thm}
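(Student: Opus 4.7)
The plan is to prove this by induction on $\alpha$. The base case $\alpha = 0$ is trivial, and the successor case $\alpha = \beta + 1$ reduces to showing that a two-step iteration $\p_\beta * \dot{\q}_\beta$ adds no reals, where $\p_\beta$ adds no reals (by induction) and $\dot{\q}_\beta$ is forced to be Dee-complete and $(< \! \omega_1)$-proper. Any $\p_{\beta+1}$-name for a function $\omega \to 2$ is, in $V^{\p_\beta}$, a name for a real added by a Dee-complete forcing, and Dee-complete forcings add no reals directly from the definition of $\mathbb{D}$-completeness (descending sequences of conditions meeting any countably many dense sets chosen from a filter in $\mathbb{D}(N)$ admit lower bounds).

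For the limit case, the main work is when $\cf(\alpha) = \omega$. Fix $p \in \p_\alpha$, a $\p_\alpha$-name $\dot{r}$ for an element of $2^\omega$, and a cofinal sequence $\langle \alpha_n : n < \omega \rangle$ in $\alpha$. The goal is to find $p^* \le p$ deciding $\dot{r}$ completely. The plan is to build, inside a tower $\langle N_n : n < \omega \rangle$ of countable elementary substructures of a large $H(\lambda)$ with all relevant objects in $N_0$, a fusion sequence of conditions $\langle p_n : n < \omega \rangle$ with $p_n \in N_n \cap \p_\alpha$, $p_{n+1} \le p_n \le p$, $p_n$ deciding $\dot{r}(n)$, and $p_n \res \alpha_n$ being $(N_k, \p_{\alpha_n})$-generic for all $k \le n$. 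The existence of such generic extensions is supplied by a subsidiary induction yielding $(< \! \omega_1)$-properness of each $\p_{\alpha_n}$, in parallel with the main induction.

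The principal obstacle is assembling a lower bound $p^*$ of the $p_n$'s in $\p_\alpha$, and this is precisely where Dee-completeness is essential. Coordinate by coordinate at each $\beta < \alpha$, one obtains a decreasing sequence of $\p_\beta$-names for conditions in $\dot{\q}_\beta$, and must manufacture a $\p_\beta$-name for a lower bound without adding reals in the process. The construction of the $p_{n+1}$'s must therefore be threaded not arbitrarily but along dense sets coming from filters in the completeness system $\mathbb{D}(N_{n+1})$ applied at every coordinate $\beta \in N_{n+1}$ that has been activated so far; the bookkeeping must simultaneously arrange genericity over $N_0, \ldots, N_n$ at the iteration level and completeness-system coherence at every individual coordinate. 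Once this threading is achieved, Dee-completeness at each coordinate produces $p^*(\beta)$, and $p^* \le p_n$ for all $n$ forces $\dot{r}$ to be decided. The remaining case $\cf(\alpha) = \omega_1$ follows from this one, since $\p_\alpha$ is proper by Shelah's iteration theorem for $(< \! \omega_1)$-proper forcings, so every $\p_\alpha$-name for a real reflects to a $\p_\beta$-name for some $\beta < \alpha$ of countable cofinality to which the inductive hypothesis applies.
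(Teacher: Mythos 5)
First, note that the paper does not prove this statement at all: it is quoted verbatim from Shelah (\cite[Chapter V, Theorem 7.1]{properimproper}) and used as a black box, so there is no in-paper proof to compare against. Judged on its own terms, your outline has the right global architecture (induction on $\alpha$; successor case via the two-step iteration and the fact that a $\mathbb{D}$-complete forcing is totally proper; limit case of cofinality $\omega$ via fusion along a tower of models; reduction of the uncountable-cofinality case to bounded stages by properness), but the place you yourself identify as ``the principal obstacle'' is exactly where the proof lives, and your sketch does not actually get past it.

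Two concrete problems. First, your fusion requirements are inconsistent as stated: if $p_n \in N_n \cap \p_\alpha$ then $p_n \res \alpha_n \in N_n$, and an element of $N_n$ is in general not $(N_n,\p_{\alpha_n})$-generic. The standard remedy (used, e.g., in the paper's own Lemma 3.2 and the proof of Theorem 3.1) is to separate the $N$-internal descending sequence of (names for) conditions from an external sequence of generic conditions $q_n \notin N$; your proposal conflates the two. Second, and more seriously, the coordinatewise construction of the lower bound $p^*(\beta)$ is circular as described: whether $\langle p_n(\beta) : n<\omega\rangle$ is a generic sequence over $N_k[\dot G_\beta]$ lying in the appropriate ``large'' set of the filter $\mathbb{D}(N_k[\dot G_\beta],\dot\q_\beta,\ldots)$ is a statement about the extension by $\p_\beta$, and it must be forced by $p^* \res \beta$ --- which is itself being built from the very lower bounds it is supposed to certify. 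Breaking this circularity is precisely what the \emph{simplicity} of the completeness system is for: because $\mathbb{D}$ is definable over $H(\omega_1)$ from a real parameter and its filters are countably complete, the dense-set and largeness requirements at coordinate $\beta$ can be anticipated in the ground model and interleaved into the choice of $p_{n+1}$. Your proposal never invokes simplicity or $\omega_1$-completeness of the filters, and without them the phrase ``once this threading is achieved'' begs the question. $(<\!\omega_1)$-properness likewise enters essentially in handling towers of models inside the coordinatewise argument, not merely as a side remark. So the proposal is a correct table of contents for Shelah's proof rather than a proof; the decisive lemma is missing.
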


\begin{proposition}
	The forcing poset $\p$ is Dee-complete.
\end{proposition}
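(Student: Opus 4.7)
The plan is to define a simple $\omega_1$-completeness system $\mathbb D$ in the sense of \cite[Chapter V, Section 5]{properimproper} and verify that $\p$ is $\mathbb D$-complete. The argument will parallel Shelah's proof that his promise-based specializing forcing is Dee-complete \cite[Chapter V, Section 6]{properimproper}, adapted to the generalized promises of Section 4.

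For each countable $M \prec H(\aleph_1)$ containing the relevant parameters and each $p \in M \cap \p$, I will let $\mathbb D(M, p)$ consist of sets $A_{\mathcal E}$ indexed by finite tuples $\mathcal E \subseteq M$ of ``tasks'' drawn from a fixed list: dense open subsets of $\p$, elements of $T \restriction (M \cap \omega_1)$, generalized promises on $T$ together with elements of their levels, and finite subsets of $T_{M \cap \omega_1}$. Each $A_{\mathcal E}$ will be the set of descending $\omega$-sequences $\langle q_n : n < \omega \rangle$ of conditions in $M$ below $p$ which handle each task in $\mathcal E$ at some stage, exactly as in the corresponding Case 1, 2, or 3 of the construction in the proof of Theorem 6.5. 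Simplicity of $\mathbb D$ will be immediate from the uniform first-order definition of the family from $(M,p)$.

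It then suffices to verify two points. First, any countable intersection $\bigcap_k A_{\mathcal E_k}$ is non-empty: this is precisely the bookkeeping already done inside the proof of Theorem 6.5, where one enumerates the tasks in $\bigcup_k \mathcal E_k$ and uses Proposition 6.4 (Consistent Extensions Into Dense Sets) at each stage to produce a single descending sequence in $M$ handling them all. Second, given any $\vec q = \langle q_n : n < \omega \rangle$ with $q_n = (F_n,\Gamma_n)$ lying in the intersection of enough such $A_{\mathcal E}$'s (one task per relevant object in $N \cap H(\aleph_1)$), I will construct a lower bound $(G,\Sigma)$ by the same prescription as the total master condition in Theorem 6.5: let $\Sigma = \bigcup_n \Gamma_n$, and let $G$ extend $\bigcup_n F_n$ by assigning value $1$ on level $N \cap \omega_1$ precisely to the elements accumulated by the Case 1 handling.

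The main obstacle is arranging the task family to be rich enough that the resulting $(G,\Sigma)$ is guaranteed to be a genuine condition, that is, that $G$ is $\mathcal S$-splitting and fulfills every $\mathcal U \in \Sigma$. But including tasks for immediate-successor extensions at each $y \in T \restriction (M \cap \omega_1)$ (Case 1) and for each generalized promise together with each level element and each forbidden finite set (Case 3) forces exactly the verifications already carried out in Theorem 6.5, so nothing new is required beyond organizing these verifications into the indexing family $\mathcal E$.
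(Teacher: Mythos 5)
The paper does not actually give a proof of this proposition; it only points to Shelah's argument for the promise-based specializing forcing, so your outline must be judged on its own. It follows the right general strategy, but two points need repair before it is a proof. First, the indexing: a \emph{simple} $\omega_1$-completeness system must have the members of $\mathbb D(M,p)$ indexed by subsets of $M$, interpreted by one fixed first-order formula over $(M,\in,\p,p)$. Several of your tasks --- finite subsets of $T_{M\cap\omega_1}$ and the sets $\mathcal Y\in\mathcal U(M\cap\omega_1)$ --- are neither elements nor subsets of $M$, since $T_{M\cap\omega_1}\cap M=\emptyset$. They have to be coded inside $M$ by their traces, e.g.\ $c\in T_{M\cap\omega_1}$ by the branch $\{c\res\gamma:\gamma<M\cap\omega_1\}\subseteq M$ and $\mathcal Y$ by $\langle\mathcal Y\res\gamma:\gamma<M\cap\omega_1\rangle$. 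This coding is where the simplicity of $\mathbb D$ actually lives; asserting ``$\mathcal E\subseteq M$'' for these objects is false as stated.

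Second, and more seriously, the lower-bound step does not work as written. An arbitrary generic filter (or descending sequence) in the intersection of your sets carries no record of ``the Case 1 handling'': the sets $Z_n$ of Theorem 6.5 consist of elements of $T_{N\cap\omega_1}$, are not part of the conditions $(F_n,\Gamma_n)$, and are not recoverable from the sequence. Moreover, even for the sequence built in Theorem 6.5 the top level of the master condition is $\bigcup_n Z_n$, which includes the tuples $\vec d^{\,+},\vec e^{\,+}$ added in Case 3; assigning value $1$ \emph{precisely} to the Case 1 elements would yield a function that fails to fulfill the generalized promises in $\Sigma$. The repair is to phrase each $A$ as a property of the filter alone: for instance, ``every $y$ with $F_G(y)=1$ lies on a cofinal branch of $T\res(N\cap\omega_1)$ on which $F_G$ is constantly $1$ and which is coded in the index as having an upper bound in $T_{N\cap\omega_1}$,'' and ``for each coded $\mathcal Y\in\mathcal U(N\cap\omega_1)$ and coded finite $t\subseteq T_{N\cap\omega_1}$, if $\mathcal U\in\bigcup_n\Gamma_n$ then some tuple of $\mathcal Y$ avoiding $t$ has all coordinate branches constantly $1$.'' One then takes the top level of the lower bound to be the set of \emph{all} $c\in T_{N\cap\omega_1}$ whose branch receives value $1$ everywhere; since $N\cap\omega_1$ is a limit, this yields conditions (1)--(3) of Definition 5.1, the $\mathcal S$-splitting requirement (which only concerns levels below the top), and fulfillment. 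With these repairs your plan does go through, and the countable-intersection property of $\mathbb D$ is indeed exactly the construction of Theorem 6.5.
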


We omit the proof because it is essentially identical to the proof 
of \cite[Chapter V, Section 6]{properimproper} that the forcing for specializing 
an Aronszan tree with promises is Dee-complete 
(and that proof itself is quite trivial). 
We leave the matter to be explored by the interested reader.

\section{Consistency Results}

In this section, we complete the main consistency results of the article.

\begin{lemma}
	Let $2 \le n < \omega$. 
	Suppose that every normal infinitely splitting Aronszajn tree 
	contains an uncountable downwards closed $n$-splitting subtree. 
	Then every Aronszajn tree contains an uncountable downwards closed 
	$(\le \! n)$-splitting subtree.
\end{lemma}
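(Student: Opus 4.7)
The plan is to reduce the problem to an application of the hypothesis to a normal infinitely splitting Aronszajn tree extracted from $A$, and then to project the resulting $n$-splitting subtree back down to a $(\le \! n)$-splitting subtree of $A$. First I would reduce to the case where $A$ is Hausdorff by passing to the Hausdorffification $A'$ (the quotient of $A$ identifying elements at each limit level that share the same set of $A$-predecessors). Standard verifications show $A'$ is a Hausdorff Aronszajn tree and the quotient map $\pi : A \to A'$ is injective on successor levels. Any uncountable downwards closed $(\le \! n)$-splitting subtree $V' \subseteq A'$ then pulls back to $V = \pi^{-1}(V')$, which is uncountable (since $\pi$ has countable fibers), downwards closed, and $(\le \! n)$-splitting: at the successor level $\h_A(x)+1$ the map $\pi$ is injective, so the immediate successors of $x$ in $V$ inject into those of $\pi(x)$ in $V'$.

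Assuming now $A$ is Hausdorff Aronszajn, I would set $U = \{ x \in A : |\{ y \in A : x \le_A y \}| = \aleph_1 \}$. Standard counting arguments (using countability of the levels of $A$) show $U$ is an uncountable downwards closed subtree of $A$ in which every element has uncountably many elements of $U$ above it; since Aronszajn trees contain no uncountable chain, every element of $U$ also has incomparable elements of $U$ above. Together with Hausdorffness inherited from $A$, this makes $U$ a normal Aronszajn tree. By Lemma 1.1(b) there is a club $C \subseteq \omega_1$ such that $U \res C$ is infinitely splitting; shrinking $C$ to $\{ 0 \} \cup \acc(C)$ also ensures that the reindexing $U^*$ of $U \res C$ by the order-isomorphism $C \to \omega_1$ is Hausdorff. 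Then $U^*$ is a normal infinitely splitting Aronszajn tree, so by hypothesis it contains an uncountable downwards closed $n$-splitting subtree $V^*$, which corresponds via the reindexing to an uncountable $V \subseteq U \res C \subseteq A$.

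Put $V_A = \{ x \in A : \exists y \in V \ x \le_A y \}$, the downward closure of $V$ in $A$. Uncountability and downwards closure of $V_A$ are immediate. For the splitting bound, consider $x \in V_A \cap A_\beta$ and let $\beta^* = \max(C \cap (\beta+1))$, with corresponding $U^*$-index $\alpha^*$ so that $C(\alpha^*) = \beta^*$, and let $\gamma = C(\alpha^* + 1)$. Since $V^*$ is downwards closed in $U^*$, we have $x \res \beta^* \in V$, and the $n$-splitting of $V^*$ at level $\alpha^*$ says $x \res \beta^*$ has exactly $n$ elements of $V$ at $A$-height $\gamma$ above it. The immediate successors of $x$ in $V_A$ at height $\beta+1$ are precisely the distinct values $y \res (\beta+1)$ for $y$ ranging over those $V$-descendants of $x$ at height $\gamma$ that lie above $x$; these inject into the $n$ descendants of $x \res \beta^*$ at height $\gamma$, so $x$ has at most $n$ immediate successors in $V_A$.

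The main obstacle is the final counting step, particularly the subcase $\beta \notin C$: the passage from $V$ (which lives only at heights in $C$) to its $A$-downward closure $V_A$ must not introduce extra immediate successors. The key point is that when $\beta \notin C$, one has $\beta^* < \beta < \gamma$ with $\beta^*,\gamma$ consecutive in $C$, and every $V$-descendant of $x$ factors through the unique $V$-ancestor $x \res \beta^*$; hence the single $n$-splitting step from $U^*$-level $\alpha^*$ to $\alpha^*+1$ caps all of the $V$-descendants of $x$ at height $\gamma$, which in turn caps the number of immediate successors of $x$ in $V_A$ at height $\beta+1$ after projection.
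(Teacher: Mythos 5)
Your proposal is correct and follows essentially the same route as the paper's proof: reduce to a Hausdorff tree, pass to the downwards closed set $U$ of nodes with uncountably many successors, restrict to a club to obtain a normal infinitely splitting Aronszajn tree, apply the hypothesis, and take the downward closure in $A$, with the $(\le n)$-splitting bound coming from the observation that all levels strictly between consecutive club levels are controlled by the single $n$-splitting step of $V^*$. The only (trivial) omission is that before claiming $U$ is normal and invoking Lemma 1.1(b) you should first restrict to $A_x$ for some $x$ of height $0$ so that the tree has a root, as the paper does explicitly.
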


\begin{proof}
	By Lemma 1.1(a), the assumption implies that every normal infinitely splitting Aronszajn 
	tree contains an uncountable downwards closed $n$-splitting normal subtree. 
	Let $T$ be an Aronszajn tree.
	We may assume that $T$ has a root, for otherwise we can restrict our attention to 
	$T_x$ for some $x$ of height $0$. 
	If $T$ is not Hausdorff, then by a standard shifting argument 
	find a Hausdorff tree $T^+$ 
	such that $T^+$ is equal to $T$ restricted to non-limit levels. 
	Note that if $T^+$ has an uncountable downwards closed $\lem$-splitting subtree, 
	then so does $T$. 
	So without loss of generality assume that $T = T^+$. 
	By Lemma 1.1(a), $T$ contains an uncountable downwards closed subtree $U$ such that 
	every element of $U$ has uncountably many elements of $U$ above it. 
	So again let us just assume that $T = U$. 
	Then $T$ is normal. 
	So by Lemma 1.1(b), there exists a club $C \subseteq \omega_1$ such that $T \res C$ 
	is infinitely splitting. 
	Note that $T \res C$ is normal. 
	Find an uncountable downwards closed normal subtree $W \subseteq T \res C$ 
	which is $m$-splitting. 
	By the normality of $W$, 
	it is easy to check that the downwards closure of $W$ in $T$ is 
	$(\le \! m)$-splitting, and we are done.
\end{proof}

\begin{thm}
	It is consistent that \textsf{CH} holds 
	and for all $2 \le n < \omega$, 
	every $(\ge \! n)$-splitting normal Aronszajn tree contains an 
	uncountable downwards closed 
	$n$-splitting normal subtree. 
	In particular, it is consistent that \textsf{CH} holds and 
	there do not exist any Lindel\"{o}f trees.
\end{thm}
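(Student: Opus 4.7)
The plan is to perform a countable support forcing iteration
$\langle \p_i, \dot \q_j : i \le \omega_2,\ j < \omega_2 \rangle$
over a ground model $V$ satisfying \textsf{GCH}.
At each stage $j < \omega_2$, a bookkeeping function supplies a
$\p_j$-name $(\dot T_j, n_j)$ with $n_j \in \omega \setminus 2$
and $\dot T_j$ forced to be a normal $(\ge\!n_j)$-splitting Aronszajn tree
in $V^{\p_j}$, and $\dot \q_j$ is set to be (a name for) $\p(\dot T_j, \{n_j\})$.
Using a $\diamondsuit_{\omega_2}(S^{\omega_2}_{\omega_1})$-style sequence,
which is available in $V$ by \textsf{GCH},
the bookkeeping is arranged so that every pair $(T, n)$ with $T$ a normal
$(\ge\!n)$-splitting Aronszajn tree appearing in some intermediate model
is eventually caught at some later stage.

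First I would verify that the iteration is well-behaved.
Each iterand is $(<\!\omega_1)$-proper by Proposition 10.5, Dee-complete
by Proposition 10.8, and satisfies the $\omega_2$-p.i.c.\ by Proposition 10.4.
By Theorem 10.7, $\p_{\omega_2}$ adds no reals,
so \textsf{CH} is preserved throughout and holds in $V[G_{\omega_2}]$.
Standard preservation of properness for countable support iterations
implies that each $\p_j$ is proper,
and by Theorem 10.3, $\p_{\omega_2}$ is $\omega_2$-c.c.\ assuming \textsf{CH}.
Combining Corollary 7.2 with Theorem 3.1 applied with $n = 2$,
no Aronszajn tree in an intermediate model $V[G_j]$ ever acquires a
cofinal branch at a later stage.

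To verify the main conclusion in $V[G_{\omega_2}]$, let $T$ be a normal
$(\ge\!n)$-splitting Aronszajn tree for some $n \ge 2$.
Since $|T| = \omega_1$ and $\p_{\omega_2}$ is $\omega_2$-c.c.,
there is $j_0 < \omega_2$ with $T \in V[G_{j_0}]$.
Being normal and $(\ge\!n)$-splitting is absolute once the tree structure is fixed,
and being Aronszajn is preserved by the previous paragraph, so $T$ is already a
normal $(\ge\!n)$-splitting Aronszajn tree in $V[G_{j_0}]$.
The bookkeeping then delivers some $k \ge j_0$ at which we force with
$\p(T, \{n\})$, producing an uncountable downwards closed normal
$n$-splitting subtree $U \subseteq T$ in $V[G_{k+1}]$.
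Since no subsequent forcing adds new elements to $T$ or alters its order,
$U$ remains such a subtree in $V[G_{\omega_2}]$.

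The main obstacle is the bookkeeping itself:
one must diagonalize over all $\p_j$-names for normal $(\ge\!n)$-splitting
Aronszajn trees across all stages $j < \omega_2$, which is a standard but delicate
construction for CS iterations of length $\omega_2$ over a \textsf{GCH} ground model.
For the Lindel\"{o}f consequence, specialize to $n = 2$:
every normal Aronszajn tree contains a normal $2$-splitting subtree,
so by Lemma 11.1 every Aronszajn tree contains an uncountable downwards closed
$(\le\!2)$-splitting subtree.
By Theorem 2.7, every Aronszajn tree is then non-Lindel\"{o}f;
combined with \cite[Lemma 4.2]{marun} (trees with cofinal branches are non-Lindel\"{o}f),
this yields that no $\omega_1$-tree in $V[G_{\omega_2}]$ is Lindel\"{o}f.
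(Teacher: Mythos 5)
Your proposal is correct and follows essentially the same route as the paper: a countable support iteration of the posets $\p(T,\{n\})$ of length $\omega_2$ over a \textsf{GCH} model, with bookkeeping justified by the $\omega_2$-p.i.c.\ (Proposition 10.4, Theorems 10.2, 10.3), preservation of \textsf{CH} via Dee-completeness and $(<\!\omega_1)$-properness (Propositions 10.6, 10.8, Theorem 10.7), and the Lindel\"{o}f consequence via Lemma 11.1 and Theorem 2.7. The only cosmetic differences are your $\diamondsuit$-style bookkeeping device (the paper just cites the $\omega_2$-c.c.\ and $|\p_i| = \omega_2$) and your appeal to Aronszajn-preservation at stage $j_0$, which is not actually needed in that direction since a cofinal branch present at an intermediate stage would persist to the final model.
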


\begin{proof}
	Start with a model satisfying \textsf{GCH}. 
	Define a countable support forcing iteration 
	$\langle \p_i, \dot \q_j : i \le \omega_2, \ j < \omega_2 \rangle$ 
	where we bookkeep 
	so that for every $2 \le n < \omega$ and every normal 
	$(\ge \! n)$-splitting normal Aronszajn tree $T$ in the final model, 
	there exists some $i < \omega_2$ such that 
	$\Vdash_{i} \dot \q_i = \p(T,\{n\})$. 
	By Proposition 10.4 and Theorems 10.2 and 10.3, 
	this bookkeeping is possible since each $\p_i$ has size $\omega_2$ and is $\omega_2$-c.c. 
	By Propositions 10.6 and 10.8 and Theorem 10.7, 
	$\p_{\omega_2}$ does not add reals, and in particular, 
	preserves $\omega_1$ and \textsf{CH}. 
	We claim that in $V^{\p_{\omega_2}}$ there do not exist any Lindel\"{o}f trees. 
	Start with an $\omega_1$-tree $T$. 
	If $T$ is not Aronszajn, then it is not Lindel\"{o}f by \cite[Lemma 4.2]{marun}. 
	So assume that $T$ is Aronszajn. 
	By Lemma 11.1, $T$ contains an uncountable downwards closed 
	$(\le \! 2)$-splitting subtree. 
	By Theorem 2.7, $T$ is not Lindel\"{o}f.
\end{proof}

Now we turn to consistency results involving the preservation of Suslin trees. 
We make use of the following forcing iteration theorem.

\begin{thm}[{\cite{AS2}}, {\cite{MIYAMOTO}}]
	Let $S$ be a Suslin tree. 
	Then the property of a forcing poset being proper and forcing that 
	$S$ is Suslin is preserved by any countable support forcing iteration.
\end{thm}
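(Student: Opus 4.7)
The plan is to adapt the proof of Theorem 3.1 to the Suslin-preservation setting, following the pattern of Shelah's iteration preservation arguments. I would induct on the length $\alpha$ of the iteration $\langle \p_i, \dot{\q}_j : i \le \alpha, j < \alpha \rangle$, where without loss of generality each $\p_\gamma$ is separative. The cases $\alpha = 0$ and $\alpha$ successor are routine: the successor case reduces to the standard two-step preservation that if $\p_\beta$ is proper and forces $S$ Suslin and if $\dot{\q}_\beta$ is forced to be proper and preserve $S$ Suslin, then $\p_{\beta+1} \cong \p_\beta * \dot{\q}_\beta$ is proper and forces $S$ Suslin.

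For limit $\alpha$, suppose $p \in \p_\alpha$ forces that $\dot{A}$ is a maximal antichain in $S$. The goal is to find $q \le_\alpha p$ and some $\delta < \omega_1$ with $q \Vdash_\alpha \dot{A} \subseteq S \res \delta$. Fix a countable $N \prec H(\lambda)$ with $\langle \p_i, \dot{\q}_j : i \le \alpha, j < \alpha \rangle, p, \dot{A}, S \in N$, set $\delta := N \cap \omega_1$, and enumerate $S_\delta$ as $\langle s_m : m < \omega \rangle$, the dense open subsets of $\p_\alpha$ in $N$ as $\langle D_m : m < \omega \rangle$, and an increasing sequence $\langle \alpha_m : m < \omega \rangle \subseteq N \cap \alpha$ cofinal in $\sup(N \cap \alpha)$.

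The central ingredient is the analogue of the internal Claim of Theorem 3.1: given $p^* \le_\alpha p$, a dense open $D \in N$ of $\p_\alpha$, and $\alpha_0 \in N \cap \alpha$, the forcing $\p_{\alpha_0}$ forces that whenever $p^* \res \alpha_0 \in \dot{G}_{\alpha_0}$ there exists $\gamma < \omega_1$ such that for every $s \in S_\gamma$ one can choose $q_s \le_\alpha p^*$ in $D$ with $q_s \res \alpha_0 \in \dot{G}_{\alpha_0}$ and $q_s \Vdash^V_\alpha (\exists t \le_S s)(t \in \dot{A})$. To prove this I would work in $V[G_{\alpha_0}]$, where by the inductive hypothesis (applied to the tail $\p_\alpha / G_{\alpha_0}$, which via a standard factoring is itself a countable support iteration of proper forcings preserving $S$ Suslin) $\dot{A}$ is forced to be a countable maximal antichain of $S$. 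Any $u \le_\alpha p^*$ in $D$ with $u \res \alpha_0 \in G_{\alpha_0}$ then has an extension bounding $\dot{A}$ below some $\gamma < \omega_1$, and the maximality of $\dot{A}$ supplies, for each $s \in S_\gamma$, a condition $q_s$ forcing an element of $\dot{A}$ below $s$.

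Granted the Claim, I build $\langle \dot{p}_m, q_m \rangle_{m < \omega}$ exactly as in Theorem 3.1 using Lemma 3.2, arranging in addition that at stage $m+1$ the name $\dot{p}_{m+1}$ is forced by $q_{m+1}$ to lie in $D_m \cap N$, to extend $\dot{p}_m$, and to satisfy $\dot{p}_{m+1} \Vdash^V_\alpha (\exists t \le_S s_m)(t \in \dot{A})$. Taking $q := \bigcup_m q_m$ then gives an $(N, \p_\alpha)$-generic condition below $p$ which forces that every $s \in S_\delta$ lies above some element of $\dot{A}$; since $\dot{A}$ is an antichain, this forces $\dot{A} \subseteq S \res (\delta + 1)$, hence $\dot{A}$ is countable and $S$ remains Suslin. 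The main obstacle is the Claim, specifically the subtlety of verifying that the quotient iteration $\p_\alpha / \dot{G}_{\alpha_0}$ falls within the scope of the inductive hypothesis so that its Suslin-preservation can be invoked inside $V[G_{\alpha_0}]$; this factoring is precisely the delicate point where the finer arguments of the original Abraham-Shelah and Miyamoto proofs enter.
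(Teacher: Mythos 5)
The paper itself supplies no proof of this theorem; it is quoted from Abraham--Shelah and Miyamoto, so the only internal benchmark is the proof of Theorem 3.1, which you correctly identify as the template. Your overall architecture is the right one: induction on length with separative iterations, a Claim playing the role of the Claim in Theorem 3.1, fusion via Lemma 3.2, and the endgame in which $q$ forces every $s\in S_{N\cap\omega_1}$ to lie above an element of $\dot A$, whence $\dot A\subseteq S\res(N\cap\omega_1+1)$ because $\dot A$ is an antichain. That assembly is sound (the small step you gloss over --- applying the Claim to $s_m\res\gamma$ and using that ``$(\exists t\le_S s)(t\in\dot A)$'' persists upward in $s$, the analogue of the downward closure of $\dot U$ in Theorem 3.1 --- is harmless).

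The genuine gap is in your proof of the Claim, where you invoke the inductive hypothesis on the tail $\p_\alpha/\dot G_{\alpha_0}$, ``which via a standard factoring is itself a countable support iteration of proper forcings preserving $S$ Suslin.'' You flag this as the delicate point, and it is worse than delicate: the quotient of a countable support iteration is not literally a countable support iteration of the named iterands over $V[G_{\alpha_0}]$, and arranging that its iterands preserve the Suslinity of $S$ over the intermediate models is essentially the theorem being proved; this route imports the hard content of the cited proofs rather than reproducing it. The whole point of the Theorem 3.1 template is that the quotient is never treated as an iteration --- only the inductive hypothesis on the \emph{initial segment} $\p_{\alpha_0}$ is used. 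The repair is direct: in $V[G_{\alpha_0}]$, with $u\le_\alpha r^*$ in $D$ and $u\in\p_\alpha/G_{\alpha_0}$, set
$$
B=\{\, s\in S : \text{no } q\le u \text{ in } \p_\alpha/G_{\alpha_0} \text{ forces } (\exists t\le_S s)(t\in\dot A)\,\}.
$$
Then $B$ is downwards closed and its complement is upwards closed. If the complement is dense, its minimal elements form a maximal antichain, which is countable because $S$ is Suslin in $V[G_{\alpha_0}]$ by the inductive hypothesis on $\p_{\alpha_0}$, and hence $B$ is countable. Otherwise some $x$ satisfies $S_x\subseteq B$; but $u$ forces $\dot A$ to be maximal, so some $q\le u$ decides a $t_0\in S$ with $t_0\in\dot A$ comparable with $x$, and whether $t_0\le_S x$ or $x<_S t_0$, the condition $q$ witnesses that $x\notin B$ or $t_0\notin B$, a contradiction. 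With $B$ countable, fix $\gamma$ with $B\subseteq S\res\gamma$ and pull each $q_s$ back to $V$ exactly as in the paper's Claim (choosing $v_s\in G_{\alpha_0}$ forcing the relevant statements and using that $D$ is open). With the Claim proved this way, the rest of your proposal goes through unchanged.
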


\begin{thm}[\cite{lindstrom}]
	Let $T$ be an Aronszajn tree which does not contain a Suslin subtree. 
	Then for any Suslin tree $S$, $\Vdash_S \text{$T$ is Aronszajn}$.
\end{thm}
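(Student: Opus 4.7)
The plan is to argue by contradiction. Suppose $S$ is Suslin and some condition in $S$ forces that $\dot b$ is a cofinal branch of $T$; by passing below that condition we may as well assume the maximum condition forces this. I would then define
$$
U = \{t \in T : \exists s \in S,\ s \Vdash_S t \in \dot b\}.
$$
First I would verify that $U$ is downward closed in $T$: if $s \Vdash t \in \dot b$ and $t' <_T t$, then $s$ forces $t' <_T t \in \dot b$, and since $\dot b$ is forced to be a chain closed under predecessors, $s \Vdash t' \in \dot b$, so $t' \in U$. Next I would observe that $U$ has height $\omega_1$: for each $\alpha < \omega_1$, some $s \in S$ decides $\dot b(\alpha) = t$ for some $t \in T_\alpha$, putting such a $t$ into $U$. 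Since the levels of $U$ are countable, inherited from $T$, it follows that $U$ is an uncountable downward closed subtree of $T$.

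By the hypothesis on $T$, the subtree $U$ is not Suslin. Since $U \subseteq T$ and $T$ is Aronszajn, $U$ has no cofinal branch, so the failure of Suslinness must come from an uncountable antichain $X \subseteq U$.

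For each $x \in X$, I would choose some $s_x \in S$ with $s_x \Vdash x \in \dot b$, and then argue that the set $\{s_x : x \in X\}$ is an uncountable antichain in $S$, contradicting that $S$ is Suslin. The key point is this: if $x \neq y$ in $X$ and $s_x, s_y$ are compatible, with a common extension $s$, then $s \Vdash x, y \in \dot b$, so $s$ forces $x$ and $y$ to be comparable in $T$. But comparability in the ground-model tree $T$ is absolute between $V$ and $V^S$, so $x$ and $y$ would be comparable in $V$, contradicting that $X$ is an antichain. This same argument shows the map $x \mapsto s_x$ is injective, so $\{s_x : x \in X\}$ is genuinely uncountable.

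I expect the proof to be short, and the only subtlety is making the transfer from an antichain of $U$ to an antichain of $S$ cleanly, which rests on the absoluteness of comparability in $T$ between $V$ and the $S$-generic extension. No finer tree-combinatorial machinery should be required.
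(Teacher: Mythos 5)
Your argument is correct. The one point worth double-checking is the claim that the failure of Suslinness of $U$ must be witnessed by an uncountable antichain: this needs $U$ to have height $\omega_1$ and no uncountable chain, both of which you do establish (the former from cofinality of $\dot b$, the latter because $U\subseteq T$ and $T$ is Aronszajn), so the case split is legitimate. The paper disposes of this theorem with a two-sentence sketch that runs in the opposite direction: from a condition $x$ forcing a cofinal branch of $T$, it extracts a club $C\subseteq\omega_1$ and a strictly increasing map of $S_x\res C$ into $T$ (using the countable distributivity of $S$ to get each node on a club level to decide an initial segment of $\dot b$), and then observes that the image of this map is a Suslin subtree of $T$, contradicting the hypothesis directly. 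Your route replaces the construction of that embedding by the downward-closed set $U$ of possible values of $\dot b$, uses the hypothesis on $T$ to conclude that $U$ is not Suslin and hence has an uncountable antichain, and then pulls that antichain back to $S$ via $x\mapsto s_x$ and absoluteness of comparability. The two proofs use the same underlying combinatorics (the name $\dot b$ transports antichains between $T$ and $S$), but yours avoids the club/distributivity argument needed to build the embedding, at the cost of the extra case analysis on why $U$ fails to be Suslin; the paper's version gives slightly more information, namely an explicit Suslin subtree of $T$ isomorphic to a club restriction of $S_x$. Either way the result follows, so your proposal stands as a valid, mildly more elementary alternative.
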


Namely, if $x \in S$ forces that $T$ is not Aronszajn, then there 
exists a club $C \subseteq \omega_1$ and a strictly increasing function 
of $S_x \res C$ into $T$. 
The image of this function is a Suslin subtree of $T$.

\begin{lemma}
	Let $2 \le m < \omega$. 
	Suppose that every normal infinitely splitting Aronszajn tree 
	which does not contain a Suslin subtree 
	contains an uncountable downwards closed $m$-splitting subtree. 
	Then every Aronszajn tree which does not contain a Suslin subtree 
	contains an uncountable downwards closed 
	$(\le \! m)$-splitting subtree.
\end{lemma}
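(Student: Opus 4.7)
The plan is to repeat the proof of Lemma 11.1 essentially verbatim, while checking at each reduction step that the hypothesis ``contains no Suslin subtree'' is preserved. The fundamental observation is that every intermediate subtree produced in the course of that proof is a subset of $T$ carrying the induced tree order, so any Suslin subtree of such an intermediate tree would automatically be a Suslin subtree of $T$. Hence the hypothesis passes down through every reduction, which is exactly what allows us to invoke the assumption of Lemma 11.4 at the appropriate moment.

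First, given an Aronszajn tree $T$ with no Suslin subtree, I would normalize exactly as in the proof of Lemma 11.1: pass to $T_x$ for some $x$ of height zero to arrange that $T$ has a root; apply the standard shifting argument restricting to non-limit levels to arrange that $T$ is Hausdorff; and apply Lemma 1.1(a) to prune to a downwards closed subtree in which every element has uncountably many elements above it. By the observation above, each of these operations yields a tree with no Suslin subtree, so the result is a normal Aronszajn tree with no Suslin subtree. Next, by Lemma 1.1(b) there is a club $C \subseteq \omega_1$ with $T \res C$ infinitely splitting. A routine check shows that $T \res C$ is normal and Aronszajn, and since $T \res C$ is a subset of $T$ under the induced order, it too contains no Suslin subtree.

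At this stage the hypothesis of the lemma applies to $T \res C$, furnishing an uncountable downwards closed $m$-splitting subtree $W \subseteq T \res C$. Applying Lemma 1.1(a) to $W$ if necessary, I may assume that every element of $W$ has uncountably many elements of $W$ above it; this pruning may drop $W$ from $m$-splitting down to merely $(\le \! m)$-splitting, but that is enough for our purposes. Exactly as in the proof of Lemma 11.1, the downwards closure of $W$ in $T$ is then an uncountable downwards closed $(\le \! m)$-splitting subtree of $T$, completing the proof. There is no genuine obstacle beyond the verification that each reduction preserves ``no Suslin subtree''; but since every reduction sends $T$ to a subset of $T$ under the induced order, this verification is immediate in each case.
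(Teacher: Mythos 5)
Your proof is correct and follows essentially the same route as the paper, whose entire proof of this lemma is the remark that one repeats the proof of Lemma 11.1 ``using the fact that the downward closure of a Suslin subtree is Suslin.'' Your justification that each reduction preserves the absence of Suslin subtrees --- namely that each intermediate tree is a subset of $T$ with the induced order, so any Suslin subtree of it is already a Suslin subtree of $T$ --- is an equally valid and, given that the paper's subtrees are arbitrary subsets, slightly more direct substitute for the paper's downward-closure observation.
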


The proof is the same as for Lemma 11.1, using the fact that the downward closure 
of a Suslin subtree is Suslin.

\begin{thm}
	It is consistent that \textsf{CH} holds, there exists a Suslin tree, 
	and for every $\omega_1$-tree $T$, $T$ either contains a Suslin subtree 
	or is non-Lindel\"{o}f.
\end{thm}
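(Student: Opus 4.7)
The plan is to combine the iteration strategy behind Theorem 11.2 with the Suslin-tree preservation machinery of Section 8. Starting in a model of $\textsf{GCH}+\Diamond$ containing a normal infinitely splitting Suslin tree $S$, I will carry out a countable support iteration $\langle \p_i,\dot\q_j : i\le\omega_2,\ j<\omega_2\rangle$ with bookkeeping so that for every $\alpha<\omega_2$ and every $\p_\alpha$-name $\dot T$ for a normal infinitely splitting Aronszajn tree in $V[G_\alpha]$ which does not contain a Suslin subtree there, some later $\dot\q_\beta$ is forced to equal $\p(\dot T,\{2\})$; otherwise $\dot\q_\beta$ is trivial. Propositions 10.4, 10.6, and 10.8 together with Theorems 10.2, 10.3, and 10.7 then yield that the iteration is proper, $\omega_2$-c.c., and adds no reals, so $\textsf{CH}$ is preserved and every $\omega_1$-tree in $V[G_{\omega_2}]$ is already in some $V[G_\alpha]$ with $\alpha<\omega_2$; this in turn lets the bookkeeping exhaust all relevant pairs.

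The crux of the argument is an induction on $\beta\le\omega_2$ verifying that any Suslin tree $U$ appearing at some earlier stage $V[G_\alpha]$ remains Suslin in $V[G_\beta]$. The successor step is handled by Theorem 8.1 and the limit step by Theorem 11.3, provided each non-trivial iterand $\p(\dot T_\gamma,\{2\})$ in the range $[\alpha,\beta)$ forces $U$ to remain Suslin. For this, the bookkeeping rule gives that $\dot T_\gamma$ has no Suslin subtree in $V[G_\gamma]$, while by the inductive hypothesis $U$ is Suslin in $V[G_\gamma]$; Theorem 11.4 then provides $\Vdash_U \dot T_\gamma\text{ is Aronszajn}$, and Theorem 8.1 applied in $V[G_\gamma]$ yields that $\p(\dot T_\gamma,\{2\})$ preserves the Suslinity of $U$. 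In particular $S$ is still Suslin in $V[G_{\omega_2}]$, so the final model contains a Suslin tree.

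To conclude, let $T$ be an $\omega_1$-tree in $V[G_{\omega_2}]$ with no Suslin subtree. Non-Aronszajn trees are non-Lindel\"of by \cite[Lemma 4.2]{marun}, and by Lemma 11.5 it suffices to show that every normal infinitely splitting Aronszajn tree without a Suslin subtree contains an uncountable downwards closed $2$-splitting subtree. Assuming $T$ is such, I reflect it down to some $V[G_\alpha]$ via the $\omega_2$-c.c.; the preservation induction above shows that $T$ cannot contain a Suslin subtree in $V[G_\alpha]$ either, so the bookkeeping arranges $\dot\q_\beta = \p(T,\{2\})$ for some $\beta\ge\alpha$, producing the desired $2$-splitting subtree of $T$, which witnesses non-Lindel\"ofness via Theorem 2.7. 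The principal obstacle will be the Suslin-preservation induction: it requires checking the Aronszajn hypothesis of Theorem 8.1 by way of Theorem 11.4, and exploits crucially that the choice $\mathcal S=\{2\}$ gives $s=2$, matching both hypotheses.
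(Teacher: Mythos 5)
Your proposal is correct and follows essentially the same route as the paper: iterate $\p(\cdot,\{2\})$ (the paper iterates $\p(\cdot,\{n\})$ for all $n$, but $n=2$ suffices here) over normal infinitely splitting Aronszajn trees without Suslin subtrees, preserve Suslin trees via Theorems 8.1, 11.4, and 11.3, and conclude with Lemma 11.5 and Theorem 2.7. You have in fact spelled out the Suslin-preservation induction and the reflection argument that the paper's proof leaves implicit.
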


\begin{proof}
	Start with a model satisfying \textsf{GCH} and there exists a Suslin tree. 
	Repeat the construction of Theorem 11.2 to build 
	$\langle \p_i, \dot \q_j : i \le \omega_2, \ j < \omega_2 \rangle$, 
	except that we obtain the conclusion only 
	for those normal Aronszajn trees which contain no Suslin subtree. 
	By Theorems 8.1 and 11.4, each forcing we iterate preserves all Suslin trees. 
	By Theorem 11.3, $\p_{\omega_2}$ preserves all Suslin trees, and in fact,  
	if $T$ is a normal Aronszajn tree which has no Suslin subtree 
	in the final model, then it does not have a Suslin subtree in any intermediate model. 
	Now repeat the same argument as in the end of the proof of 
	Theorem 11.2, but using Lemma 11.5 instead of Lemma 11.1
\end{proof}

Finally, we show that we can consistently separate different degrees of subtree splitting.

\begin{thm}
	Let $2 < n \le \omega$. 
	It is consistent that \textsf{CH} holds, 
	every normal $(\ge \! n)$-splitting Aronszajn tree contains an 
	uncountable downwards closed $n$-splitting normal subtree, and 
	there exists a normal Aronszajn tree which is infinitely splitting 
	but does not contain an uncountable downwards closed 
	$(< \! n)$-splitting subtree.
\end{thm}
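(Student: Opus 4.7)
The plan is to combine a fixed ground-model witness $T^*$ for the second clause with a countable support forcing iteration of the forcings $\p(T,\{n\})$ from Section 6 in order to force the first clause.

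I would begin by working in a ground model satisfying $\Diamond$. Under $\Diamond$ there is a normal Suslin tree, and by Lemma 1.1(b) we may restrict it to a club level set to obtain a normal, infinitely splitting Suslin tree $T^*$ (the Suslin and normal properties are preserved by the restriction). Since infinitely splitting Suslin trees are Lindel\"{o}f, Theorem 2.7 implies that $T^*$ contains no uncountable downwards closed finitely splitting subtree, and hence no uncountable downwards closed $(<\! n)$-splitting subtree. In the degenerate case $n = \omega$, the first clause is automatic (any infinitely splitting tree is an $\omega$-splitting subtree of itself) and $T^*$ already witnesses the second clause in $V$; so we may assume henceforth that $2 < n < \omega$.

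Next I would define a countable support forcing iteration $\langle \p_i, \dot\q_j : i \le \omega_2, \ j < \omega_2 \rangle$ with bookkeeping so that every $\p_{\omega_2}$-name $\dot T$ for a normal $(\ge\! n)$-splitting Aronszajn tree is handled at some stage $i$ via $\Vdash_i \dot\q_i = \p(\dot T,\{n\})$, exactly as in the proof of Theorem 11.2. By Corollary 6.6 and Propositions 10.4, 10.6, and 10.8, each iterand is totally proper, $(<\! \omega_1)$-proper, Dee-complete, and has the $\omega_2$-p.i.c.; Theorems 10.7 and 10.3 then give that $\p_{\omega_2}$ adds no reals (so $\textsf{CH}$ is preserved at every stage) and is $\omega_2$-c.c., while properness preserves $\omega_1$. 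The central preservation step uses Theorems 7.1 and 3.1: since $s = \min\{n\} = n$, Theorem 7.1 shows that each iterand $\p(T,\{n\})$ forces that $T^*$ has no uncountable downwards closed $(<\! n)$-splitting subtree, and Theorem 3.1 then lifts this property to the full countable support iteration $\p_{\omega_2}$. In $V^{\p_{\omega_2}}$ the tree $T^*$ therefore retains this property; as $n > 2$ it remains Aronszajn (since a $(<\! 2)$-splitting uncountable downwards closed subtree would be an uncountable branch), and being a fixed set of the ground model it remains normal and infinitely splitting by absoluteness of these structural properties.

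In the final model $\textsf{CH}$ holds, every normal $(\ge\! n)$-splitting Aronszajn tree contains an uncountable downwards closed $n$-splitting normal subtree by construction and bookkeeping, and $T^*$ witnesses the second clause. The main obstacle is ensuring that the bookkeeping is legitimate: that every normal $(\ge\! n)$-splitting Aronszajn tree in the eventual extension really is addressed at some stage of the iteration. This is the standard argument indicated in the proof of Theorem 11.2, using that $\p_{\omega_2}$ is $\omega_2$-c.c., $\textsf{CH}$ persists in each intermediate model, and hence each $\p_i$ has cardinality at most $\omega_2$ and admits at most $\omega_2$-many nice names for such trees, which may be enumerated and interleaved along the length-$\omega_2$ iteration.
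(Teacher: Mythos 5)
Your proposal is correct and follows essentially the same route as the paper's proof: fix a normal infinitely splitting Suslin tree in the ground model (which has no uncountable downwards closed finitely splitting subtree, hence no $(<\!n)$-splitting one), run the countable support iteration of the posets $\p(T,\{n\})$ with bookkeeping exactly as in Theorem 11.2, and combine Theorem 7.1 (noting $s = n$) with the iteration theorem 3.1 to preserve the second clause. One small point: your bookkeeping argument needs $2^{\omega_1} = \omega_2$ in the ground model so that each $\p_i$ and the collection of relevant nice names have size at most $\omega_2$, and $\Diamond$ alone does not supply this, so you should assume \textsf{GCH} as the paper does; on the other hand, your separate treatment of the degenerate case $n = \omega$ (where $\p(T,\{\omega\})$ is not even defined, since $\mathcal S \subseteq \omega \setminus 2$) is a welcome extra precision that the paper's proof glosses over.
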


\begin{proof}
	Start with a model satisfying \textsf{GCH} and there exists a normal 
	infinitely splitting Suslin tree $S$. 
	By \cite[Theorem 4.12]{marun}, $S$ does not contain an uncountable downwards 
	closed finitely splitting subtree. 
	In particular, $S$ does not contain an uncountable downwards closed 
	$(< \! n)$-splitting subtree. 
	Now define a countable support forcing iteration 
	$\langle \p_i, \dot \q_j : i \le \omega_2, \ j < \omega_2 \rangle$ as 
	in the proof of Theorem 11.2, except that we only force with 
	$\p(T,\{n\})$ for normal Aronszajn trees $T$ which are $(\ge \! n)$-splitting. 
	By Theorem 7.1, each such forcing preserves the fact that $S$ does not 
	contain an uncountable downwards closed 
	$(< \! n)$-splitting subtree, so by Theorem 3.1 so does $\p_{\omega_2}$.
\end{proof}

\textbf{Acknowledgements:} 
The author thanks Pedro Marun for providing 
helpful comments and corrections on an earlier draft of the article, and the referee
for writing a careful report.

\bigskip

\textbf{Funding:} The author acknowledges support from the Simons Foundation under 
the Travel Support for Mathematicians gift 631279.

\providecommand{\bysame}{\leavevmode\hbox to3em{\hrulefill}\thinspace}
\providecommand{\MR}{\relax\ifhmode\unskip\space\fi MR }
\providecommand{\MRhref}[2]{%
  \href{http://www.ams.org/mathscinet-getitem?mr=#1}{#2}
}
\providecommand{\href}[2]{#2}


\end{document}